\newtheorem{theor}{theorem}[section]
\newtheorem{theorem}[theor]{Theorem}
\newtheorem{defi}[theor]{Definition}
\newtheorem{defis}[theor]{Definitions}
\newtheorem{prop}[theor]{Proposition}
\newtheorem{corollary}[theor]{Corollary}
\newtheorem{remark}[theor]{Remark}
\newtheorem{lemma}[theor]{Lemma}
\newtheorem{notation}[theor]{Notation}
\newcommand{\N}{\mathbf{N}}
\newcommand{\Z}{\mathbf{Z}}
\newcommand{\R}{\mathbf{R}}
\newcommand{\f}{\rightarrow}
\newcommand{\id}{\operatorname{id}}
\newcommand{\Ker}{\operatorname{Ker}}
\newcommand{\Vol}{\operatorname{Vol}}
\newcommand{\Ent}{\operatorname{Ent}}
\newcommand{\Min}{\operatorname{Min}}
\newcommand{\Max}{\operatorname{Max}}
\newcommand{\diam}{\operatorname{diam}}
\newcommand{\g}{\operatorname{\gamma}}
\newcommand{\e}{\operatorname{\varepsilon}}
\title{Finiteness Theorems for Gromov-Hyperbolic Spaces and Groups}
\author{G.~Besson, G.~Courtois, S.~Gallot, A.~Sambusetti}
\begin{document}

\maketitle

\abstract{In this article we prove that the set of torsion-free groups acting by isometries on a hyperbolic metric space whose entropy is bounded above and with a compact quotient is finite. The number of such groups can be estimated in terms of the hyperbolicity constant and of an upper bound of the entropy of the space and of an upper bound of the diameter of its quotient. As a consequence we show that the set of non cyclic torsion-free $\delta$-hyperbolic marked groups  whose entropy is bounded above by a number $H$ is finite with cardinality depending on $\delta$ and $H$
 alone. From these results, we draw homotopical and topological finiteness theorems for compact metric spaces and manifolds.

\tableofcontents

\section{Introduction and definitions}
Finiteness results in Riemannian geometry have a long history since Weinstein's  and  Cheeger's celebrated theorems \cite{Wei}, \cite{Ch}.  The following one, which is close to the purpose of this paper, was stated  without proof by Gromov in \cite{Gr5}:

\begin{theorem}\label{thmgromov} There exist only finitely many compact, differentiable $n$-manifolds $Y$ admitting a Riemannian metric with sectional curvature $-a^2 \leq k(Y) <0$ and $diam(Y)\leq D$.
\end{theorem}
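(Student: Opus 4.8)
The plan is to reduce the differentiable finiteness of $Y$ to a finiteness statement for its fundamental group, exploiting that negatively curved manifolds are aspherical, and then to control that group through its entropy and the Gromov-hyperbolicity of its universal cover. First I would fix such a $Y$ and pass to its Riemannian universal cover $(\w{Y},\w{g})$. By the Cartan--Hadamard theorem $\w{Y}$ is diffeomorphic to $\R^n$, so $Y=\w{Y}/\Gamma$ is a $K(\Gamma,1)$, where $\Gamma:=\pi_1(Y)$ is torsion-free and acts freely, properly discontinuously and cocompactly by isometries on $\w{Y}$ with $\diam(\w{Y}/\Gamma)\le D$; moreover $\Gamma$ is a non-elementary word-hyperbolic group. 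Because $Y$ is aspherical, its homotopy type is determined by the isomorphism class of $\Gamma$, so the first and main goal is: \emph{only finitely many groups $\Gamma$ occur}.

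To feed a group-finiteness mechanism I would extract two invariants. From $k\ge -a^2$, Bishop--Gromov volume comparison yields $\Ent(\w{Y})\le (n-1)a=:H$, a bound depending only on $n$ and $a$; by a Milnor--Švarc argument (using $\diam\le D$) this also bounds the exponential growth rate of $\Gamma$. Secondly, since $\w{Y}$ has strictly negative curvature, it is $\delta$-hyperbolic. I would then seek to bound the number of elements of $\Gamma$ moving a base point $\w{y}_0$ into $B(\w{y}_0,2D)$, as these generate $\Gamma$ by Milnor's argument; together with the fact that relators can be taken of length controlled by $\delta$ and $D$ in a word-hyperbolic group, a uniform bound on the number of generators and relators would give finitely many presentations, hence finitely many isomorphism types of $\Gamma$, hence finitely many homotopy types of $Y$.

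The main obstacle is precisely this counting. With only $k<0$ the upper curvature bound is not uniform: each compact $Y$ satisfies $k\le -b(Y)^2<0$ for some $b(Y)>0$, but $b(Y)$ may tend to $0$ along the family, so $\delta$ may blow up and $\inj(Y)$ (equivalently $\sys(Y)$) may tend to $0$ --- the manifolds may collapse, and the naive packing of the orbit $\Gamma\cdot\w{y}_0$ in $B(\w{y}_0,2D)$ fails for lack of a uniform separation. The strategy I would follow is to let the entropy bound play the role of the missing upper curvature bound: $\Ent$ is the scale-covariant quantity that persists under collapse, and controlling the orbit count by $H$ together with the hyperbolicity of $\w{Y}$ --- rather than by a volume-packing argument --- is what should make the number of presentations finite in terms of $a$, $D$ and $n$ alone. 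Carrying out this count rigorously in the merely hyperbolic (as opposed to strictly pinched) setting is the heart of the matter and the step I expect to be hardest.

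Finally I would upgrade homotopy finiteness to differentiable finiteness. Asphericity already yields finitely many homotopy types; to obtain finitely many diffeomorphism types one must add geometric control, decomposing $Y$ into a thick part, where the two-sided bounds $-a^2\le k<0$ together with the injectivity radius lower bound defining thickness provide bounded geometry and a Cheeger-type harmonic-coordinate gluing, and a thin part whose topology is constrained along the finitely many isomorphism types of $\Gamma$. In dimension $2$ this last step is superfluous: $k\ge -a^2$ and $\diam\le D$ bound $\Vol(Y)$ via Bishop--Gromov, and Gauss--Bonnet then bounds the genus, giving the conclusion directly.
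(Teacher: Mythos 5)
First, a point of calibration: the paper does not prove Theorem \ref{thmgromov} at all. It is quoted from Gromov as classical background, with the homotopy version attributed to Heintze (who worked under the \emph{pinched} hypothesis $-a^2\le k\le -b^2<0$) and with the full statement referred to Cheeger--Peters finiteness or to Gromov's compactness theorem. So your proposal has to be judged against the known arguments, and against what the present paper's machinery can and cannot deliver.

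The step you yourself flag as ``the heart of the matter'' --- bounding the number of generators of $\Gamma$ using only $\Ent(\w Y)\le (n-1)a$, $\diam\le D$ and Gromov-hyperbolicity of $\w Y$ \emph{without} a uniform hyperbolicity constant --- is not carried out, and the strategy you propose for it cannot succeed at the level of generality at which you pose it. Section \ref{hypnecessary44} of the paper exhibits, via Dehn fillings of a cusped hyperbolic $3$-manifold, infinitely many pairwise non-isomorphic torsion-free groups acting properly, cocompactly, by isometries on Gromov-hyperbolic spaces with diameter of the quotient $\le 1$ and entropy $\le H$, but with hyperbolicity constants tending to infinity; this is precisely why every finiteness theorem in the paper carries the hypothesis of a fixed $\delta$. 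Hence entropy cannot ``play the role of the missing upper curvature bound'': once you have discarded the Riemannian structure and retained only (torsion-free group, proper cocompact action, bounded entropy, bounded diameter, Gromov-hyperbolic total space), the conclusion you need is false. The ingredient your outline is missing is the one that actually uses the manifold and the two-sided curvature bound: since $k<0$ one automatically has $|k|\le a^2$, so the Margulis lemma of Buser--Karcher applies, and (because $\pi_1$ of a closed negatively curved manifold is not virtually nilpotent) the thick part is nonempty; this yields a point with injectivity radius $\ge \e(n,a)$ and hence $\Vol(Y)\ge v(n,a)>0$. With $|k|\le a^2$, $\diam\le D$ and $\Vol\ge v$ in hand, Cheeger--Peters finiteness gives finitely many \emph{diffeomorphism} types in one stroke, which also makes your homotopy-first detour and the vague final paragraph (a ``Cheeger-type gluing'' on a thick part whose existence and size you never established) unnecessary. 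As it stands, your proposal identifies the right difficulty but proposes to resolve it by a mechanism that the paper's own counterexamples rule out, and leaves both the group-counting step and the passage from homotopy to diffeomorphism finiteness unproved.
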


More precisely, Gromov attributed the homotopy version of this theorem to E. Heintze. Actually, Heintze proved in his dissertation \cite{Hei2, Hei} an analogous of the famous Margulis' lemma  for Hadamard manifolds  with pinched sectional curvature $-a^2 \leq k(X) \leq -b^2<0$, then deducing the finiteness of homotopy types, from Cheeger's Finiteness theorem. However, Cheeger's  theorem (besides being originally stated for simply connected manifolds) proved only finiteness up to homeomorphisms in dimension 4.
To find a complete proof of Theorem \ref{thmgromov} in the literature it seems that we had to wait for Peters's proof of Cheeger's Finiteness theorem \cite{peters} or Gromov approach through his celebrated Compactness theorem (see \cite{Gr1}), as completed by Katsuda, Greene-Wu, Peters  and others.

The notion of \emph{$\delta$-hyperbolic space}\footnote{In this article, a $\delta$-hyperbolic  space will be always supposed to be a {\em proper, geodesic} space, see definition \ref{hypdefinition0}.} which is reminiscent of the above theorem
was introduced by M. Gromov as a very weak metric notion of negative 
curvature, at some macroscopic scale. For instance, for a metric  space, being $\delta$-hyperbolic is a much weaker hypothesis than being ${\rm CAT}(-1)$
(see Proposition 1.4.3 page 12 of \cite{CDP}). Furthermore, even if the metric space is a Riemannian manifold, being $\delta$-hyperbolic gives no information on the topology or on the geometry of balls of 
small radii.

We here present several finiteness results, the proof of which relies partly on a new \emph{Bishop-Gromov Inequality}.
It may indeed seem paradoxical that an hypothesis which generalizes the notion of \lq \lq curvature bounded from above" would provide a Bishop-Gromov inequality, 
while this inequality is usually the consequence of an hypothesis of the type \lq \lq (Ricci) curvature bounded from below". However, in the sequel as well as in our previous works (see \cite{BCGS}, \cite{BG2}), the hypothesis
\lq \lq (Ricci) curvature bounded from below" will be replaced by the much weaker assumption \lq \lq entropy bounded from above" (see Definition \ref{Entropies0}). We recall the definition of  the entropy of a metric measure space.

\begin{defi}\label{Entropies0}
The entropy of a metric measure space $ (X ,d, \mu) $ (denoted by $ \Ent (X ,d, \mu) $) is the lower limit (when $ R \to +\infty $)
of $ \dfrac{1}{R}   \ln \Big( \mu \big( B_X (x , R )\big) \Big) $. It does not depend on the choice of $ x $.
\end{defi}

This invariant, possibly infinite, gives an asymptotic, hence weak, information on the geometry of the metric space (see the comparison with other 
hypotheses in the section 3.3 of \cite{BCGS} and in the section 3.1 of \cite{BG2}). Despite its apparent weakness it becomes interesting
when there exists a group $ \Gamma $ acting properly by isometries on $ (X ,d) $ (and possibly co-compactly) and when we restrict ourselves to 
$\Gamma$-invariant measures $\mu$.
Among these $\Gamma$-invariant measures, our preferred example is the \emph{counting measure} $ \mu_{x}^{\Gamma} $ of the orbit $ \Gamma  x $ 
of a given point $ x $, defined by $ \mu^\Gamma_x = \sum_{\gamma \in \Gamma} \delta_{\gamma x}$, where $\delta_y$ is the Dirac measure at the point $y$.

Notice that, in the co-compact case, the entropy does not depend on the chosen $ \Gamma$-invariant measure, as shown by the

\begin{prop}\label{Entropies1} Let $ (X ,d) $ be a non compact metric space and $ \Gamma $ be a group acting properly and co-compactly on $ (X ,d) $ 
by isometries. For every positive $\Gamma$-invariant measure $\mu $ on $ X $ such that all balls have finite measure, one has $\Ent (X ,d, \mu) = 
\Ent (X ,d, \mu_{x}^{\Gamma}) $ for every $x \in X$.\\
If, furthermore, $ (X ,d) $ is a length space, then $ \Ent (X ,d, \mu) $ is actually a limit.
\end{prop}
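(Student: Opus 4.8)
The plan is to compare the two growth functions $R \mapsto \mu\big(B_X(x,R)\big)$ and $R \mapsto N(x,R):=\mu^\Gamma_x\big(B_X(x,R)\big)=\#\{\gamma\in\Gamma:\,d(x,\gamma x)\le R\}$, and to show that each is squeezed between fixed multiples of translates of the other. Since a bounded multiplicative constant and a bounded shift of the radius both disappear after applying $\frac1R\ln(\cdot)$ and letting $R\to\infty$, this forces the two lower limits to coincide. Two features of the action are used throughout. First, co-compactness furnishes $D:=\diam(\Gamma\backslash X)<\infty$ for which the orbit $\Gamma x$ is $D$-dense, i.e. every $y\in X$ satisfies $d(y,\gamma x)\le D$ for some $\gamma$. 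Second, properness makes the covering $\{B_X(\gamma x,\rho)\}_{\gamma\in\Gamma}$ locally finite with a uniform multiplicity bound. Note that this first part needs neither a length-space nor a geodesic hypothesis.

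For $\Ent(X,d,\mu)\le\Ent(X,d,\mu^\Gamma_x)$ I would cover $B_X(x,R)$ by those balls $B_X(\gamma x,D)$ whose centres lie in $B_X(x,R+D)$; $\Gamma$-invariance gives $\mu\big(B_X(\gamma x,D)\big)=\mu\big(B_X(x,D)\big)$, whence $\mu\big(B_X(x,R)\big)\le N(x,R+D)\,\mu\big(B_X(x,D)\big)$. For the reverse inequality I would first fix $\rho$ with $\mu\big(B_X(x,\rho)\big)>0$, which exists because $\mu\neq0$ and, by $D$-density, a translate of a ball of positive measure lies inside $B_X(x,\rho)$ once $\rho$ is large. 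Properness bounds by $C:=N(x,2\rho)$ the number of balls $B_X(\gamma x,\rho)$ containing a given point: if $z\in B_X(\gamma_1 x,\rho)\cap B_X(\gamma_2 x,\rho)$ then $\gamma_1^{-1}\gamma_2$ lies in the set of the $N(x,2\rho)$ elements moving $x$ by at most $2\rho$. Integrating $\sum_{\gamma}\mathbf 1_{B_X(\gamma x,\rho)}$ over $B_X(x,R+\rho)$ then yields $N(x,R)\,\mu\big(B_X(x,\rho)\big)\le C\,\mu\big(B_X(x,R+\rho)\big)$. Applying $\frac1R\ln$ and the lower limit to these two displayed estimates gives the asserted equality, for every $x$.

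The final assertion uses the length-space hypothesis and is proved by a subadditivity (Fekete) argument applied to $g(R):=\ln N(x,R)$. In a length space, whenever $d(x,\gamma x)\le R+S$ one can pick a point $m$ on a near-geodesic from $x$ to $\gamma x$ with $d(x,m)\le R$ and $d(m,\gamma x)\le S+D$; replacing $m$ by a $D$-close orbit point $\alpha x$ and writing $\gamma=\alpha\,(\alpha^{-1}\gamma)$ produces an injection into the pairs counted by $N(x,R+D)\cdot N(x,S+2D)$, so $N(x,R+S)\le N(x,R+D)\,N(x,S+2D)$. Since $N$ is non-decreasing, the same inequality bounds the increment $g(R+D)-g(R)$ by a constant for large $R$, and one deduces $g(R+S)\le g(R)+g(S)+C'$ for some constant $C'$ and all large $R,S$. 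Thus $R\mapsto g(R)+C'$ is subadditive for large arguments and $\lim_{R\to\infty}\frac1R g(R)$ exists by Fekete's lemma; the two-sided estimate of the first part then sandwiches $\mu\big(B_X(x,R)\big)$ between fixed multiples of $N(x,R\pm\text{const})$, so $\frac1R\ln\mu\big(B_X(x,R)\big)$ converges to the same value.

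I expect the only delicate point to be the bookkeeping of the additive shifts in the radius: every covering or near-geodesic step costs an error of order $D$, and the crux is to verify that these $O(D)$ corrections, which are individually negligible after division by $R$, do not accumulate into the leading term — this is most subtle in the subadditivity step, where one must carefully absorb the shifts $R+D$ and $S+2D$ into the single constant $C'$ before invoking Fekete's lemma.
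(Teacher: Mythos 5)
Your proof is correct: the two-sided comparison between $\mu\big(B_X(x,R)\big)$ and the orbital counting function $\mu^\Gamma_x\big(B_X(x,R)\big)$ (orbit $D$-density for one inequality, a bounded-multiplicity covering via properness for the other), followed by the sub-multiplicativity/Fekete argument in the length-space case, is exactly the classical argument. Note that the paper itself gives no proof of this proposition — it defers to \cite{BCGS}, Proposition 3.3 — so there is nothing to compare against beyond confirming that your route is the standard one and that every step (in particular the bound $g(R+D)-g(R)\le \ln N(x,4D)$ needed to absorb the radius shifts before invoking Fekete) checks out.
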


This proposition is classical, a proof may be found in \cite{BCGS}, Proposition 3.3. It is the reason why we shall sometimes use the notation $ \Ent (X ,d) $ instead of $ \Ent (X ,d, \mu) $ for $\Gamma$-invariant measures.

Let us now consider a finitely generated group $\Gamma$ and let $\Sigma$ be a finite and symmetric generating set (see Definition \ref{defi:Cayley}). The couple $(\Gamma, \Sigma)$ is here called a \emph{marked group} and from this data we can define a proper metric space, the so-called \emph{Cayley graph of $(\Gamma, \Sigma)$}, denoted by $ {\cal G} (\Gamma , \Sigma) $ and endowed with a natural distance such that the edges have length one (see the beginning of next Section for a precise definition). Notice that, by construction, $\Gamma$ acts isometrically and co-compactly on $ {\cal G} (\Gamma , \Sigma)$. We can then define the entropy of this metric space endowed with the counting measure and we shall denote it by $\Ent (\Gamma , \Sigma)$. When, for a finitely generated group $\Gamma$, there exists a finite generating set $\Sigma$ so that the metric space $ {\cal G} (\Gamma , \Sigma)$ is $\delta$-hyperbolic for some $\delta$ we say that $\Gamma$ is hyperbolic in the sense of Gromov and that $(\Gamma , \Sigma)$ is $\delta$-hyperbolic. We recall that the number $\delta$ depends on $\Sigma$.

One of our main results is the following theorem (see Theorem \ref{Nbgroupeshyp} and Section \ref{finiteisomorph} for the definition of an isometry between marked groups).
\begin{theorem}\label{MainIntro} Given any $\delta, H > 0$, the set of non cyclic torsion-free $\delta$-hyperbolic marked groups
$(\Gamma , \Sigma)$ satisfying $\Ent (\Gamma , \Sigma) \le H$ is finite and has cardinality (up to isometric isomorphism) smaller than a number depending on $\delta$ and $H$ only.
\end{theorem}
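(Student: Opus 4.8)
The plan is to reduce the finiteness of $\delta$-hyperbolic marked groups to the finiteness of torsion-free groups acting by isometries with compact quotient on hyperbolic spaces, which is the first main result announced in the abstract. Given a marked group $(\Gamma,\Sigma)$ that is $\delta$-hyperbolic and torsion-free with $\Ent(\Gamma,\Sigma)\le H$, the natural object to act upon is its Cayley graph $\mathcal{G}(\Gamma,\Sigma)$: the group $\Gamma$ acts on it by isometries, properly (indeed freely, since $\Gamma$ acts freely on the vertices by left multiplication) and co-compactly (the quotient is a single vertex with its incident edges, so its diameter is at most $1$). Thus the data $(\Gamma,\Sigma)$ falls exactly under the hypotheses of the general finiteness theorem, with hyperbolicity constant $\delta$, entropy bound $H$, and diameter bound $D=1$ all controlled purely by $\delta$ and $H$.

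The key steps, in order, are the following. First I would make precise the notion of \emph{isometric isomorphism} of marked groups: an isomorphism $\varphi\colon\Gamma\to\Gamma'$ carrying $\Sigma$ bijectively to $\Sigma'$, equivalently an isometry of Cayley graphs that is equivariant with respect to an abstract group isomorphism. This is the equivalence relation counted in the statement, and it is precisely the relation under which two co-compact isometric actions coincide. Second, I would invoke the general theorem to produce a finite list, of cardinality $N=N(\delta,H)$, of the possible groups $\Gamma$ (up to the appropriate equivalence) arising as such co-compact isometry groups. Third, I must bound the number of admissible generating sets $\Sigma$ for each such $\Gamma$: here the crucial point is that the entropy bound $\Ent(\Gamma,\Sigma)\le H$ together with $\delta$-hyperbolicity forces the cardinality $|\Sigma|$ to be bounded by a function of $\delta$ and $H$, so that only finitely many markings of a given group can occur, and the total count stays finite and estimable in terms of $\delta$ and $H$ alone.

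The main obstacle I anticipate lies in this third step, namely controlling the \emph{number of markings} rather than the number of abstract groups. Two difficulties combine here. First, one must genuinely bound $|\Sigma|$: a priori a group could be $\delta$-hyperbolic with the same $\delta$ for arbitrarily large generating sets, and the entropy bound must be shown to preclude this, presumably by relating the growth of $\Gamma$ with respect to $\Sigma$ to the word metric and extracting a bound on the number of elements at distance $1$ (that is, on $|\Sigma|$) from the entropy and the hyperbolicity constant. Second, even once $|\Sigma|$ is bounded, distinct generating sets of bounded size can yield non-isometric marked groups, so one must ensure the bound on $|\Sigma|$ yields only finitely many $\Sigma$ up to the isomorphism action, which follows once $\Gamma$ itself ranges over a finite list. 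The hypothesis \emph{non cyclic} is needed to exclude the degenerate family $\mathbf{Z}$ with its infinitely many markings $\Sigma=\{\pm n\}$, all of which are $\delta$-hyperbolic with small $\delta$ and of zero entropy; excluding cyclic groups removes exactly this source of infinitude and is what makes the clean cardinality estimate possible.
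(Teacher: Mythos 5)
Your overall reduction to the Cayley graph action (proper, co-compact, quotient of diameter $1$) is the right starting point, and you correctly identify the bound on $\#\Sigma$ as a central difficulty; this is what the paper achieves in Proposition \ref{Nbgenerators}, via the Bishop--Gromov inequality for $\delta$-hyperbolic spaces, the Breuillard--Green--Tao structure theorem, and the lower bound on the asymptotic displacement. However, your third step contains a genuine gap. You claim that once $\Gamma$ ranges over a finite list of abstract groups and $\#\Sigma$ is bounded, there are only finitely many markings up to isometric isomorphism. This is false: a single fixed group with a generating set of fixed cardinality can carry infinitely many pairwise non-isometric markings. The paper itself exhibits exactly this in Section \ref{sec:exa}: the marked groups $(\Z\ast\Z,\, S_0\sqcup S_{p,q})$ with $S_{p,q}=\{\pm p,\pm q\}$ for coprime $p,q$ all have six generators, are torsion-free, non-cyclic, of entropy at most $\ln 11$, and are pairwise non-isometric. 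What fails there is only the uniform bound on the hyperbolicity constant, and that is precisely the ingredient your counting step never uses. The missing idea is that a $\delta$-hyperbolic marked group admits a presentation $\langle\Sigma\,|\,\mathcal R\rangle$ all of whose relators have word-length at most $4\delta+6$ (a Dehn-type presentation), and that the number of presentations with $\#\Sigma\le N$ and relators of length $\le p$ is bounded by $\sum_{k=0}^{N}2^{(2k)^{p}}$; since the isometry class of the marked group is determined by such a presentation, this is what converts the bound on $\#\Sigma$ into a bound on the number of marked groups (Lemma \ref{presentations} and Proposition \ref{Nbgroupes}).

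A secondary structural problem: your second step invokes the finiteness theorem for torsion-free groups acting co-compactly on $\delta$-hyperbolic spaces, but in the paper that theorem (Theorem \ref{boundongroups}) is itself \emph{deduced from} the marked-group theorem you are trying to prove, by showing that $\Gamma$ equipped with $\Sigma_{10D}(x)$ is $\delta'$-hyperbolic for an explicit $\delta'$. As written, your reduction is therefore circular, and in any case it is unnecessary: once $\#\Sigma$ is bounded and the bounded-length presentation is in hand, the finiteness of marked groups follows directly, and the finiteness of abstract groups is a corollary rather than an input. The logical order should be reversed.
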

This number is made precise in Section \ref{sec:finiteness}. The number of finitely generated groups is infinite. Even if we bound the cardinality of $\Sigma$ it is impossible to bound the number of marked groups $(\Gamma, \Sigma)$. We need an extra piece of information and it is the size of the relations. More precisely, for a \emph{finitely presented} group $\Gamma$, the normal subgroup (of the free group $\mathbf F (\Sigma)$) generated by the relations is finitely generated (as a normal subgroup) and, if we call $\mathcal R$ one of its generating sets.  $\langle \Sigma | {\mathcal R} \rangle$ is called a \emph{presentation of $\Gamma$ by generators and relations}, both $\Sigma$ and $\mathcal R$ being finite (see Section \ref{finiteisomorph} for more details). Notice that every element $r\in {\mathcal R}$ is a word in the elements of $\Sigma$. Now an easy lemma (see Lemma \ref{presentations}) asserts that, if we consider finitely presented groups which admit a presentation $\langle \Sigma | {\mathcal R} \rangle$  such that the cardinality of $\Sigma$ is bounded above by $N\in\textbf{N}$, and if the word-length of any element of $\mathcal R$ is bounded above by $p\in\textbf{N}$, then the number of such groups is bounded above by an explicit function of $p$ and $N$.

Now a remarkable, yet easy, fact is that  every $\delta$-hyperbolic marked group $(\Gamma ,  \Sigma)$ admits a presentation $\langle \Sigma | {\mathcal R} \rangle$ by generators and relations such that the word-length of every relation $r \in {\mathcal R}$ is bounded from above by $4 \delta + 6$ (see for example \cite{BH}, chapter III.$\Gamma$, proof of Proposition 2.2). We then only need to find a good generating set $\Sigma$ for which we can control the cardinality in terms of the data $\delta$ and $H$. This is given by the following proposition (see Proposition \ref{Nbgenerators}).

\begin{prop}\label{intro:Nbgenerators}
Given $\delta, H, D > 0$, let $\Gamma$ be a non-cyclic torsion-free group acting properly and by isometries on a $\delta$-hyperbolic space $(X,d)$ verifying $\Ent (X, d) \le H$ 
and $\diam (\Gamma \backslash X) \le D$  then, for every $x \in X$, the set $\Sigma_{10 (D + \delta)}(x):=\{ \gamma \in \Gamma  : d(x , \gamma x) \le 10 (D + \delta)\}$ is a symmetric generating set
whose cardinality is bounded above by a function of $\delta$, $H$ and $D$ only. 
\end{prop}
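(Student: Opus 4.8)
The plan is to treat separately the two assertions, namely that $\Sigma:=\Sigma_{10(D+\delta)}(x)$ generates $\Gamma$ and that $\#\Sigma$ is bounded by a function of $\delta,H,D$ alone; the first is elementary and essentially all the difficulty lies in the second. Symmetry of each $\Sigma_R(x)$ is immediate: if $d(x,\gamma x)\le R$ then, $\gamma$ being an isometry, $d(x,\gamma^{-1}x)=d(\gamma x,x)=d(x,\gamma x)\le R$, so $\gamma^{-1}\in\Sigma_R(x)$. For the generating property I would run the classical Milnor--\v{S}varc argument. Given $\gamma\in\Gamma$, choose a geodesic from $x$ to $\gamma x$ and mark points $x=y_0,y_1,\dots,y_n=\gamma x$ on it with $d(y_i,y_{i+1})\le 2D$; this is possible since the segment has finite length. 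Since $\diam(\Gamma\backslash X)\le D$, for each $i$ there is $g_i\in\Gamma$ with $d(y_i,g_ix)\le D$, and one may take $g_0=\id$, $g_n=\gamma$. Then
\[
 d\big(x,\,g_i^{-1}g_{i+1}x\big)=d(g_ix,g_{i+1}x)\le d(g_ix,y_i)+d(y_i,y_{i+1})+d(y_{i+1},g_{i+1}x)\le 4D,
\]
so each $g_i^{-1}g_{i+1}$ lies in $\Sigma_{4D}(x)\subseteq\Sigma$, and $\gamma=(g_0^{-1}g_1)(g_1^{-1}g_2)\cdots(g_{n-1}^{-1}g_n)$ exhibits $\gamma$ as a word in $\Sigma$. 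Thus generation already holds with radius $4D$, hence a fortiori with the larger radius $10(D+\delta)$ of the statement.

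For the cardinality I would first note that the action is free: being proper, the stabiliser of $x$ is finite, and being torsion-free it is trivial, so $\gamma\mapsto\gamma x$ is injective. Consequently, writing $R=10(D+\delta)$,
\[
 \#\Sigma_R(x)=\#\big(\Gamma x\cap \overline{B}(x,R)\big)=\mu^\Gamma_x\big(\overline{B}(x,R)\big),
\]
so it suffices to bound the counting measure of a ball of \emph{fixed finite} radius. The subtlety is that the entropy only controls the growth of $\mu^\Gamma_x(\overline{B}(x,R))$ as $R\to+\infty$, and this asymptotic information does not by itself bound the count at a fixed scale. To bridge the gap I would invoke the Bishop--Gromov inequality announced in the introduction for the $\Gamma$-invariant counting measure (noting that, since $\diam(\Gamma\backslash X)\le D$ and $X$ is proper, the action is co-compact, so by Proposition \ref{Entropies1} the entropy of $\mu^\Gamma_x$ equals $\Ent(X,d)\le H$). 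That inequality makes a ratio $\mu^\Gamma_x(\overline{B}(x,R))/V_H(R)$ monotone for a model function $V_H$ of exponential growth governed by $H$, giving, for every $0<r\le R$,
\[
 \mu^\Gamma_x\big(\overline{B}(x,R)\big)\le \frac{V_H(R)}{V_H(r)}\,\mu^\Gamma_x\big(\overline{B}(x,r)\big).
\]
Choosing $r$ strictly smaller than the systole $\sys(x)=\min_{\gamma\neq\id}d(x,\gamma x)$ forces $\mu^\Gamma_x(\overline{B}(x,r))=1$, whence $\#\Sigma_R(x)\le V_H(R)/V_H(r)$.

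The remaining, and genuinely hard, step is to bound this last quantity by a function of $\delta,H,D$ alone, for which I need a positive \emph{lower} bound on $\sys(x)$ depending only on $\delta$ and $H$. This is a Margulis-type phenomenon, and it is precisely here that the hypotheses \emph{non-cyclic} and \emph{torsion-free} must enter: a group generated by elements of very small displacement in a $\delta$-hyperbolic space of entropy $\le H$ should be forced into a degenerate, essentially elementary situation, i.e.\ virtually cyclic, hence (being torsion-free) infinite cyclic, contradicting non-cyclicity. I would therefore establish, through such a dichotomy, a universal lower bound $\sys(x)\ge s(\delta,H)>0$; taking $r=s(\delta,H)/2$ above then yields $\#\Sigma\le V_H\big(10(D+\delta)\big)/V_H\big(s(\delta,H)/2\big)$, a bound depending on $\delta,H,D$ only. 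Isolating the correct quantitative Margulis lemma, and verifying that the case it excludes is exactly the cyclic one, is the step I expect to demand the most care; everything else is either routine metric geometry or a direct application of the entropy–Bishop--Gromov machinery.
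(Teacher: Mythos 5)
Your handling of symmetry and of the generating property is correct and matches the paper (which invokes Proposition \ref{generatorsdiameter}), and you have rightly identified that a quantitative Margulis-type lower bound on displacements must enter. The gap is in the middle step. The Bishop--Gromov inequality actually available here (Theorem \ref{cocompact2}) is only valid for radii $R > r \ge 10(D+\delta)$; it gives no comparison between a ball of radius $10(D+\delta)$ and a ball of radius below the systole, and no two-scale monotonicity ``for every $0<r\le R$'' can be extracted from $\delta$-hyperbolicity plus an entropy bound, precisely because these hypotheses say nothing about small scales (the remark following Theorem \ref{cocompact2} points out that $\delta$ and $D$ enter exactly through the minimal admissible radius). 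Since $\mu^{\Gamma}_x\big(\overline{B}(x,r)\big)=1$ for $r$ below the systole, the inequality you postulate at that scale is essentially equivalent to the conclusion you are trying to prove, so it cannot serve as an input; entropy is an asymptotic invariant and by itself is consistent with arbitrarily many orbit points at the fixed scale $10(D+\delta)$ unless some additional structural mechanism intervenes.

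That mechanism, in the paper, is the Breuillard--Green--Tao theorem (Theorem \ref{BGTbasic}). The actual proof of Proposition \ref{Nbgenerators} takes $A=\Sigma_{10(D+\delta)}(x)$ and uses Theorem \ref{cocompact2} at the admissible scales $r=10(D+\delta)$ and $R=2r$ to bound the doubling ratio $\#(A\cdot A)/\#A$ by $3^5e^{72H(D+\delta)}$; Theorem \ref{BGTbasic} then covers $A$ by at most $\nu(K)$ cosets $\g_i\cdot G_0$ of a virtually nilpotent subgroup, which is virtually cyclic (by cocompactness of the action on a hyperbolic space) and hence, by torsion-freeness, infinite cyclic with generator $\sigma$. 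Finally, the lower bound on the asymptotic displacement $\ell(\sigma)$ from Theorem \ref{theo:minordeplacement} (which depends on $D$ as well as on $\delta$ and $H$, not on $\delta,H$ alone as you assert) bounds the number of powers $\sigma^k$ with $d(x,\sigma^k x)\le 2\cdot 10(D+\delta)$, hence the number of elements of $A$ in each coset. This covering-by-cosets step is the missing idea: without it, your systole bound cannot be converted into a cardinality bound, and the proposal as written does not close.
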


Here the function depending on $\delta$, $H$ and $D$ only is described in Subsection \ref{subsec:finitenessaction}. This yields Theorem \ref{MainIntro} since a $\delta$-hyperbolic marked group acts on its Cayley graph co-compactly and the quotient has diameter one.

The above Proposition, when $(X,d)$ is the Cayley graph of a given $\delta$-hyperbolic marked group $(\Gamma, \Sigma)$, might be compared with a well-known  estimate, due to Arzhantseva and Lysenok \cite{Arz}: the cardinality of any other generating set $\Sigma'$ of $\Gamma$ or of any non-elementary subgroup $\Gamma'\subset\Gamma$  can be bounded in terms of its entropy
$$\# \Sigma' \leq c \cdot \Ent(\Gamma, \Sigma').$$
The main difference with our result is that Arzhantseva-Lysenok's constant $c$ depends on the given hyperbolic marked group $(\Gamma, \Sigma)$ (namely, not only on its hyperbolicity constant $\delta$, but also on the cardinality of $\Sigma$). Our estimate is universal and bounds, by the same constant,  the cardinality of $\Sigma$ for every $\delta$-hyperbolic group $(\Gamma, \Sigma)$ with entropy less than $H$. Notice that our result does not cover Azhantseva and Lysenok's.

Another related result is given in the recent article by K.~Fujiwara and Z.~Sela, \cite{FS}. In this paper the authors consider a Gromov-hyperbolic group $\Gamma$ and sequences of generating sets $\Sigma_n$ of fixed cardinality and bounded entropy. It is shown that, when it converges, the sequence $\Ent (\Gamma, \Sigma_n)$ is non decreasing (except for a finite number of indices $n$) and they show that the limit entropy is the entropy of some marked group. The results in \cite{Arz}, in \cite{FS}  and the ones in the present article give a fairly complete panorama of the situation.

Notice that Proposition \ref{intro:Nbgenerators} gives also a finiteness theorem for groups acting on a $\delta$-hyperbolic space. More precisely we also show the following result (see Theorem \ref{boundongroups}).

\begin{theorem}\label{intro:boundongroups}
Given $\delta, H, D > 0$, let ${\cal H} (\delta, H, D)$ be the set of torsion-free groups which admit a proper isometric action on some $\delta$-hyperbolic space 
$(X,d)$ satisfying $\Ent (X , d) \le H$ and $\diam (\Gamma \backslash X) \le D$. Then, ${\cal H} (\delta, H, D)$ has cardinality (up to isomorphisms) 
bounded above by a function of $\delta$, $H$ and $D$ only.
\end{theorem}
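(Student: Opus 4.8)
The plan is to reduce the abstract counting problem to a combinatorial enumeration of group presentations, by attaching to each $\Gamma \in {\cal H}(\delta, H, D)$ a \emph{marked} group $(\Gamma, \Sigma)$ whose presentation complexity is controlled by $\delta, H, D$ alone. First I would dispose of the cyclic case: a torsion-free cyclic group is either trivial or isomorphic to $\Z$, so the cyclic members of ${\cal H}(\delta, H, D)$ contribute at most two isomorphism classes and may be set aside. For a \emph{non-cyclic} torsion-free $\Gamma \in {\cal H}(\delta, H, D)$, fix a proper isometric action on a $\delta$-hyperbolic space $(X,d)$ with $\Ent(X,d) \le H$ and $\diam(\Gamma\backslash X) \le D$, pick any basepoint $x \in X$, and set $\Sigma := \Sigma_{10(D+\delta)}(x)$. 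By Proposition \ref{intro:Nbgenerators}, $\Sigma$ is a finite symmetric generating set of $\Gamma$ with $\#\Sigma \le N(\delta, H, D)$.

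The crucial intermediate step is to control the hyperbolicity constant of the Cayley graph $\mathcal{G}(\Gamma, \Sigma)$ in terms of $\delta, H, D$ alone. Since $\diam(\Gamma\backslash X) \le D$, the action is cocompact, and the orbit map $\gamma \mapsto \gamma x$ is a quasi-isometry from $(\Gamma, d_\Sigma)$ to $(X,d)$, in the manner of the Milnor--{\v S}varc lemma. Its constants depend only on $\delta$ and $D$: the bound $d(\gamma x, \gamma' x) \le 10(D+\delta)\, d_\Sigma(\gamma, \gamma')$ is immediate from the triangle inequality, while the reverse inequality follows from cocompactness by cutting a geodesic of $X$ from $\gamma x$ to $\gamma' x$ into pieces of length at most $D$, replacing each division point by an orbit point at distance at most $D$ from it (possible since $\diam(\Gamma\backslash X)\le D$), and noting that two consecutive such orbit points differ by an element of $\Sigma_{10(D+\delta)}(x) = \Sigma$. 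Because hyperbolicity of geodesic spaces is a quasi-isometry invariant and both $\mathcal{G}(\Gamma, \Sigma)$ and $(X,d)$ are geodesic, $\mathcal{G}(\Gamma, \Sigma)$ is $\delta'$-hyperbolic for some $\delta' = \delta'(\delta, D)$. This is the step I expect to be the main obstacle: no ingredient is exotic, but one must follow every constant through both the Milnor--{\v S}varc estimate and the quasi-isometry invariance of hyperbolicity to be sure that $\delta'$ depends on the data alone.

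It then remains to count. Being a $\delta'$-hyperbolic marked group, $(\Gamma, \Sigma)$ admits a presentation $\langle \Sigma \mid \mathcal{R}\rangle$ in which every relation has word-length at most $4\delta' + 6$ (see \cite{BH}); since $\delta'$ depends only on $\delta, D$, the length bound $p := 4\delta'+6$ is a function of $\delta, D$ alone, while $\#\Sigma \le N(\delta, H, D)$. By Lemma \ref{presentations} the number of groups admitting a presentation with at most $N$ generators and relations of length at most $p$ is bounded by an explicit function of $N$ and $p$. Since each isomorphism class of non-cyclic members of ${\cal H}(\delta, H, D)$ arises from at least one such presentation, this explicit bound dominates their number; adding the at most two cyclic classes yields the desired bound depending on $\delta, H, D$ only.
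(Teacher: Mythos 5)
Your argument is correct and follows essentially the same route as the paper's: bound the cardinality of $\Sigma_{10(D+\delta)}(x)$ via Proposition \ref{Nbgenerators}, control the hyperbolicity constant $\delta'$ of the Cayley graph through an explicit Milnor--{\v S}varc quasi-isometry together with a quantitative version of the quasi-isometry invariance of hyperbolicity (the paper's Propositions \ref{hyphyp} and \ref{quasiisometric}, which give $\delta' = 16(\frac{\delta}{D}+2)$), and then count presentations with relators of word-length at most $4\delta'+6$ via Lemma \ref{presentations}. The only difference is organizational: the paper factors through Theorem \ref{Nbgroupeshyp} applied to the marked group $(\Gamma, \Sigma_{10D}(x))$, which requires the extra estimate $\Ent(\Gamma, \Sigma_{10D}(x)) \le 10HD$ and re-runs the generator count on the Cayley graph, whereas you bound the generating set directly from the action on $(X,d)$; both assemblies of the same ingredients are valid.
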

Comparing with the aforementioned finiteness Theorem \ref{thmgromov} for Riemannian manifolds that is, considering $G=\pi_1(Y)$, acting by deck transformations on the Riemannian universal covering $X$  of $Y$, we see that, besides the wider generality, in Theorem \ref{intro:boundongroups} the infinitesimal conditions given by the  upper and  lower  sectional  curvature bounds are replaced, respectively,  by the macroscopic condition of $\delta$-hyperbolicity of the universal covering, and by the asymptotic bound on the growth of $X$ given by the entropy (which is a much weaker condition, for instance, than a lower bound on the Ricci curvature). Going further in this comparison, one might ask if the $\delta$-hyperbolicity condition is really needed for Theorem \ref{intro:boundongroups}, or if it could be replaced by the Gromov hyperbolicity of $X$. We will show, in Section \ref{sec:exa}, that all the assumptions of Theorems \ref{MainIntro} and \ref{intro:boundongroups} are necessary, including the upper bound on the hyperbolicity constant.

The proof of Proposition \ref{intro:Nbgenerators} 
uses our Bishop--Gromov Inequality (see \cite{BCGS}, Theorem 5.1 and \ref{cocompact2}) as well as a result due to E.~Breuillard, B.~Green and T.~Tao (see \cite{BGT}, Corollary 1.7 and Theorem \ref{BGTbasic} in the sequel). 
The above results also rely on the key theorem stated below which shows that, in the situation under consideration, the asymptotic displacement of any hyperbolic isometry is bounded away from zero, by a positive universal constant only depending on $\delta$, $H$ and $D$. The asymptotic displacement $\ell (\g)$ of any hyperbolic isometry is defined in Definitions \ref{deplacements}. We then show the following (see Theorem \ref{theo:minordeplacement}):

\begin{theorem}\label{intro:minordeplacement}
Given $\delta , H , D > 0$, consider a $\delta$-hyperbolic metric space $(X , d) $ and a proper action by isometries of a non virtually cyclic group 
$\Gamma$ on $(X,d)$ such that the diameter of 
$\Gamma \backslash X$ and the entropy of $(X , d) $ are respectively bounded from above by $D$ and $H$, every torsion-free element $\g \in \Gamma^*$
verifies $\ell (\g) > \dfrac{2 (5 D + \delta)}{\nu \big(3^5 e^{73 H (D + 4 \delta)}\big) + 2}$.
\end{theorem}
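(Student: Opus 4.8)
The plan is to argue by contraposition and to read the asserted lower bound on $\ell(\gamma)$ as a \emph{translation-length gap}: if $\ell(\gamma)$ were too small, then the infinite cyclic group $\langle\gamma\rangle$ would crowd too many distinct orbit points into a ball of radius $\sim 5D+\delta$, and — because $\Gamma$ is \emph{not} virtually cyclic — this crowding would force the orbit-counting measure $\mu_x^\Gamma$ to grow faster than the Bishop--Gromov bound permits. The conceptual engine is the link between entropy and translation length: a tiny $\ell(\gamma)$ produces a densely sampled quasi-geodesic, and combined with a second, transverse direction (available precisely because $\Gamma$ is non-elementary) it manufactures exponential branching, hence large growth, contradicting $\Ent(X,d)\le H$. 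I would first note that this is also why the virtually cyclic case must be excluded: a $\mathbf Z$-action has zero entropy and arbitrarily small translation length, so no such bound can hold for it. Since the action is proper and cocompact on a proper geodesic $\delta$-hyperbolic space, every nontrivial torsion-free $\gamma$ is a \emph{hyperbolic} isometry, so $\ell(\gamma)>0$ and the statement is meaningful.

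First I would fix the scale. Choose a point $x$ lying within a universal multiple of $\delta$ of a quasi-axis of $\gamma$, so that the displacement estimates of Definitions~\ref{deplacements} give $d(x,\gamma^{k}x)\le |k|\,\ell(\gamma)+C\delta$ for all $k\in\mathbf Z$, with $C$ universal. Then, assuming for contradiction that $\ell(\gamma)\le \frac{2(5D+\delta)}{\nu(N)+2}$ with $N=3^{5}e^{73H(D+4\delta)}$, the $\nu(N)+2$ consecutive powers $\gamma^{0},\dots,\gamma^{\nu(N)+1}$ — pairwise distinct because $\gamma$ has infinite order and the action is free — all displace $x$ by essentially at most $2(5D+\delta)$, and hence all sit in a single ball of radius $\sim 5D+\delta$ about the midpoint of the orbit segment.

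Next I would bring in the two external inputs at exactly this scale. The Bishop--Gromov inequality (Theorem~5.1 of \cite{BCGS} and \ref{cocompact2}), applied with $\Ent(X,d)\le H$ and $\diam(\Gamma\backslash X)\le D$, controls the growth of $\mu_x^\Gamma$ and produces the explicit constant $N$ above. I would then feed $N$ into the generalized Margulis lemma of Breuillard--Green--Tao (Theorem~\ref{BGTbasic}, from \cite{BGT}, Corollary~1.7): the subgroup generated by the elements displacing $x$ by at most $2(5D+\delta)$ is virtually nilpotent, and since an amenable subgroup acting properly on a $\delta$-hyperbolic space is elementary and torsion-freeness upgrades ``virtually cyclic'' to ``infinite cyclic'', this subgroup is infinite cyclic. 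Used together with the bounded doubling coming from $N$ and the non-elementarity of $\Gamma$, this caps at $\nu(N)$ the number of distinct elements that may displace $x$ by at most $2(5D+\delta)$. This contradicts the previous paragraph, where $\nu(N)+2$ such elements were exhibited, and the contradiction yields the claimed inequality.

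The hard part will be twofold. The first difficulty is the bookkeeping that aligns three scales — the displacement threshold $2(5D+\delta)$ defining the short subgroup, the radius $5D+\delta$ into which the powers are packed, and the scale at which Bishop--Gromov certifies $N$ — including absorbing the additive $C\delta$ corrections so that all $\nu(N)+2$ powers genuinely qualify as short. The second, and deeper, difficulty is the passage from ``bounded doubling'' to ``translation length bounded below'': this is exactly where non-virtual-cyclicity is indispensable, since for a virtually cyclic group the doubling constant stays near $2$ while $\ell$ collapses, so the quantitative gap can only arise from the interplay, furnished by \ref{BGTbasic}, between the infinite cyclic structure of the short subgroup and the exponential growth of the ambient non-elementary orbit measured by $H$. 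Checking that the constant emerging from this interplay is precisely $\frac{2(5D+\delta)}{\nu(N)+2}$ is the delicate quantitative heart of the proof.
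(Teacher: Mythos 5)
Your setup (place $x$ near a quasi-axis of $\gamma$ so that $d(x,\gamma^k x)\le |k|\ell(\gamma)+C\delta$, pack $\nu(N)+2$ powers of $\gamma$ into a ball, feed the Bishop--Gromov ratio into Breuillard--Green--Tao) follows the paper up to the point where the contradiction must be extracted, but the contradiction you propose does not work. Theorem \ref{BGTbasic} does \emph{not} cap at $\nu(N)$ the number of elements displacing $x$ by a given amount; it only says that the set of such elements is covered by $\nu(N)$ left-cosets of a subgroup $G_0$, and each coset may contain arbitrarily many of them. In particular all $\nu(N)+2$ powers $\gamma^0,\dots,\gamma^{\nu(N)+1}$ can perfectly well sit inside a single coset of $G_0$ (indeed they do: $G_0$ ends up being commensurable with $\langle\gamma\rangle$), so exhibiting $\nu(N)+2$ short elements contradicts nothing. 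The alternative reading of your third paragraph --- that the subgroup \emph{generated} by all elements displacing $x$ by at most $2(5D+\delta)$ is virtually nilpotent --- is a Margulis-lemma statement that is not the version of \cite{BGT} quoted in the paper, and it cannot be invoked here because the elements of $\Sigma_{2D}(x)$ that generate $\Gamma$ live at the \emph{same} scale as the ball on which the doubling bound is certified, not at the much smaller Margulis scale $\epsilon(K)$ times it.

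What is actually needed, and what your proposal omits, is the content of Proposition \ref{decrireG3}: the pigeonhole on the $\nu+2$ powers of $\sigma$ is used \emph{constructively} to produce a nontrivial torsion-free element $\sigma^{p-q}\in G_0$, which (since $G_0$ is virtually nilpotent, hence virtually cyclic by cocompactness on a hyperbolic space) forces $G_0\subset\Gamma_\sigma$, the maximal virtually cyclic subgroup containing $\sigma$. Then, for \emph{every} $g$ with $d(x,gx)\le R$, one repeats the pigeonhole on the conjugates $g^{-1}\sigma^k g$ (which are also short because $\sigma^k\in\Sigma_{2R}(x)$ implies $g^{-1}\sigma^k g\in\Sigma_{4R}(x)$) to find a nontrivial hyperbolic element in $G_0\cap g^{-1}\Gamma_\sigma g$, whence $g^{-1}\Gamma_\sigma g=\Gamma_\sigma$ by maximality, and $g\in\Gamma_\sigma$ by Proposition \ref{actionelementaire}~(v). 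This shows $\Gamma_R(x)\subset\Gamma_\sigma$ is virtually cyclic; since $R=\tfrac52(D+4\delta)\ge 2D$ and $\Gamma_{2D}(x)=\Gamma$ by Proposition \ref{generatorsdiameter}, the group $\Gamma$ itself would be virtually cyclic, which is the contradiction. This conjugation-and-maximality step is the quantitative and conceptual heart of the proof, and it is precisely the gap your sketch acknowledges in its last paragraph without filling.
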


\noindent Here $\nu(.)$ is a function which comes from the above-mentioned works by E.~Breuillard, B.~Green and T.~Tao (see \cite{BGT}, Corollary 1.7, stated as Theorem \ref{BGTbasic} in the sequel). This is a key step of this article with the most involved proof. 

In Section \ref{sec:topo}, various other finiteness results are drawn from Theorems \ref{intro:boundongroups} and \ref{intro:minordeplacement}. More precisely:

\noindent in Theorem \ref{theo:quasiisometric} we show that the number of $\delta$-hyperbolic spaces $X$ which admit a discrete torsion-free,  non-elementary  cocompact group of isometries $\Gamma$ with $\diam(\Gamma\backslash X)\leq D$ and $\Ent(X)  \leq H$ is finite, up to  quasi-isometries whose constants are computable in terms of $\delta, D, H$.

\noindent  Theorem \ref{3manifolds} implies in particular that the number of closed Riemannian 3-manifolds $Y$ with torsion-free fundamental group and with $\diam(Y) \leq D$, whose universal covering $\widetilde Y$ is $\delta$-hyperbolic and satisfies $\Ent(\widetilde Y)  \leq H$, is finite up to diffeomorphisms.
 
\noindent A consequence of Theorems \ref{boundedpi1s} and \ref{boundedtopology} is that the number of compact connected aspherical ANR metric  spaces $Y$  (respectively of closed aspherical Riemannian manifolds $Y$ of dimension $\neq 4$)    with $\diam(Y)\le D$ and  whose metric  universal covering $\widetilde Y$ is  $\delta$-hyperbolic and satisfies  $\Ent(\widetilde Y) \le H$, is finite up to homotopy equivalences (resp. up to diffeomorphisms).

Finally, in Section \ref{sec:exa} a series of examples (and counter-examples) are exhibited showing that all hypotheses of Theorems \ref{MainIntro} and \ref{intro:boundongroups} are necessary. Appendices \ref{appendices} present the basic facts about Gromov hyperbolicity in order to fix notations and conventions.

\begin{notation} In the sequel, in a metric space $(X,d)$, when there is no ambiguity on the choice of the metric $d$, we shall denote by $B_X (x,r) $  (resp. by
$\overline B_X (x,r)$) the open (resp. closed) ball of radius $r$ centred at a point $x \in X$. We shall also only consider proper geodesic spaces, that is metric spaces for which the closed balls are compact. The action by isometries of a group $\Gamma$ on $(X,d)$ is said to be {\em proper} when, for every $R> 0$, the set $\{ \g \in \Gamma : d( x , \g x) \le R\}$ is finite (this does not depend on the choice of $x \in X$).
\end{notation}

\section{Bounding the number of groups}\label{finiteisomorph}

\emph{In this section we give the definition of a marked group and state a criterion for a family of such groups to be finite, up to isomorphisms.}

\begin{defi}\label{defi:Cayley} 
A \emph{marked group} $(\Gamma, \Sigma)$ is a finitely generated group $\Gamma$ together with one of its finite generating sets $\Sigma$, that we shall always 
suppose to be symmetric\footnote{A subset $A$ of a group $\Gamma$ is said to be \lq \lq symmetric" if, for every $\g \in A$, $\g^{-1} \in A$.}. 
\end{defi}
The group $\Gamma$ is canonically 
endowed with the \emph{algebraic distance} $d_\Sigma$ associated to the generating set $\Sigma$, defined as follows: considering the action of $ \Gamma $ on its {\em Cayley graph $ {\cal G} (\Gamma , \Sigma) $}, we denote by $ | \gamma |_\Sigma $ the {\em word metric} related to $ \Sigma $, i.e the minimal number
of factors in a decomposition of $\g$ as a product of elements of $\Sigma$, we then define $ d_\Sigma $ by 
$$ d_\Sigma (\g, g) := |\g^{-1} g   |_\Sigma\,. $$

By definition, the algebraic distance $d_\Sigma $ on $\Gamma$ is the restriction to the vertices of the length distance on the graph $ {\cal G} (\Gamma , \Sigma) $. 
For the sake of simplicity, we shall use the same notation $ d_\Sigma $ for these two distances, on the group and on the graph.

An {\em isomorphism of marked groups} $\varphi : (\Gamma, \Sigma) \f (\Gamma', \Sigma')$ is an isomorphism $\varphi : \Gamma \f \Gamma'$ which maps 
$\Sigma$ onto $\Sigma'$. As the generating sets are all supposed to be symmetric, we remark that $\varphi$ is an isomorphism of marked groups if and only if it is an isometric isomorphism between $(\Gamma, d_\Sigma)$ and $(\Gamma', d_{\Sigma'})$.
For the sake of simplicity, we shall often write that two marked groups are \lq \lq {\em isometric}" instead of writing that they are \lq \lq isomorphic by an isomorphism of marked groups". Hence, by definition, an {\em isometry} between $(\Gamma, \Sigma)$ and $(\Gamma', \Sigma')$ will always be assumed
to be an isomorphism of marked groups.

Now let $\mathbf F (\Sigma)$ be \emph{the free group generated by $\Sigma$} {\sl i.e.}, if $\# \Sigma =k$, the symmetric generating set of $\mathbf F (\Sigma)$ has $2k$ elements and let $\varphi_{\Sigma} : \mathbf F (\Sigma) \f \Gamma$ be \emph{the canonical epimorphism} which maps $\Sigma\subset\mathbf F (\Sigma) $ onto $\Sigma\subset\Gamma$. Obviously $\varphi_{\Sigma}$ induces an isomorphism between $\mathbf F (\Sigma) / {\Ker \varphi_{\Sigma}}$ and $\Gamma $, called \emph{the canonical isomorphism}. The normal subgroup $\Ker \varphi_{\Sigma}\subset\mathbf F (\Sigma)$ is called the group of relations between elements of $\Sigma$. 
For any subset $\mathcal R$ of $\Ker \varphi_{\Sigma}$ 
such that $\Ker \varphi_{\Sigma}$ is the smallest normal subgroup of $\mathbf F (\Sigma)$ which contains $\mathcal R$, we shall say that $\mathcal R$ is a 
\emph{generating set of the relations}  and that $\langle \Sigma | {\mathcal R} \rangle$ is a \emph{presentation of $\Gamma$ by generators and relations}.
Let us insist on the fact that any relation in $\Ker \varphi_{\Sigma}$ is a product of \emph{conjugates} of the elements of $\mathcal R$.

Conversely, given a finite set $\Sigma$, one can construct the associated free group $\mathbf F (\Sigma)$; to any finite subset $\mathcal R$ of 
$\mathbf F (\Sigma)$ corresponds a group $\Gamma$ whose presentation is $\langle \Sigma  | {\mathcal R} \rangle$. Indeed, let $\langle\langle {\mathcal R} \rangle\rangle$ be the
smallest normal subgroup of $\mathbf F (\Sigma)$ containing $\mathcal R$, we define $\Gamma$ as $\mathbf F (\Sigma) / \langle\langle {\mathcal R} \rangle\rangle$.

\begin{notation} 
For the sake of simplicity, we shall say that a family $\mathcal F$ of marked groups has cardinality (up to isometries) smaller than $q < + \infty$
whenever there exist marked groups $(\Gamma_1, \Sigma_1), \ldots , (\Gamma_q, \Sigma_q)$ in $\mathcal F$ such that any other marked group $(\Gamma , \Sigma)\in \mathcal F$ is isometric to some of the $(\Gamma_i, \Sigma_i)$'s. We shall also say that a family $\mathcal G$ of groups has cardinality (up to isomorphisms) smaller than $q$
whenever there exist groups $\Gamma_1 , \ldots , \Gamma_q $ in $\mathcal G$ such that any other group $\Gamma \in \mathcal G$ is isomorphic to some of the $\Gamma_i $'s.
\end{notation}

The following criterion for finiteness is part of the folklore on the subject:

\begin{lemma}\label{presentations}
Given integers $N, p$ such that $N\ge 1$ and $p \ge 3$, let us define $q := \sum_{k = 0}^{N}  2^{(2 k)^p}$. Then, the family of marked groups $(\Gamma , \Sigma )$, such that the group $\Gamma$ admits a presentation $\langle \Sigma | {\mathcal R} \rangle$ by generators and relations with $\# \Sigma \le N$ and such that the 
word-length $| r |_\Sigma\leq p$ for every relation $r \in {\mathcal R}$, has cardinality (up to isometries) smaller than $q < + \infty$.
\end{lemma}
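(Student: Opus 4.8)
The plan is to reduce the counting of marked groups (up to isometry) to the counting of the combinatorial data defining a presentation, and then to bound the latter by an elementary estimate on the free group. First I would fix the cardinality $k := \# \Sigma$, which ranges over $\{0, 1, \dots, N\}$ since $\#\Sigma \le N$. Because an isometry of marked groups is in particular a bijection $\Sigma \f \Sigma'$, two marked groups whose generating sets have different cardinalities are never isometric; it therefore suffices to bound, for each such $k$, the number of isometry classes with $\# \Sigma = k$, and to sum these bounds over $k$.

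For a fixed $k$ I would work with a single model free group $\mathbf F_k$ on $k$ fixed free generators. If $(\Gamma, \Sigma)$ admits a presentation $\langle \Sigma \mid {\mathcal R}\rangle$ with $\#\Sigma = k$ and $|r|_\Sigma \le p$ for all $r \in {\mathcal R}$, then, by the canonical isomorphism, $(\Gamma, \Sigma)$ is isometric to $(\mathbf F (\Sigma)/\langle\langle {\mathcal R}\rangle\rangle, \overline{\Sigma})$, and any bijection between $\Sigma$ and the fixed generators of $\mathbf F_k$ identifies this, as a marked group, with $(\mathbf F_k / \langle\langle S\rangle\rangle, \overline{\Sigma})$, where $S \subseteq \mathbf F_k$ is the image of ${\mathcal R}$. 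Since $|r|_\Sigma \le p$, the set $S$ lies in the ball $B(p) \subseteq \mathbf F_k$ of elements of reduced length at most $p$. Hence every marked group of the prescribed type with $\#\Sigma = k$ is isometric to one of the explicit groups $(\mathbf F_k / \langle\langle S\rangle\rangle, \overline{\Sigma})$ with $S \subseteq B(p)$, so the number of such isometry classes is at most the number of subsets of $B(p)$, namely $2^{\# B(p)}$.

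It remains to bound $\# B(p)$, and this is where the hypothesis $p \ge 3$ enters. Counting reduced words gives $\# B(p) = 1 + \sum_{\ell = 1}^{p} 2k\,(2k-1)^{\ell - 1}$, and I would establish the elementary inequality
$$ \# B(p) \le (2k)^p \qquad (k \ge 1,\ p \ge 3). $$
For $k \ge 2$ it suffices, after summing the geometric series, to verify that $\big(1 - \tfrac{1}{2k}\big)^p \le 1 - \tfrac1k$; the left-hand side is non-increasing in $p$, and the case $p = 3$ is checked by a direct expansion (the inequality fails already at $p = 2$, e.g. $\# B(2) = 17 > 16$ for $k = 2$, which is exactly why $p \ge 3$ is imposed). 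The remaining cases are handled separately: $k = 1$ gives $\# B(p) = 1 + 2p \le 2^p$, and $k = 0$ yields a single marked group (the trivial one), matching the term $2^{(2\cdot 0)^p} = 1$. Combining these estimates gives at most $2^{(2k)^p}$ isometry classes for each $k$, and summation over $k = 0, \dots, N$ produces the bound $q = \sum_{k=0}^N 2^{(2k)^p}$. The only genuine difficulty is the combinatorial inequality above; everything else is bookkeeping resting on the canonical presentation recalled before the statement.
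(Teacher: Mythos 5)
Your proposal is correct and follows essentially the same route as the paper: reduce to a fixed model free group on $k$ generators, bound the number of isometry classes by the number of subsets of the ball of radius $p$, establish $\# B(p)\le (2k)^p$ using $p\ge 3$, and sum over $k\le N$. The only difference is that you carry out the elementary inequality $1+\sum_{\ell=1}^{p}2k(2k-1)^{\ell-1}\le (2k)^p$ in detail (correctly, via $(1-\tfrac{1}{2k})^p\le 1-\tfrac1k$ for $k\ge 2$), whereas the paper leaves it to the reader.
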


\begin{proof} 
Given a set $\Sigma$ such that $1 \le k := \# \Sigma \le N$, the number of possible presentations $\langle \Sigma | {\mathcal R} \rangle$ of non trivial groups, 
such that the word-length of every $r \in {\mathcal R}$ is bounded from above by $p$, is smaller than the number of subsets of the closed ball of radius $p$ in 
$\mathbf F (\Sigma)$. It is thus smaller than $2^{(2 k)^{p}}$, since the number of elements in this ball  is 
$$1 +2 k + 2 k (2 k-1) \ldots +2 k (2 k-1)^{p-1} \le (2 k)^p\,, $$
the proof of this last inequality being left to the reader (Hint: use the hypothesis 
$p \ge 3$ to solve the case $k \ge 2$).

For every presentation $\langle \Sigma' | {\mathcal R}' \rangle$ of a marked group $(\Gamma ', \Sigma ')$ satisfying the assumptions of the lemma, {\sl i.e} such that $\# \Sigma' = \# \Sigma =k \leq N$ and $|r|_{\Sigma '} \leq p$ for every $r\in \mathcal R '$, we choose a marked group isomorphism 
$\psi : (\mathbf F (\Sigma) , \Sigma) \f \mathbf (F (\Sigma'), \Sigma ')$ and define ${\mathcal R} := \psi^{-1} ({\mathcal R}')$. Notice that every $r\in \mathcal R$ satisfies $|r|_\Sigma \leq p$ since every element in $\mathcal R '$ does. We obtain that
$\langle\langle {\mathcal R}' \rangle\rangle = \psi \big(\langle\langle {\mathcal R} \rangle\rangle \big)$ and $\psi$ induces an 
isomorphism $\widetilde \psi : \mathbf F (\Sigma) / \langle\langle {\mathcal R} \rangle\rangle \f \mathbf F (\Sigma') / \langle\langle {\mathcal R}' \rangle\rangle$.
It follows that $\widetilde \psi$ is an isomorphism of marked groups between $\big(\mathbf F (\Sigma) / \langle\langle {\mathcal R} \rangle\rangle , \Sigma \big)$
 and
$\big(\mathbf F (\Sigma') / \langle\langle {\mathcal R}' \rangle\rangle , \Sigma' \big)$.
Hence, for every marked group $(\Gamma ', \Sigma ')$ such that $\#\Sigma ' =k$, there exists some subset $\mathcal R$ of the closed ball of radius $p$ in $\mathbf F (\Sigma)$ such that 
$(\Gamma ', \Sigma ')$ is isomorphic to $\big(\mathbf F (\Sigma) / \langle\langle {\mathcal R} \rangle\rangle , \Sigma \big)$, where $\#\Sigma  =\#\Sigma' \leq N$ and every $r\in \mathcal R$ satisfies $|r|_\Sigma \leq p$.
Therefore, for every $k \ge 1$, the number of marked groups $(\Gamma ', \Sigma ')$  (modulo isomorphisms of marked groups, {\sl i.e.} up to 
isometries), such that $\# \Sigma ' = k$ and admitting a presentation $\langle \Sigma' | {\mathcal R'} \rangle$ with relators of length at most $p$, is smaller than 
the number of possible presentations $\langle \Sigma | {\mathcal R} \rangle$ as above, and consequently is less than $2^{(2 k)^{p}}$. Summing in $k$ ends the proof of the lemma.
\end{proof}

\begin{remark}
As a conclusion, if one aims at bounding the cardinality (up to isometries) of a family $\mathcal F$ of marked groups, the goal is then clear: one 
has to prove that, for every $(\Gamma , \Sigma ) \in {\cal F}$, the cardinality of its set of generators $\Sigma$ is uniformly bounded and to find 
a generating set $\mathcal R$ of its relations such that the word-length of all the elements of $\mathcal R$ is uniformly bounded too.
\end{remark}

In Subsections \ref{subsec:simplyconnected} and  \ref{subsec:BoundingHyperbolic}  we present two situations where it is possible to bound the word-length of all the elements of a generating set of the relations.

\subsection{Groups acting on a simply connected metric space}\label{sec:simplyconnected}\label{subsec:simplyconnected}

In this subsection, we consider a path-connected metric space $(X , d) $, and a proper action by isometries of a group $\Gamma$, on 
$(X,d)$, such that the diameter of $\Gamma \backslash X$ is bounded from above by $D$. Choosing $x \in X$ and a real number $R>0$, we denote by $\Sigma_R(x):=\{ \gamma \in \Gamma  : d(x , \gamma x) \le R\}$ and by $\Gamma_R(x)$ the subgroup of $\Gamma$ generated by $\Sigma_R(x)$. Let $k\geq 2$ be an integer, the set $\Sigma:=\Sigma_{kD}(x)$ is a symmetric generating set of $\Gamma$ by Proposition \ref{generatorsdiameter}, that is: $\Gamma_{kD}(x)=\Gamma$.

When $(X,d)$ is a simply connected length space, each group $\Gamma$ admits a presentation $\langle \Sigma_{2 D} (x) |  {\mathcal R} \rangle$ 
such that the word-length of all the elements of $\mathcal R$ is $\le 3$. This is an immediate consequence of the following Lemma, which can be found in 
J. P. Serre's book (\cite{Se} p. 30), citing A. M. Macbeath\footnote{\cite{Se} also refers to analogous results by Gerstenhaber, Behr and Weil.}, and see M. Gromov (\cite{Gr1}) for a Riemannian version:

\begin{lemma}\label{Serre} Let $X$ be a path-connected topological space and $\Gamma$ a group acting by homeomorphisms on $X$. Then, for every path-connected open set $U \subset X$ such that  $ \cup_{\g \in \Gamma} \, \g (U) = X$, the set $\Sigma := \{ \g \in \Gamma : \g(U) \cap U \ne \emptyset\} $ is a symmetric generating set of $\Gamma$. If moreover $X$ is simply connected, then there exists a generating set $\mathcal R$ of the relations between elements of $\Sigma$ such that each element of $\mathcal R$ can be written as a product 
$ s\, \sigma \, t$ where $s ,\sigma , t \in \Sigma$. More precisely $\mathcal R$ is defined as the set of the $ s \, \sigma \, t \in \mathbf F (\Sigma)$ such that 
$U \cap \varphi_\Sigma (s).U \cap \varphi_\Sigma (s \sigma).U \ne \emptyset$ and $ \varphi_\Sigma (t) =  \varphi_\Sigma \big( (s \sigma)^{-1}\big)$.
\end{lemma}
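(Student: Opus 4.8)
The plan is to treat the two assertions separately, the first (that $\Sigma$ is a symmetric generating set) being elementary and the second (the explicit description of $\mathcal R$) being where simple connectedness really enters. Symmetry is immediate: if $\gamma(U)\cap U\neq\emptyset$, then applying the homeomorphism $\gamma^{-1}$ gives $U\cap\gamma^{-1}(U)\neq\emptyset$, so $\gamma^{-1}\in\Sigma$. To see that $\Sigma$ generates, let $\Gamma'=\langle\Sigma\rangle$ and, for each coset $c\in\Gamma/\Gamma'$, set $W_c:=\bigcup_{\gamma\in c}\gamma(U)$; these are open sets covering $X$. If $W_c\cap W_{c'}\neq\emptyset$, then $\gamma_1(U)\cap\gamma_2(U)\neq\emptyset$ for some $\gamma_1\in c$, $\gamma_2\in c'$, whence $\gamma_2^{-1}\gamma_1\in\Sigma\subset\Gamma'$ and $c=c'$. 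Thus the nonempty $W_c$'s form a partition of $X$ into disjoint open sets; as $X$ is connected there is a single coset, i.e.\ $\Gamma'=\Gamma$.

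For the second assertion, let $\varphi_\Sigma:\mathbf F(\Sigma)\to\Gamma$ be the canonical epimorphism and put $N:=\langle\langle{\mathcal R}\rangle\rangle$. Since each $s\sigma t\in\mathcal R$ satisfies $\varphi_\Sigma(s\sigma t)=\varphi_\Sigma(s\sigma)\,\varphi_\Sigma\big((s\sigma)^{-1}\big)=1$, we have $N\subset\Ker\varphi_\Sigma$, so it remains to prove $\Ker\varphi_\Sigma\subset N$. I would take a word $w=\sigma_1\cdots\sigma_n$ with $\varphi_\Sigma(w)=1$ and turn it into a loop: setting $h_i:=\varphi_\Sigma(\sigma_1\cdots\sigma_i)$ (so $h_0=h_n=1$), consecutive translates satisfy $h_{i-1}(U)\cap h_i(U)=h_{i-1}\big(U\cap\sigma_i(U)\big)\neq\emptyset$; I may then pick $p_i$ in this intersection and, using that $U$ and hence each $h_i(U)$ is path-connected, join $p_i$ to $p_{i+1}$ by a path inside $h_i(U)$. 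Concatenating these paths (and closing up through a basepoint $x_0\in U$) produces a loop $c$ at $x_0$ whose combinatorial reading is $w$.

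Because $X$ is simply connected, $c$ bounds a continuous map $H\colon D^2\to X$, and the heart of the proof is a van Kampen style filling argument. Since $\{\gamma(U)\}_{\gamma\in\Gamma}$ is an open cover of the compact disc, I would triangulate $D^2$ so finely that the image of the open \emph{star} of every vertex lies in a single translate, and assign to each vertex $v$ an element $\gamma_v$ with $H(\mathrm{star}(v))\subset\gamma_v(U)$, taking $\gamma_v=h_i$ at the boundary vertices coming from the arc inside $h_i(U)$. For an edge $\{v,v'\}$ one then has $H(v')\in\gamma_v(U)\cap\gamma_{v'}(U)$, so $\gamma_{v'}^{-1}\gamma_v\in\Sigma$; for a triangle $\{a,b,c\}$ the whole image lies in $\gamma_a(U)\cap\gamma_b(U)\cap\gamma_c(U)$, so after conjugating one vertex to the identity the product of its three edge-generators is a relator of the form $s\sigma t$ with $U\cap\varphi_\Sigma(s)(U)\cap\varphi_\Sigma(s\sigma)(U)\neq\emptyset$ and $\varphi_\Sigma(t)=\varphi_\Sigma\big((s\sigma)^{-1}\big)$, i.e.\ exactly an element of $\mathcal R$. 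Reading the boundary of $D^2$ as the ordered product of the triangle boundaries (each conjugated by a path to $x_0$, the interior edges cancelling in pairs) expresses $w$ as a product of conjugates of elements of $\mathcal R$, hence $w\in N$.

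The main obstacle is precisely the combinatorial bookkeeping of this last step, and in particular securing the \emph{triple} overlap $\gamma_a(U)\cap\gamma_b(U)\cap\gamma_c(U)\neq\emptyset$ demanded by the definition of $\mathcal R$: a naive choice of one $\gamma_v$ per vertex with only $H(v)\in\gamma_v(U)$ yields merely pairwise overlaps. The device of refining the triangulation until each open star (not just each triangle) maps into a single translate is what upgrades these to genuine triple overlaps. One must also check that the boundary subdivision can be chosen compatibly with the marked points $p_i$, so that the edge-generators along the $i$-th arc are trivial and the transitions across the $p_i$'s spell precisely $\sigma_1\cdots\sigma_n$; this guarantees that the boundary word equals $w$ in $\mathbf F(\Sigma)$ up to free reduction, which is what makes the filling conclude $w\in N$.
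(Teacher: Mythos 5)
Your argument is correct in substance, but note that the paper itself offers no proof of this lemma: it is quoted from Serre's book, which reproduces Macbeath's argument. That argument is genuinely different from yours. Macbeath starts from the abstract group $\hat\Gamma := \mathbf F(\Sigma)/\langle\langle {\mathcal R}\rangle\rangle$ with the indicated relators, forms the quotient space $\big(\hat\Gamma \times U\big)/\!\sim$ (where $(\hat\g , u) \sim (\hat\g', u')$ iff $u' = \varphi(\hat\g'{}^{-1}\hat\g)\, u$ for the corresponding element of $\Sigma$), shows that the natural map to $X$ is a covering, and concludes from the simple connectedness of $X$ that this covering is trivial, i.e.\ that $\hat\Gamma \f \Gamma$ is an isomorphism. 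Your route instead takes a word $w \in \Ker \varphi_\Sigma$, realizes it as a null-homotopic loop, fills it with a triangulated disc, and reads the filling as a van Kampen diagram. Both are classical; the covering-space argument trades your combinatorial bookkeeping for the verification that the quotient is indeed a covering (this is where openness and path-connectedness of $U$ enter), while your argument is more hands-on and makes visible exactly why the triple-overlap condition in the definition of $\mathcal R$ is the right one — your observation that one must refine until each open \emph{star} (not merely each triangle) maps into a single translate is precisely the point where a naive version of the argument would fail. Your first paragraph (generation via the partition of $X$ into the open sets $W_c$) is the standard proof and is fine.

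Two small points of bookkeeping that you leave implicit but that do need a word. First, $1 \in \Sigma$ (since $U \cap U \neq \emptyset$), so $\mathbf F(\Sigma)$ carries a letter mapping to the identity; edges of your triangulation whose endpoints receive the same label contribute this letter, and the boundary word you read off is $w$ only after killing it. This is harmless because $s\cdot 1\cdot s^{-1} \in {\mathcal R}$ for every $s \in \Sigma$, so the letter $1$ already lies in $\langle\langle {\mathcal R}\rangle\rangle$. Second, your filling produces a word in \emph{positive} letters of $\Sigma$, whereas $w$ may involve formal inverses; the identification of the letter $\tau \in \Sigma$ with the formal inverse of $\sigma$ when $\tau = \sigma^{-1}$ in $\Gamma$ is again a consequence of relators of the form $\sigma\,\tau\,1 \in {\mathcal R}$. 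Neither point is a gap in the method, but a complete write-up should record them.
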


A consequence is the following 

\begin{corollary}\label{Serre1}
Let $N\ge 1$ be an integer, we define $q := \sum_{k = 0}^{N}  2^{(2 k)^3}$. The family of groups $\Gamma$ which admit a proper isometric action on a simply connected length space $(X,d)$ such that 
$\diam (\Gamma \backslash X) \le D$ and $\# \big( \Sigma_{2 D} (x) \big) \le N$ for at least one $x \in X$, has cardinality (up to isomorphisms) smaller than $q$.
\end{corollary}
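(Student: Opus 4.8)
The plan is to combine Lemma \ref{Serre} (in the special case where $U$ is a suitable ball) with the counting Lemma \ref{presentations}. The key observation is that Corollary \ref{Serre1} is exactly the statement obtained by feeding the presentation produced by Lemma \ref{Serre} into Lemma \ref{presentations} with the parameters $p = 3$ and the given bound $N$ on the number of generators.

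First I would fix a group $\Gamma$ in the family together with a witnessing simply connected length space $(X,d)$, a proper isometric action with $\diam(\Gamma \backslash X) \le D$, and a point $x \in X$ for which $\#\big(\Sigma_{2D}(x)\big) \le N$. The goal is to produce a presentation $\langle \Sigma \mid \mathcal R \rangle$ of $\Gamma$ with $\#\Sigma \le N$ and with every relator of word-length at most $3$. To apply Lemma \ref{Serre} I would choose the open set $U := B_X(x, r)$ for a radius $r$ slightly larger than $D$ (say any $r \in (D, 2D)$, or one takes the obvious limiting/open-ball adjustment so that the translates $\g(U)$ still cover $X$). Since $\diam(\Gamma \backslash X) \le D$, every point of $X$ lies within distance $D$ of the orbit $\Gamma x$, so the $\Gamma$-translates of $U$ cover $X$; and $U$, being an open metric ball in a length space, is path-connected. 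Lemma \ref{Serre} then gives that $\Sigma := \{\g \in \Gamma : \g(U) \cap U \ne \emptyset\}$ is a symmetric generating set admitting a generating set $\mathcal R$ of relations each of word-length $\le 3$ (each relator being of the form $s\,\sigma\,t$ with $s,\sigma,t \in \Sigma$).

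Next I would check the crucial inclusion $\Sigma \subseteq \Sigma_{2D}(x)$, so that $\#\Sigma \le \#\big(\Sigma_{2D}(x)\big) \le N$. This is the only genuinely geometric point: if $\g(U) \cap U \ne \emptyset$ with $U = B_X(x,r)$ and $r \le D$ (one sees here why the open-ball radius must be taken $\le D$, or why one passes to $\overline{B}_X(x,D)$), there is a point $y$ with $d(x,y) < r$ and $d(x, \g^{-1} y) = d(\g x, y) < r$, whence by the triangle inequality $d(x, \g x) \le d(x,y) + d(y, \g x) < 2r \le 2D$, so $\g \in \Sigma_{2D}(x)$. Thus $\Sigma$ is a symmetric generating set with $\#\Sigma \le N$ and the presentation $\langle \Sigma \mid \mathcal R\rangle$ has all relators of length $\le 3 = p$.

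Finally I would invoke Lemma \ref{presentations} with these parameters $N$ and $p = 3$: every $\Gamma$ in the family arises (up to isomorphism of marked groups, hence in particular up to group isomorphism) as one of the finitely many groups counted there, namely at most $q = \sum_{k=0}^{N} 2^{(2k)^3}$ of them, which is precisely the stated bound. I expect the main (indeed only) obstacle to be the bookkeeping around the radius of $U$: one must reconcile the use of an \emph{open} path-connected set demanded by Lemma \ref{Serre} with the \emph{closed}-ball definition $\Sigma_{2D}(x) = \{\g : d(x,\g x) \le 2D\}$ used in the counting hypothesis, and ensure the translates still cover $X$ when $\diam(\Gamma\backslash X) \le D$ is only an upper bound (so that $D$, rather than a strictly larger radius, suffices). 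This is a standard $\varepsilon$-type adjustment rather than a real difficulty, and the counting then follows verbatim from Lemma \ref{presentations}.
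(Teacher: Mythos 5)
Your proposal is correct and follows the paper's proof essentially verbatim: the paper takes $U = B_X(x, D+\varepsilon)$, notes that in a length space $\gamma(U)\cap U \neq \emptyset$ is equivalent to $d(x,\gamma x) < 2(D+\varepsilon)$, and feeds the resulting presentation with relators of word-length $\le 3$ into Lemma \ref{presentations}. The one bookkeeping point you flag (covering requires an open ball of radius $> D$, while the hypothesis bounds the closed-ball set $\Sigma_{2D}(x)$) is settled exactly as you guess: by properness of the action the sets $\Sigma_{2(D+\varepsilon)}(x)$ are finite, hence coincide with $\Sigma_{2D}(x)$ for $\varepsilon$ small enough.
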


\begin{proof}[Proof of Corollary \ref{Serre1}]
For every $\e> 0$, we apply Lemma \ref{Serre}, choosing $U := B_X (x, D +\e)$. Indeed, $B_X (x, D+\e)$ is  path-connected since $(X,d)$ is a 
length space and, as $B_X (x, D + \e)$ contains a fundamental domain, $ \cup_{\g \in \Gamma} \, \g \big(B_X (x, D+ \e) \big) = \cup_{\g \in \Gamma} \, B_X (\g x, D+ \e) = X$. As $(X,d)$ is a length space, the condition $B_X ( x, D+ \e) \cap \g \big(B_X (x, D+ \e) \big) \ne \emptyset $ is equivalent to 
$d (x, \g x) < 2 (D+ \e)$. Lemma \ref{Serre} then proves that $\Sigma := \{ \g \in \Gamma : \g(U) \cap U \ne \emptyset\} 
=  \Sigma_{2 (D+\e)} (x) $  is a symmetric generating set of $\Gamma$ and that there exists a generating set $\mathcal R$ of the relations between elements of $\Sigma$ such that each element of $\mathcal R$ can be written $ s\, \sigma \, t$ where $s ,\sigma , t \in \Sigma$. We moreover notice that, if $\e$ is small 
enough, then $\Sigma_{2 (D+\e)} (x) = \Sigma_{2 D} (x)$; indeed, the properness of the action implies that $\Sigma_{2 (D+\e)} (x)$, hence $\Sigma_{2 (D+\e)} (x)\setminus\Sigma_{2 D} (x)$, is finite.

It follows that the hypotheses of Lemma \ref{presentations} are verified by the marked group $(\Gamma, \Sigma)$, where $\Sigma := \Sigma_{2 D} (x)$, 
where $p = 3$ and $q := \sum_{k = 0}^{N}  2^{(2 k)^3}$, Lemma \ref{presentations} then proves the finiteness, up to isometries, of the marked groups $(\Gamma, \Sigma_{2 D} (x))$, hence the finiteness up to isomorphisms of the groups $\Gamma$.
\end{proof}

\subsection{Bounding hyperbolic groups}\label{subsec:BoundingHyperbolic}

The goal of this article is to bound the number of $\delta$-hyperbolic groups satisfying certain properties (see Theorem \ref{Nbgroupeshyp} for a precise statement). The following result is a first step.

\begin{prop}\label{Nbgroupes}
For every $k\in \N^*$ and $\delta \geq 0$, let $q_k := \sum_{i = 0}^{k}  2^{(2 i)^{4\delta + 6}}$. The set  of $\delta$-hyperbolic marked groups $(\Gamma , \Sigma)$ such that $\# \Sigma \le k$ has cardinality (up to isometries) less than $q_k$.
\end{prop}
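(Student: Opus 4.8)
The plan is to reduce this statement directly to the finiteness criterion of Lemma \ref{presentations}, using the fact -- recalled in the introduction and proved in \cite{BH}, chapter III.$\Gamma$, proof of Proposition 2.2 -- that $\delta$-hyperbolicity forces the existence of a presentation with uniformly short relators. Concretely, every $\delta$-hyperbolic marked group $(\Gamma,\Sigma)$ admits a presentation $\langle \Sigma | {\mathcal R}\rangle$ by generators and relations such that $|r|_\Sigma \le 4\delta + 6$ for every $r \in {\mathcal R}$. This is the only geometric input; the remainder of the argument is bookkeeping.

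First I would fix the integer $p := \lfloor 4\delta + 6\rfloor$. Since word-lengths take integer values and $\delta \ge 0$, the inequality $|r|_\Sigma \le 4\delta + 6$ for $r \in {\mathcal R}$ is equivalent to $|r|_\Sigma \le p$, and moreover $p \ge 6 \ge 3$, so that the hypothesis $p \ge 3$ of Lemma \ref{presentations} is satisfied. Thus every $\delta$-hyperbolic marked group with $\# \Sigma \le k$ lies in the family considered in Lemma \ref{presentations} with $N := k$ and with this value of $p$. Applying that lemma, the family in question has cardinality (up to isometries) at most $q := \sum_{i=0}^{k} 2^{(2i)^{p}}$.

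It then remains to compare $q$ with the stated bound $q_k = \sum_{i=0}^{k} 2^{(2i)^{4\delta+6}}$. For $i = 0$ both summands equal $2^{0} = 1$, as both exponents $p$ and $4\delta+6$ are positive; for $i \ge 1$ we have $2i \ge 2 > 1$ together with $p = \lfloor 4\delta + 6\rfloor \le 4\delta + 6$, whence $(2i)^{p} \le (2i)^{4\delta+6}$ and therefore $2^{(2i)^{p}} \le 2^{(2i)^{4\delta+6}}$. Summing term by term yields $q \le q_k$, which gives the claimed bound.

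I expect essentially no obstacle here: the genuinely non-trivial content is external, namely the short-relators theorem for hyperbolic groups borrowed from \cite{BH} and the enumeration already carried out in the proof of Lemma \ref{presentations}. The only point requiring the slightest care is the passage from the real exponent $4\delta + 6$ appearing in $q_k$ to the integer radius $p$ needed to invoke Lemma \ref{presentations}, which is handled by taking the floor and by the monotonicity comparison above.
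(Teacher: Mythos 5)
Your proof is correct and follows essentially the same route as the paper: invoke the short-relators presentation from \cite{BH} (relators of length at most $4\delta+6$) and then apply Lemma \ref{presentations} with $N=k$. The only difference is that you handle the integrality of the parameter $p$ explicitly via the floor function and a monotonicity comparison, a point the paper glosses over by substituting $p = 4\delta+6$ directly; your version is the more careful one but the argument is the same.
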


\begin{proof} It is well known (see for example \cite{BH}, chapter III.$\Gamma$, proof of Proposition 2.2), that every $\delta$-hyperbolic marked group 
$(\Gamma ,  \Sigma)$ admits a presentation $\langle \Sigma | {\mathcal R} \rangle$ by generators and relations such that the word-length of every relation $r \in {\mathcal R}$ 
is bounded from above by $4 \delta + 6$. We then apply Lemma \ref{presentations}, where we replace $N$ by $k$ and $p$ by $4 \delta + 6$.
\end{proof} 

\begin{remark}\label{rem:bound}
Consequently, if one aims at bounding the number (up to isometries) of $\delta$-hyperbolic marked groups $(\Gamma , \Sigma)$, belonging to a family $\mathcal F$, the goal is then clear: one should bound from above the cardinality of their generating sets $\Sigma$. 
\end{remark}

\section{A lower bound for the asymptotic displacement}\label{systolebound2}

\emph{In order to prove the main theorem (Theorem \ref{MainIntro}) we shall make sure that there is no sequence of spaces $(X, d)$, with groups $\Gamma$ acting properly by isometries on it, satisfying the assumptions, and with elements $\gamma\in\Gamma$ whose displacement at some point $x\in X$ goes to zero. Theorem \ref{theo:minordeplacement} below is the key of the proof of Theorem \ref{MainIntro}. It relies on two other results which we state before.}

\subsection{Main Tools}

It was a revolution when M.~Gromov extended the classical Bishop's inequality to what is now called the Bishop-Gromov's inequality and used it to interpret Cheeger's finiteness Theorem as a consequence of a compactness result. Extending  Bishop-Gromov's inequality to metric measure spaces is a challenging question. The following theorem is such an extension to $\delta$-hyperbolic spaces endowed with a co-compact action of some group of isometries. It is proved in \cite{BCGS}.

\begin{theorem}\label{cocompact2} \emph{(\cite{BCGS}, Theorem 5.1)}
Let $(X , d) $ be a $ \delta$-hyperbolic metric space, for every proper action 
by isometries of a group $\Gamma$ on $(X,d)$ such that the diameter of $\Gamma \backslash X$ and the
entropy of $(X , d) $ are respectively bounded from above by $D$ and $H$, then, for all $x \in X$ and all $R > r \ge 10\,(D + \delta )$, 
the counting measure $ \mu^\Gamma_x $ of the orbit $\Gamma x$ verifies the inequality:
$$\dfrac{\mu_{x}^{\Gamma} \left( B_X \big( x , R \big)\right)}{\mu_{x}^{\Gamma} \big( B_X ( x , r )\big)} < 3 \left( \frac{R}{r} \right)^{25/4} e^{6 H (R- \frac{4}{5} r)} \, .$$
\end{theorem}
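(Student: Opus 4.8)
The plan is to translate the inequality into a combinatorial estimate on the orbit-counting function
$N(t) := \mu^\Gamma_x\big(B_X(x,t)\big) = \#\{\gamma\in\Gamma : d(x,\gamma x) < t\}$, and to bound the ratio $N(R)/N(r)$ by playing off $\delta$-hyperbolicity and cocompactness (which relate the counts at different scales) against the entropy hypothesis (which fixes the exponential growth rate). The only consequence of $\diam(\Gamma\backslash X)\le D$ that I would use is that every point of $X$, and in particular every point of a geodesic segment $[x,\gamma x]$, lies within distance $D$ of the orbit $\Gamma x$; this is what lets one manufacture orbit points at prescribed locations along geodesics.

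The geometric core is a map from the annulus onto the inner ball with uniformly bounded fibres. For each $\gamma$ with $r\le d(x,\gamma x)<R$, fix a geodesic $[x,\gamma x]$, mark the point $y_\gamma$ on it at distance roughly $r-(D+O(\delta))$ from $x$, and choose (by the remark above) an orbit point $\gamma'x$ with $d(y_\gamma,\gamma'x)\le D$; then $d(x,\gamma'x)<r$, so $\Phi(\gamma):=\gamma'$ lands in the inner ball, which carries $N(r)$ orbit points. To bound a fibre, suppose $\Phi(\gamma_1)=\Phi(\gamma_2)=\gamma'$. Then $[x,\gamma_1x]$ and $[x,\gamma_2x]$ both pass within $2D$ of $\gamma'x$, so their Gromov product at $x$ is at least $r-O(D+\delta)$, and $\delta$-thinness forces them to $O(D+\delta)$-fellow-travel from $x$ up to parameter $\approx r$. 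Hence each endpoint $\gamma_ix$ lies within $R-r+O(D+\delta)$ of the common point $\gamma'x$, and $\Gamma$-invariance of the count gives a fibre cardinality at most $N\big(R-r+O(D+\delta)\big)$. This produces the recursion
\[
N(R)\ \le\ N(r)\,\Big(1+N\big(R-r+O(D+\delta)\big)\Big),
\]
valid precisely because $r\ge 10(D+\delta)$ keeps the slack $O(D+\delta)$ a controlled fraction of $r$, so that $y_\gamma$ stays well inside the inner ball.

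I would then iterate this recursion across a partition of the interval $[r,R]$ into $\approx R/r$ steps. Telescoping produces a bound of the shape $N(R)\lesssim N(r)^{R/r}$ up to the accumulated slack, and it is here that $\Ent(X,d)\le H$ enters: since $X$ is a proper geodesic (hence length) space carrying a cocompact action, Proposition \ref{Entropies1} makes the entropy a genuine limit, so $N(t)\le e^{(H+\e)t}$ for all large $t$, which bounds each factor by an exponential of rate governed by $H$. The coarseness of the two hyperbolic inputs — fellow-traveling and the passage orbit point $\to$ geodesic $\to$ orbit point — inflates this rate by a bounded multiplicative factor and leaves additive remainders; optimizing these against the constraint $r\ge 10(D+\delta)$ is what is designed to collapse to the stated factors $e^{6H(R-\frac45 r)}$ and $3\,(R/r)^{25/4}$.

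The step I expect to be the main obstacle is making the use of the entropy fully rigorous at the prescribed finite scales. The entropy of Definition \ref{Entropies0} is an asymptotic quantity, so the clean bound $N(t)\le e^{(H+\e)t}$ is only available for $t$ beyond some (uncontrolled) threshold, whereas the theorem must hold for every $R>r\ge 10(D+\delta)$. Bridging this gap requires exploiting the coarse self-similarity furnished by cocompactness — the counting function is, up to bounded multiplicative error, independent of the orbit base point — to transfer the slow growth guaranteed at large scales down to the fixed scales $r$ and $R$, and then to let the auxiliary large scale tend to infinity and $\e\to0$. Keeping the accumulated additive and multiplicative errors under control through this limiting process, so as to land exactly on the exponents $25/4$ and $6H$ rather than on merely comparable constants, is the delicate part of the argument.
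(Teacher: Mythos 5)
The paper does not actually prove this statement: Theorem \ref{cocompact2} is imported verbatim from \cite{BCGS} (Theorem 5.1), so there is no internal proof to compare against. Judged on its own terms, your proposal has a genuine gap, and it is exactly the one you flag at the end --- except that the fix you gesture at does not work. Your construction (the contraction $\Phi$ from the annulus into the inner ball, with fibres of cardinality at most $N(R-r+O(D+\delta))$) yields $N(R)\le N(r)\bigl(1+N(R-r+O(D+\delta))\bigr)$, and iterating it bounds $N(R)/N(r)$ by products of orbit counts at \emph{fixed finite scales}. But the hypothesis $\Ent(X,d)\le H$ is purely asymptotic: even granting (via Proposition \ref{Entropies1}) that the entropy is a limit, it gives $N(t)\le e^{(H+\e)t}$ only for $t$ beyond an uncontrolled threshold, and it places no upper bound whatsoever on $N(r)$ or $N(R-r+O(D+\delta))$ at the prescribed scales. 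Base-point independence of the counting function (your proposed bridge) concerns \emph{where} one counts, not \emph{at what scale}, so it cannot transfer the large-$t$ bound down to $t=r$; the recursion therefore never connects to the hypothesis and the proof cannot close.

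What is needed --- and what the proof in \cite{BCGS} supplies --- is an estimate in the opposite direction: a super-multiplicativity showing that if $N(R)/N(r)$ were too large, then propagating annular configurations outward (concatenating long geodesic segments) would force $N$ to grow at an exponential rate exceeding $H$ at infinity, contradicting $\Ent\le H$ in the limit. That outward propagation is where $\delta$-hyperbolicity is genuinely used (concatenations of long segments with controlled backtracking are quasi-geodesic, so the propagated orbit points land in the correct annuli with bounded multiplicity), and it is where the threshold $r\ge 10(D+\delta)$ and the constants $25/4$, $6H$, $\tfrac45$ arise. In your write-up, hyperbolicity enters only through a fellow-travelling claim inside the fibre bound, where the triangle inequality already suffices --- a sign that the argument is not using the key hypothesis where it is actually needed.
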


\begin{remark} 
In Theorem \ref{cocompact2} the inequality itself only depends on the upper bound of the entropy. The upper bounds of the hyperbolicity 
constant $\delta$ and of the diameter $D$ enter only in the computation of the minimal value of the radii of the balls which verify the Bishop-Gromov's inequality.
\end{remark}

We shall combine Theorem  \ref{cocompact2}  with the following result due to E.~Breuillard, B.~Green and T.~Tao (see \cite{BGT}, Corollary 1.7), 
 where $A\cdot B$ is the set of products $a.b$, where $a \in A$ and $b \in B\,$:

\begin{theorem}\label{BGTbasic}
For every $K \ge 1$, there exists universal constants $\nu (K),\  \nu' (K) \in \N^*$ (only depending on $K$) with the following properties: if $G$ is a group and 
if  $A$ and $B$ are 
finite non-empty subsets of $G$ such that $\# (A\cdot B) \le K (\# A)^{1/2} (\# B)^{1/2}$ there then exist a subgroup $G_0$ of $G$ 
and a finite normal subgroup $L$ of $G_0$ which verify:
\begin{itemize}
\item[(i)] $A$ is covered by $\nu (K)$ left-translates of $G_0$,
\item[(ii)] $G_0/L$ is nilpotent and admits a generating set with less than $\nu' (K)$ elements.
\end{itemize}
\end{theorem}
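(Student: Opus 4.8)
The plan is to recognise Theorem~\ref{BGTbasic} as the product-set reformulation of the structure theorem for finite approximate groups, and to prove it in three stages: first a Ruzsa-type reduction turning the hypothesis $\#(A\cdot B)\le K(\#A)^{1/2}(\#B)^{1/2}$ into genuine approximate-group control; second the extraction of a locally compact, indeed Lie, model; and third the passage from that model to the nilpotent subquotient $G_0/L$ together with the covering bound (i). The explicit constants $\nu(K),\nu'(K)$ will be produced only qualitatively, as functions of $K$ alone, uniformly over all groups $G$.

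\textbf{Step 1 (Ruzsa calculus).} The two-sided product hypothesis is a small-doubling condition in disguise. First I would symmetrise: replacing $A$ and $B$ by suitable products and inverses and applying the noncommutative Pl\"unnecke--Ruzsa inequalities (in the form developed by Tao, controlling all iterated product sets by the tripling constant), one produces a single set $S$, comparable in size to $A$ and to $B$ and covering each within $O_K(1)$ translates, which is a $K'$-\emph{approximate group} for some $K'=K^{O(1)}$: that is, $S=S^{-1}$, $1\in S$, and $S\cdot S$ is covered by $K'$ left-translates of $S$. The Ruzsa covering lemma then reduces conclusion (i) to finding a subgroup $G_0$ containing a bounded power of $S$, so that the entire difficulty is concentrated in understanding the internal structure of $S$.

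\textbf{Step 2 (Lie model).} This is the heart of the argument and the main obstacle. Here I would pass to an ultraproduct over all instances $(G,S)$ with the fixed constant $K'$, obtaining an \emph{ultra approximate group} inside a nonstandard group. The Hrushovski Lie model theorem --- whose proof rests on the Gleason--Yamabe solution of Hilbert's fifth problem, via no-small-subgroups and escape-norm arguments --- then yields a \emph{good model}: a homomorphism from the group generated by the ultra approximate group into a connected locally compact group $M$, under which the image of $S$ is precompact with nonempty interior and the kernel is suitably small. By Gleason--Yamabe, $M$ is locally a Lie group of bounded dimension. The non-effectivity of $\nu(K)$ and $\nu'(K)$ enters precisely here, since the ultraproduct/compactness argument is qualitative and yields no explicit formula.

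\textbf{Step 3 (from the model to nilpotence).} Finally I would induct on the dimension of $M$. The local Lie structure, transported back through the model by means of the Baker--Campbell--Hausdorff formula together with commutator and escape estimates, forces $S$ to be controlled by a \emph{coset nilprogression} of bounded rank and step. From this one reads off the data in the statement: the finite normal subgroup $L$ comes from the finite kernel of the model map, $G_0$ is the subgroup generated by the nilprogression, and $G_0/L$ is nilpotent and generated by a number of elements bounded by the rank, hence by $\nu'(K)$; unwinding the ultraproduct and combining with Step~1 returns the covering bound (i) with $\nu(K)$ translates, uniformly over all finite $(G,A,B)$. The main obstacle is unquestionably Step~2: every elementary reduction funnels into the structure theory of locally compact groups, and there is no avoiding an appeal to Hilbert's fifth problem, whereas Steps~1 and 3 are comparatively soft --- Ruzsa calculus on the one hand, bookkeeping with Lie algebras and nilprogressions on the other --- serving only to convert the abstract model into the concrete data $(G_0,L)$ demanded by (i) and (ii).
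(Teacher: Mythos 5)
The paper offers no proof of Theorem \ref{BGTbasic}: it is imported verbatim as Corollary 1.7 of Breuillard--Green--Tao \cite{BGT} and used as a black box. There is therefore nothing internal to compare your proposal against; the relevant comparison is with the argument in \cite{BGT} itself, and on that score your outline is faithful. The three-stage architecture you describe --- Ruzsa/Pl\"unnecke calculus converting the hypothesis $\#(A\cdot B)\le K(\#A)^{1/2}(\#B)^{1/2}$ into control by a $K^{O(1)}$-approximate group, the ultraproduct passage to Hrushovski's Lie model via Gleason--Yamabe, and the descent by induction on dimension to a coset nilprogression of bounded rank and step --- is exactly how the structure theorem and its Corollary 1.7 are established, and you correctly locate both the source of non-effectivity of $\nu(K),\nu'(K)$ (the compactness argument in Step 2) and the reason no elementary route is available. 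Two caveats. First, this is a roadmap rather than a proof: essentially all of the mathematical content is delegated to the Lie model theorem and to Hilbert's fifth problem, so as a self-contained verification it establishes nothing the citation does not already establish --- which is presumably why the authors of the present paper quote the result rather than reprove it. Second, a small imprecision in Step 3: the finite normal subgroup $L$ is not literally the kernel of the model homomorphism (which lives in the nonstandard group); it is the finite group $H$ appearing in the coset nilprogression $HP_0$, normalized by the nilprogression, and $G_0$ is the group generated by $HP_0$ with $G_0/L$ nilpotent of bounded rank. This does not affect the validity of the outline, but the bookkeeping in the final unwinding is where such slips would matter if one were writing the argument in full.
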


\subsection{A lower bound for the asymptotic displacement}

The notions of asymptotic (resp. minimal) displacement $\ell (\g)$ (resp. $s (\g)$) of any hyperbolic isometry is defined in Definitions \ref{deplacements}. Still denoting by $\nu (.)$ the universal nondecreasing integer valued function defined in Theorem \ref{BGTbasic}, we have the

\begin{theorem}\label{theo:minordeplacement}
Given $\delta , H , D > 0$, consider a $\delta$-hyperbolic metric space $(X , d) $ and a proper action by isometries of a non virtually cyclic group 
$\Gamma$ on $(X,d)$ such that the diameter of 
$\Gamma \backslash X$ and the entropy of $(X , d) $ are respectively bounded from above by $D$ and $H$, then every torsion-free element $\g \in \Gamma^*$
verifies $\ell (\g) > \dfrac{2 (5 D + \delta)}{\nu \big(3^5 e^{73 H (D + 4 \delta)}\big) + 2}$.
\end{theorem}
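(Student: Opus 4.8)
\emph{Plan.} I would argue by contradiction. Suppose there is a non-trivial torsion-free $\gamma\in\Gamma^*$ with $\ell(\gamma)\le \dfrac{2(5D+\delta)}{\nu(K)+2}$, where $K:=3^5\,e^{73H(D+4\delta)}$, and try to crowd enough elements of $\Gamma$ into a single ball so that Theorem \ref{BGTbasic} forces a virtually cyclic structure incompatible with the hypothesis that $\Gamma$ is not virtually cyclic. Because the action is proper with $\diam(\Gamma\backslash X)\le D<+\infty$, there are no parabolics, so every torsion-free element is hyperbolic; in particular $\ell(\gamma)>0$, $\gamma$ has a quasi-axis, and the stabilizer machinery of its pair of endpoints is available. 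The whole point is to bound $\ell(\gamma)$ away from $0$ quantitatively, and the explicit constants in the statement are exactly the ones produced by the two tools below.

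\emph{Step 1 (packing the powers of $\gamma$).} Fix the critical radius $r:=10(D+\delta)$, the threshold of Theorem \ref{cocompact2}, and choose a base point $x$ lying $O(\delta)$-close to a quasi-axis of $\gamma$. Using the estimates of Definitions \ref{deplacements} I would string the powers of $\gamma$ along this quasi-axis and obtain an \emph{additive} (not multiplicative) control of the form $d(x,\gamma^{i}x)\le i\,\ell(\gamma)+8\delta$ for every $i\ge 0$. The assumed bound on $\ell(\gamma)$ is tailored so that $(\nu(K)+1)\,\ell(\gamma)<2(5D+\delta)=10D+2\delta$, whence $d(x,\gamma^{i}x)<10D+2\delta+8\delta=r$ for $0\le i\le \nu(K)+1$; thus the $\nu(K)+2$ elements $\gamma^{0},\gamma^{1},\dots,\gamma^{\nu(K)+1}$ all lie in $\Sigma_{r}(x)$. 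Keeping the error additive here is the first subtle point: a naive per-step error of size $\delta$ would accumulate to $\approx\nu(K)\,\delta$ and ruin the estimate, which is why one must work along a genuine quasi-axis rather than at a point of near-minimal displacement.

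\emph{Step 2 (small doubling and Breuillard--Green--Tao).} Put $A:=B:=\Sigma_{r}(x)$. Since the action is free (torsion-free plus proper), the counting measure of a ball equals the number of the corresponding group elements, and by the triangle inequality $A\cdot B\subseteq\Sigma_{2r}(x)$. Applying Theorem \ref{cocompact2} with the radii $2r>r=10(D+\delta)$ gives
\[
\#(A\cdot B)\le \mu_x^{\Gamma}\big(B_X(x,2r)\big)<3\cdot 2^{25/4}\,e^{6H(2r-\frac45 r)}\,\mu_x^{\Gamma}\big(B_X(x,r)\big)\le K\,\#A ,
\]
because $3\cdot 2^{25/4}<3^{5}$ and $6H\cdot\frac65 r=72H(D+\delta)\le 73H(D+4\delta)$. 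As $\#A=\#B$, the hypothesis of Theorem \ref{BGTbasic} holds with this explicit $K$, so there are a subgroup $G_{0}\le\Gamma$ and a finite normal $L\trianglelefteq G_{0}$ with $A$ covered by $\nu(K)$ left-translates of $G_{0}$ and $G_{0}/L$ nilpotent. Torsion-freeness forces $L=\{1\}$, so $G_{0}$ is nilpotent; since $G_0$ acts properly on the $\delta$-hyperbolic space and (by the next sentence) contains a hyperbolic element, it is virtually cyclic, hence infinite cyclic, $G_{0}=\langle h\rangle$. Now pigeonhole the $\nu(K)+2$ powers of Step 1 into the $\nu(K)$ covering cosets: two of them lie in a common coset, so $\gamma^{m}\in G_{0}$ for some $1\le m\le\nu(K)+1$. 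Thus $\langle\gamma\rangle$ and $G_{0}$ are commensurable, $G_0$ does contain a hyperbolic element, and $\gamma$ fixes the same pair of boundary points $\{h^{+},h^{-}\}$ as $h$.

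\emph{Step 3 (the contradiction — the main obstacle).} It remains to convert \lq\lq$\gamma$ has a power in the infinite cyclic $G_{0}$, and $\Sigma_{r}(x)$ is covered by $\nu(K)$ cosets of $G_{0}$\rq\rq\ into a contradiction with $\Gamma$ being non virtually cyclic, and this is the hardest step. The difficulty is structural: a coset covering of a generating set does \emph{not} by itself bound $[\Gamma:G_{0}]$, so one cannot directly conclude that $\Gamma$ is virtually cyclic. Note that $\Sigma_{r}(x)$ does generate $\Gamma$, since $r\ge 2D$ (Proposition \ref{generatorsdiameter}), and that $\gamma$ lies in the maximal elementary subgroup $E=\mathrm{Stab}_{\Gamma}\{h^{+},h^{-}\}$, which is virtually cyclic. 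The plan is then to show that the whole ball $\Sigma_{r}(x)$ being squeezed into finitely many cosets of $G_{0}\subseteq E$ is incompatible with the presence of any element of $\Sigma_{r}(x)$ moving $\{h^{+},h^{-}\}$, so that $\Gamma=E$. Concretely I would try to use the finer output of \cite{BGT} — that $A$ is controlled by the nilpotent (here cyclic) $G_{0}$ on comparable scale — to bound the orbit growth $\#\Sigma_{nr}(x)\le\#A^{n}$ polynomially in $n$, and then invoke Gromov's polynomial growth theorem, specialized to proper actions on hyperbolic spaces, to force $\Gamma$ virtually cyclic; alternatively, re-applying Theorem \ref{cocompact2} at the scales $r$ and $2r$ together with the almost-malnormality of $E$ to show that a transverse direction would inflate $\#\Sigma_{2r}(x)/\#\Sigma_{r}(x)$ beyond the Bishop--Gromov bound $K$. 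Making either implication quantitative is where the real work lies, and is the step I expect to be the crux of the proof.
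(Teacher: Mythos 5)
Your Steps 1 and 2 reproduce the skeleton of the paper's argument (Proposition \ref{decrireG3}): additive control $d(x,\sigma^k x)\le |k|\,\ell(\sigma)+16\delta$ at a point $x$ on the axis (obtained from Lemma \ref{distgeod} and Lemma \ref{quasigeod} -- note the error is $16\delta$, not $8\delta$, which is one reason the paper works with the radius $10(D+4\delta)$ rather than your $10(D+\delta)$ and why the final constant is $73H(D+4\delta)$), then the Bishop--Gromov ratio bound of Theorem \ref{cocompact2} feeding the small-doubling hypothesis of Theorem \ref{BGTbasic}, then pigeonholing $\nu+2$ powers of $\sigma$ into the $\nu$ cosets of $G_0$ to get a nontrivial power of $\sigma$ inside the virtually cyclic $G_0$, hence $G_0\subset\Gamma_\sigma$ (the maximal virtually cyclic subgroup containing $\sigma$). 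Two small inaccuracies: only the element $\gamma$ is assumed torsion-free, not $\Gamma$, so you cannot conclude $L=\{1\}$ or that $G_0$ is infinite cyclic (the paper only uses that $G_0$ is virtually cyclic, via Proposition \ref{actioncocompacte}); and your radius $r=10(D+\delta)$ is too small both for the $16\delta$ error and, more importantly, for what Step 3 actually requires.

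The genuine gap is Step 3, as you yourself flag, and neither of your proposed completions is viable: the coset covering of a single ball gives no polynomial bound on $\#\Sigma_{nr}(x)$ (products of cosets of a non-normal subgroup are not controlled, so the Gromov polynomial-growth route does not start), and the ``inflate the doubling constant'' idea is not made precise. The paper's resolution is a \emph{second application of the same pigeonhole to conjugated powers}, and this is exactly why it runs the BGT step on the large ball $A=\Sigma_{4R}(x)$ with $R=\frac52(D+4\delta)$ rather than on $\Sigma_R(x)$: for any $g\in\Sigma_R(x)$, the triangle inequality gives $\sigma^k\in\Sigma_{2R}(x)\Rightarrow g^{-1}\sigma^k g\in\Sigma_{4R}(x)$, and the displacement estimate still supplies more than $\nu$ such conjugated powers inside $\Sigma_{4R}(x)$. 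Pigeonholing them into the same $\nu$ cosets of $G_0$ produces a nontrivial hyperbolic element $g^{-1}\sigma^{k-r}g\in G_0\subset\Gamma_\sigma$ which also lies in $g^{-1}\Gamma_\sigma g$; uniqueness of the maximal virtually cyclic subgroup containing a hyperbolic element (Lemma \ref{maximalcyclic}) forces $g^{-1}\Gamma_\sigma g=\Gamma_\sigma$, and Proposition \ref{actionelementaire} (v) then upgrades ``$\sigma$ and $g^{-1}\sigma g$ generate a virtually cyclic group'' to ``$\sigma$ and $g$ generate a virtually cyclic group'', i.e.\ $g\in\Gamma_\sigma$. Hence the whole generating set $\Sigma_R(x)$ lies in $\Gamma_\sigma$, so $\Gamma=\Gamma_R(x)$ (using $R\ge 2D$ and Proposition \ref{generatorsdiameter}) is virtually cyclic, the desired contradiction. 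This conjugation-plus-maximality mechanism is the missing idea; without it your argument stops at ``some power of $\gamma$ lies in $G_0$'' and cannot reach the group $\Gamma$ itself.
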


Before proving this Theorem, we first give the following definitions and state a lemma.

\begin{defis}\label{faisceau}
On a $\delta$-hyperbolic space $(X,d )$, for an hyperbolic isometry $\g$, we denote by $ \gamma^-$ and $ \gamma^+$ the fixed points of $\g$ 
(see their definition after Theorem \ref{ellparahyp}),
by ${\cal G} (\g)$ the set of geodesic lines such that $ c(-\infty) = \gamma^-$ and $ c(+ \infty) = \gamma^+$ and by $M(\g)$ the subset of 
$X$ obtained as the union of these geodesic lines.\\
We then define $M_{\rm{min}} (\g)$ as the closed non empty\footnote{The fact that this set is non empty and, by continuity, closed is proved in \cite{BCGS}, Lemma 8.34 (iv).} set of points of $X$ where the function $ x \f d(x, \g\, x)$ attains its infimum $ s(\gamma)$.
\end{defis}

 The point (i) of the following lemma is a consequence of Lemma 8.28 of \cite{BCGS}. We reproduce it for the sake of completeness.

\begin{lemma}\label{distgeod}
If $  \ell(\g) > 3\, \delta$, then 
\begin{itemize}
\item[(i)] every point $x \in M_{\rm{min}} (\g)$ satisfies $d \big(x , M(\g) \big) \le \frac{7}{2} \delta$,
\item[(ii)] every point $x \in M (\g)$ satisfies $d \big(x , M_{\rm{min}} (\g) \big) \le  \frac{15}{2} \delta$.
\end{itemize}
\end{lemma}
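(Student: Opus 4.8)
The plan is to treat the two statements separately, after recording the structural facts that the hypothesis $\ell(\gamma)>3\delta$ guarantees. This inequality ensures that $\gamma$ is genuinely hyperbolic, with two distinct fixed points $\gamma^-\neq\gamma^+$, so that both $M(\gamma)$ and $M_{\rm{min}}(\gamma)$ are nonempty; moreover both sets are $\gamma$-invariant, since $\gamma$ permutes the geodesics of $\mathcal{G}(\gamma)$ (it fixes their endpoints) and since $d(\gamma y,\gamma(\gamma y))=d(y,\gamma y)$ shows $\gamma$ preserves $M_{\rm{min}}(\gamma)$. The basic metric ingredient I would use throughout is that any two geodesic lines of $\mathcal{G}(\gamma)$, sharing the ideal endpoints $\gamma^\pm$, fellow-travel each other within $2\delta$ (thin bi-infinite triangles); this already shows $M(\gamma)$ has transverse diameter $\le 2\delta$, and explains the shape of the final constant $\tfrac{15}{2}\delta=\tfrac{7}{2}\delta+2\delta+2\delta$.

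For part (i) I would reproduce the argument behind Lemma 8.28 of \cite{BCGS}: take $x\in M_{\rm{min}}(\gamma)$ and let $c\in\mathcal{G}(\gamma)$ be any axis geodesic. The point is that a point lying at transverse distance $>\tfrac{7}{2}\delta$ from the whole bundle would have displacement strictly larger than $s(\gamma)$, because moving $x$ toward its nearest-point projection on $c$ strictly decreases $d(x,\gamma x)$ (the displacement function grows linearly in the transverse direction, up to the hyperbolicity error). Since $x$ minimizes the displacement, this forces $d\big(x,M(\gamma)\big)\le\tfrac{7}{2}\delta$.

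For part (ii), the real content, I would first show that the displacement is \emph{nearly constant} on $M(\gamma)$: for $x\in M(\gamma)$ on a geodesic $c$, the image $\gamma c$ is again a geodesic of $\mathcal{G}(\gamma)$, so it stays within $2\delta$ of $c$ and $\gamma$ acts as a coarse translation along $c$; this yields $\big|d(x,\gamma x)-\ell(\gamma)\big|=O(\delta)$. Combined with the general inequalities $\ell(\gamma)\le s(\gamma)\le d(x,\gamma x)$, it follows that $s(\gamma)\le\ell(\gamma)+O(\delta)$ and that \emph{every} point of $M(\gamma)$ is an almost-minimizer, i.e. $d(x,\gamma x)\le s(\gamma)+O(\delta)$. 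Then, fixing a genuine minimizer $p\in M_{\rm{min}}(\gamma)$, I would form the $\gamma$-invariant broken path $\Pi:=\bigcup_{n\in\Z}[\gamma^n p,\gamma^{n+1}p]$ joining $\gamma^-$ to $\gamma^+$: since $\Pi$ and the geodesic $c$ through $x$ share ideal endpoints, they fellow-travel within $2\delta$, so there is $z\in\Pi$, lying on some segment $[\gamma^n p,\gamma^{n+1}p]$, with $d(x,z)\le 2\delta$. The final $2\delta$ comes from the quasiconvexity of $M_{\rm{min}}(\gamma)$: the geodesic segment between the two minimizers $\gamma^n p,\gamma^{n+1}p$ stays within $2\delta$ of $M_{\rm{min}}(\gamma)$, whence $d\big(x,M_{\rm{min}}(\gamma)\big)\le \tfrac{7}{2}\delta+2\delta+2\delta=\tfrac{15}{2}\delta$, where the $\tfrac72\delta$ enters through (i) when relating the orbit of $p$ to the bundle.

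The main obstacle I anticipate is precisely this last step, namely showing that $M_{\rm{min}}(\gamma)$ is coarsely dense \emph{along the axis direction} rather than only at the spaced-out orbit points $\gamma^n p$ (whose consecutive gaps are of size $s(\gamma)\approx\ell(\gamma)$, a priori much larger than $\delta$). Filling these gaps cannot be done with the single orbit alone; it requires the quasiconvexity estimate for $M_{\rm{min}}(\gamma)$ with an explicit $2\delta$ constant, which is where the genuine $\delta$-hyperbolic geometry (thin triangles together with the near-constancy of the displacement) and the hypothesis $\ell(\gamma)>3\delta$ are used in an essential way. Keeping all the hyperbolicity errors at the sharp values needed to land exactly on $\tfrac72\delta$ and $\tfrac{15}{2}\delta$ is the delicate part of the bookkeeping.
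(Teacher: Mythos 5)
Your overall strategy for (ii) --- compare the axis geodesic $c$ with the broken orbit path $\Pi=\bigcup_{n}[\g^n p,\g^{n+1}p]$ of a minimizer $p$, using (i) to relate the orbit to the bundle --- is the same as the paper's, but two of your key steps are unjustified and the constant bookkeeping does not correspond to a valid chain of inequalities. First, the claim that $\Pi$ and $c$ fellow-travel within $2\delta$ because they share ideal endpoints is false as stated: that $2\delta$ applies to pairs of \emph{genuine geodesics} with the same endpoints at infinity, whereas $\Pi$ is only a broken (local) geodesic whose pieces have length $\approx s(\g)$; stability of such paths needs a separate argument and yields a different constant. The paper avoids this by working segment-by-segment: it first shows that the projections $c(t_k)$ of the orbit points $\g^k p$ onto $c$ are cofinal, so that $\R=\bigcup_k[t_k,t_{k+1}]$, then bounds $d(\g^k p,c)\le\frac{11}{2}\delta$ (namely $\frac{7}{2}\delta$ from (i) plus $2\delta$ because every point of $M(\g)$ is $2\delta$-close to $c$), and finally applies the quasi-convexity Lemma \ref{convexity}(ii) to the pair of geodesic segments $[\g^k p,\g^{k+1}p]$ and $[c(t_k),c(t_{k+1})]$, which costs one more $2\delta$ and lands on $\frac{11}{2}\delta+2\delta=\frac{15}{2}\delta$. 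In your accounting the $\frac{7}{2}\delta$ floats free: if $d(x,\Pi)\le 2\delta$ were really available you would not need it at all, and conversely it must enter \emph{inside} the estimate of the distance from $c$ to the segments, not as a separate additive term. Second, the ``$2\delta$-quasiconvexity of $M_{\rm{min}}(\g)$'' you invoke at the end is asserted, not proved; the paper does not need it because Lemma \ref{geodmin} gives the stronger fact that the segments $[\g^k p,\g^{k+1}p]$ are actually \emph{contained} in $M_{\rm{min}}(\g)$.

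For (i), deferring to Lemma 8.28 of \cite{BCGS} is legitimate (the paper does the same), but the mechanism you describe (``the displacement grows linearly in the transverse direction'') is only a heuristic and cannot produce the constant $\frac{7}{2}\delta$. The actual argument uses $\ell(\g)>3\delta$ to locate two consecutive projections $c(t_p),c(t_{p+1})$ at distance $>3\delta$, applies Lemma \ref{ecartement} to obtain $s(\g)=d(x,\g x)\ge 2\,d\big(x,M(\g)\big)+\ell(\g)-6\delta$, and concludes with $s(\g)\le\ell(\g)+\delta$ from Lemma \ref{quasigeod}; this is also the place where the hypothesis $\ell(\g)>3\delta$ is genuinely used, rather than in the quasiconvexity step where you locate it.
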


\begin{proof}[Proof of Lemma \ref{distgeod}]
Let us consider a geodesic $ c \in \cal{G} (\g)$, oriented from $\g^-$ to $\g^+$, and a point $x \in M_{\rm{min}} (\g)$. For every 
$ k \in \Z$, we  denote by $c (t_k)$ a projection of the point $\g^k x$ on the image of the geodesic $c$. For every $k \in \Z$,
let $c (t_{k}')$ be a projection of $\g^k (  c (t_{0} )  )$ on the image of the geodesic $c$, then Proposition 8.10 (i) 
of \cite{BCGS} and the fact that $c$ and $ \g^k \circ \, c$ are two geodesics from $\gamma^-$ to $\gamma^+$ ensure that 
$d \left( \g^k c (t_{0} ) \,,\, c (t_{k}')\right) \le 2\, \delta$, and this yields:
\begin{equation}\label{distac}
d \left( \g^k x , c (t_{k})\right) \le  d \left( \g^k x , c (t_{k}')\right) \le d \left( \g^k x , \g^k c (t_{0} ) \right) + 
d \left( \g^k c (t_{0} ) \,,\, c (t_{k}')\right) \le d (x, c ) + 2 \delta \, .
\end{equation}
As $ \g^+$ (resp. $ \g^-$) is the limit of $\g^k x$ when $k \to +\infty$, it follows from \eqref{distac} that $c(t_k)$ goes to $ \g^+$ when $k \to +\infty$ 
and to $ \g^-$ when $k \to -\infty$; this yields
\begin{equation}\label{distac1}
t_k \to +\infty \text{ when } k \to +\infty \ \ ,\ \ t_k \to - \infty \text{ when } k \to -\infty \ \ \text{ hence } \ \ \R = \cup_{k \in \Z} [t_k , t_{k+1}] \, .
\end{equation}

Let us now prove that
\begin{equation}\label{shift}
 \forall \e > 0 \quad \exists p \in \N  \ \text{ such that } \, d \big( c (t_p) , c (t_{p+1}) \big) > \ell (\g) -  \frac{1}{2}\,\e\ .
\end{equation}
Indeed, from \eqref{distac}, we deduce that 
$$| d \big(  c (t_{0})  ,  c (t_{k}) \big) - d \big( x , \g^k x \big)| \le  d \big( x , c (t_{0})\big) + 
d \left( \g^k x , c (t_{k})\right) \le  2\, d \big(x, {\rm Im}(c) \big) + 2\, \delta\,,$$ 
thus that
$$ \lim_{k \f +\infty} \left(\frac{1}{k}\, \sum_{i = 0}^{k - 1} d \big(  c (t_{i})  ,  c (t_{i+1}) 
\big) \right) \ge \lim_{k \f +\infty} \left( \frac{1}{k}\,  d \big(  c (t_{0})  ,  c (t_{k}) \big) \right)
= \lim_{k \f +\infty} \left(  \frac{1}{k} \, d \big( x , \g^k x \big)\right) = \ell(\g) \,;$$
a consequence is that $ \sup_{p \in \N} \  d \big(  c (t_{p})  ,  c (t_{p+1}) \big) \ge  \ell(\g)$, and this proves property \eqref{shift}.

\smallskip
When $ \ell (\g) > 3 \,\delta $, choosing $\e$ small enough, property \eqref{shift} implies the existence of some $p\in \N$ such that 
$d \big(c (t_p) , c (t_{p+1}) \big) > \ell (\g) -  \frac{1}{2}\,\e > 3 \,\delta $, and, for every $k \in \Z$, it follows from this and from Lemma 
\ref{ecartement} that
$$ d \left( x ,  \g \, x\right) = d \left( \g^{p} x ,  \g^{p +1} x\right) \ge  d \left( \g^{p} x ,  c (t_p)\right) + 
d \big(c (t_p) , c( t_{p +1})\big)  + d \left( c( t_{p +1}) , \g^{p +1} x \right) - 6\, \delta$$
\begin{equation*}
>  d \left( \g^{k} x ,  \g^{k-p} c (t_p)\right) + \ell (\g) -  \frac{1}{2}\,\e  + d \left( \g^{k} x , \g^{k-p-1} c( t_{p +1}) \right) - 6\, \delta\, .
\end{equation*}
Letting $\e \to 0$  in this inequality and noticing that the geodesics $\g^{k-p}\circ \, c $ and $\g^{k - p -1}\circ \, c $  belong to $\cal{G} (\g)$ we deduce
that $ s(\g) = d \left( x ,  \g \, x\right) \ge 2 \, d \big( \g^{k} x,  M(\g) \big) + \ell(\g) - \,6\, \delta$, hence that
\begin{equation}\label{distgeod1}
\forall k \in \Z \ \ \ \  d \big(\g^{k}x, M(\g) \big) \le \frac{1}{2} \big( s (\g)- \ell(\g)\big) + 3\,\delta \le \frac{7}{2} \delta
\end{equation}
where the last inequality is a consequence of Lemma \ref{quasigeod}. This proves (i) when letting $k=0$ in \eqref{distgeod1}.\\

Since, by Proposition 8.10 (i) of \cite{BCGS}, every point $y$ of $M(\g)$ verifies $d(y, c ) \le 2 \delta$ we get that $d \big(\g^{k}x, c \big) \le 
d \big(\g^{k}x, M(\g) \big) + 2 \delta$. From this and from \eqref{distgeod1} we deduce that
\begin{equation}\label{distgeod2}
\forall k \in \Z\,, \quad  d \big(\g^{k}x, c \big) \le \frac{1}{2} \big( s (\g)- \ell(\g)\big) + 5\,\delta \le \frac{11}{2} \delta\,.
\end{equation}
Applying the convexity Lemma \ref{convexity} (ii) to the two geodesics $[\g^{k}x , \g^{k+1}x ]$ and $[c(t_k) , c(t_{k+1}]$, we obtain that, for every point
$u = c(t) \in [c(t_k) , c(t_{k+1}]$, there exists $v \in [\g^{k}x , \g^{k+1}x ] := \g^{k} ([x , \g x])$ such that 

$$ d(c(t) , v) \le \Max \big( d(\g^{k}x , c(t_k) ) \, ,\, d(\g^{k+1}x , c(t_{k+1}) ) \big) + 2 \delta= \dots$$
$$\,\,\dots =\Max \big( d(\g^{k}x , c)\, ,\, d(\g^{k+1}x , c ) \big)  + 2 \delta\le \frac{15}{2} \delta \,,$$

where the last inequality is a consequence of \eqref{distgeod2}. As, by lemma \ref{geodmin}, $ [\g^{k}x , \g^{k+1}x ] \subset M_{\rm{min}} (\g)$,
we get that $d \big( c(t) ,  M_{\rm{min}} (\g) \big) \le  \frac{15}{2} \delta $ for every $ t \in [t_k, t_{k+1}]$ and every $k \in \Z$, and then that 
$d \big( c(t) ,  M_{\rm{min}} (\g) \big) \le  \frac{15}{2} \delta $ for every $ t \in \cup_{k \in \Z} [t_k , t_{k+1}] = \R$ by \eqref{distac1}.
Since this is verified for every geodesic $ c \in \cal{G} (\g)$, one has $d \big(y , M_{\rm{min}} (\g) \big) \le \frac{15}{2} \delta $ for every point $y \in M (\g)$, 
which ends the proof of (ii).
\end{proof}

The main argument in order to prove Theorem \ref{theo:minordeplacement} is the following

\begin{prop}\label{decrireG3}
Let $(X , d) $ be a $ \delta$-hyperbolic metric space, and a group $\Gamma$ acting on $(X,d)$ properly by isometries, such that the diameter of 
$\Gamma \backslash X$ and the entropy of $(X , d) $ are respectively bounded from above by $D$ and $H$, then, for every 
$R \ge \frac{5}{2}  (D + 4\delta)$, if there exists a torsion-free element $\sigma$ of $\Gamma^*$ such that 
$\ell(\sigma) \le \dfrac{2(2 R - 19 \delta)}{\nu \big(3^5 e^{29 H R}\big) + 2}$, then $\Gamma_{R} (x)$  is virtually cyclic at every point 
$x \in M(\sigma)$.
\end{prop}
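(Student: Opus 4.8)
The plan is to exhibit $\Sigma_{4R}(x)$ as a set of small doubling, feed it into the Breuillard--Green--Tao structure theorem, and then use the hypothesis on $\ell(\sigma)$ to force the resulting virtually nilpotent subgroup to contain a power of $\sigma$; the geometry of the $\delta$-hyperbolic space should then confine everything to the stabilizer of $\{\sigma^-,\sigma^+\}$, which is virtually cyclic. First I set $A:=\Sigma_{4R}(x)$ and note that it is symmetric, contains $\id$, and satisfies $A\cdot A\subseteq\Sigma_{8R}(x)$ by the triangle inequality $d(x,\gamma_1\gamma_2 x)\le d(x,\gamma_1 x)+d(x,\gamma_2 x)$. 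Since $\#\Sigma_r(x)=\mu_{x}^{\Gamma}\big(\overline B_X(x,r)\big)$ (the counting measure counts group elements, so stabilizers cause no trouble), small doubling reduces to a Bishop--Gromov estimate. The hypothesis $R\ge\frac52(D+4\delta)$ is exactly what makes $4R=10(D+4\delta)\ge 10(D+\delta)$, so Theorem \ref{cocompact2} applies at the radii $4R<8R$ and gives $\#(A\cdot A)\le\#\Sigma_{8R}(x)<3\cdot 2^{25/4}e^{6H(8R-\frac45\cdot4R)}\,\#A\le 3^5 e^{29HR}\#A$; thus $A$ has doubling constant $K:=3^5 e^{29HR}$.

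Next I apply Theorem \ref{BGTbasic} with $A=B=\Sigma_{4R}(x)$ and this $K$, obtaining a subgroup $G_0\le\Gamma$, a finite normal $L\triangleleft G_0$ with $G_0/L$ nilpotent, and a covering $\Sigma_{4R}(x)\subseteq\bigcup_{i=1}^{\nu(K)}g_i G_0$. Now I bring in $\sigma$: since $x\in M(\sigma)$, Lemma \ref{distgeod} together with the description of $M(\sigma)$ yields a bound $d(x,\sigma^n x)\le |n|\,\ell(\sigma)+c\,\delta$, so $\sigma^n\in\Sigma_{4R}(x)$ for every integer $n$ with $|n|\le(4R-c\delta)/\ell(\sigma)$. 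The bound $\ell(\sigma)\le\frac{2(2R-19\delta)}{\nu(K)+2}$ is calibrated precisely so that the number of such integers exceeds $\nu(K)$. By the pigeonhole principle two distinct powers $\sigma^{n_1},\sigma^{n_2}$ fall in a common translate $g_iG_0$, whence $\sigma^{n_1-n_2}\in G_0$ with $n_1\ne n_2$. In particular $G_0$ is infinite and contains a nontrivial power $\sigma^{m}$ of $\sigma$.

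It remains to pass from this local structure to virtual cyclicity of $\Gamma_R(x)$. The easy half is that $G_0$ itself is virtually cyclic: it is infinite, virtually nilpotent (as $L$ is finite and $G_0/L$ nilpotent), acts properly on the $\delta$-hyperbolic $X$, and contains the hyperbolic isometry $\sigma^{m}$; an infinite amenable subgroup of the isometry group of a proper $\delta$-hyperbolic space containing a hyperbolic element is elementary, so its limit set is $\{\sigma^-,\sigma^+\}$, it lies in the stabilizer $E$ of that pair, and $E$ is virtually cyclic. Because $\Sigma_R(x)\subseteq\Sigma_{4R}(x)=A$, it then suffices to prove $A\subseteq E$, for then $\Gamma_R(x)=\langle\Sigma_R(x)\rangle\le\langle A\rangle\le E$ is virtually cyclic.

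The inclusion $A\subseteq E$ is the hard step and the main obstacle. The covering only tells us that each $a\in A$ lies in some coset $g_iE$, and covering a generating set by finitely many cosets of $E$ does not by itself bound the index (the free group shows this), so I must use the smallness of $\ell(\sigma)$ a second time, now geometrically. For $a\in A$ the conjugate $a\sigma a^{-1}$ is hyperbolic with axis $a\,M(\sigma)$ passing within $4R+O(\delta)$ of the point $x\in M(\sigma)$ and with the same tiny translation length $\ell(\sigma)$; I expect to prove, by a fellow-traveling (ping-pong) argument, that two hyperbolic isometries with such small translation lengths whose axes stay this close must share their endpoints, which forces $a\{\sigma^-,\sigma^+\}=\{\sigma^-,\sigma^+\}$, i.e. $a\in E$. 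Making this alignment quantitative and compatible with the Bishop--Gromov bound of Theorem \ref{cocompact2} is the crux of the argument, and is what makes the proof of this proposition the most delicate in the paper.
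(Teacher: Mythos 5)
Your first two-thirds coincide with the paper's proof: the doubling bound $\#(A\cdot A)\le 3^5e^{29HR}\,\#A$ via Theorem \ref{cocompact2}, the application of Theorem \ref{BGTbasic}, the displacement estimate $d(x,\sigma^k x)\le |k|\,\ell(\sigma)+16\delta$ coming from Lemma \ref{distgeod} and Lemma \ref{quasigeod}, the pigeonhole argument putting a nontrivial power $\sigma^{p-q}$ into $G_0$, and the identification of $G_0$ as a virtually cyclic subgroup contained in the maximal virtually cyclic subgroup $\Gamma_\sigma$. The problem is the last step, which you explicitly leave open (``I expect to prove\dots''), and the route you sketch for it would not work. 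The assertion that two hyperbolic isometries with small translation length whose axes pass within $4R+O(\delta)$ of a common point must share their endpoints is false: $4R\ge 10(D+4\delta)$ is a \emph{large} constant, and closeness of two axes near one point gives no fellow-travelling over infinite length, which is what a common pair of fixed points would require. In $\mathbf H^2$, two hyperbolic isometries with arbitrarily small translation lengths and crossing axes generate a non-elementary group, so no purely geometric statement of the kind you propose can be true; the extra input has to come from the coset structure again.

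What the paper actually does at this point is to run the pigeonhole argument a \emph{second} time, on the conjugates. For $g\in\Sigma_R(x)$ (radius $R$, not $4R$ --- this is exactly why the conclusion concerns $\Gamma_R(x)$ only), the triangle inequality gives $\sigma^k\in\Sigma_{2R}(x)\Rightarrow g^{-1}\sigma^k g\in\Sigma_{4R}(x)$; the hypothesis $\ell(\sigma)\le \frac{2(2R-19\delta)}{\nu+2}$ is calibrated for the count at radius $R'=2R$ (not $4R$, where you used it), so more than $\nu$ of the elements $g^{-1}\sigma^k g$ land in $\bigcup_i \g_i\cdot G_0\cap\Sigma_{4R}(x)$ and two of them share a coset, producing a nontrivial hyperbolic element $g^{-1}\sigma^{k-r}g\in G_0\subset\Gamma_\sigma$. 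Since this element also lies in $g^{-1}\Gamma_\sigma g$, the uniqueness of the maximal virtually cyclic subgroup containing a given hyperbolic element (Lemma \ref{maximalcyclic}) forces $g^{-1}\Gamma_\sigma g=\Gamma_\sigma$, and Proposition \ref{actionelementaire} (v) then upgrades this to $g\in\Gamma_\sigma$. Note also that your stated target $\Sigma_{4R}(x)\subseteq E$ is stronger than needed and is not what this mechanism yields; only $\Sigma_R(x)\subseteq\Gamma_\sigma$ is obtained, and that suffices since $\Gamma_R(x)=\langle\Sigma_R(x)\rangle$. As written, your argument establishes the setup but not the proposition.
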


\begin{proof}
Given $\delta, H , D$ and $R \ge \frac{5}{2} (D + 4\delta)$, let us denote by $\nu$ the integer $\nu \big(3^5 e^{29 H R}\big)$, for the sake of
simplicity. Let $\sigma$ be an element of $\Gamma^*$ such that $0 <\ell(\sigma) \le \frac{2(2 R - 19 \delta)}{\nu + 2} $, choose a point 
$x \in M(\sigma)$. 
We note that $M(\sigma) =  M(\sigma^k) $ for every $k \in \Z^*$.
By Lemma \ref{distgeod} (ii), for every $k$ such that $ \ell(\sigma^k) > 3 \delta$, there exists a point $x_k \in M_{\rm{min}} (\sigma^k)$ such that 
$d(x, x_k) \le \frac{15}{2} \delta$. The triangle inequality and Lemma \ref{quasigeod} then imply that 
\begin{equation}\label{controledistance}
d(x , \sigma^k  x) \le d(x_k , \sigma^k x_k) + 15 \delta = s (\sigma^k) + 15 \delta \le \ell (\sigma^k) + 16 \delta = |k| \,\ell(\sigma) + 16 \delta\,.
\end{equation}

Let us define $A = B := \Sigma_{4 R} (x) = \{\g \in \Gamma : d(x, \g x) \le  4 R \}$ and $A\cdot B=A \cdot A :=\{ a.b : a\in A\,\,{\rm and}\,\, b\in B\}$. We wish to bound $\#A\cdot A$ using Theorem \ref{cocompact2}. 

Indeed,  for $\eta >0$, Theorem \ref{cocompact2} implies that
$$ \dfrac{\# (A\cdot B)}{(\# A)^{1/2} (\# B)^{1/2}} = \dfrac{\# (A\cdot A)}{\# A} \le
\dfrac{\mu_{x}^{\Gamma} \big( B_X ( x , 8 (R + \eta))\big)}{\mu_{x}^{\Gamma} \big( B_X ( x , 4 R )\big)} \le 3 \left( \frac{2 (R + \eta)}{R} \right)^{25/4} e^{48 H (\frac{3}{5} R + \eta)} \,.$$
Taking the limit when $\eta$ goes to zero, we deduce that 
$$\dfrac{\# (A\cdot B)}{(\# A)^{1/2} (\# B)^{1/2}} \le 3 \cdot 2^{25/4} e^{29 H R}
\le 3^5 e^{29 H R}\,.$$

We may then apply Theorem \ref{BGTbasic} with $K=3^5 e^{29 H R}$, and get the following consequences: there exist a virtually nilpotent group $G_0$, and a subset $S := \{\g_1, \ldots , \g_{\nu}\}$ of $\Gamma$, with $\nu (K) = \nu$ elements such that $\Sigma_{4 R} (x) \subset \bigcup_{i = 1}^{\nu} \g_i \cdot G_0$. Notice that $\g_i \cdot G_0$ and $\g_j \cdot G_0$ are either disjoint or equal.

Since $\sigma$ is torsion-free and the action is co-compact, the hypothesis (ii) of Lemma \ref{maximalcyclic} is verified, hence $\sigma$ is  
an hyperbolic isometry and is contained in a 
unique maximal virtually cyclic subgroup of $\Gamma$, denoted by $\Gamma_\sigma$. For every $R' > 19 \delta$, we deduce from \eqref{controledistance} 
that $ |k|\, \ell(\sigma) \le R'- 16 \delta$ implies $d(x, \sigma^k \, x)  \le R' $, which implies that $\sigma^k \in \Sigma_{R'} (x)$, this yields
\begin{equation}\label{controledistance0}
\# \left\{k : \sigma^k \in \Sigma_{R'} (x) \right\}\ge  \# \{k : 3 \delta < |k|\, \ell(\sigma) \le R'- 16 \delta \} > 2 \left( \frac{R' - 19 \delta}{\ell (\sigma)} -1 \right)\, .
\end{equation}
The hypothesis $\ell(\sigma) \le  \dfrac{2(2 R - 19 \delta)}{\nu + 2} $ and \eqref{controledistance0}, where we set $R' = 4 R$, imply that 
$$ \# \left\{k :  \sigma^k \in \Sigma_{4 R} (x) \right\} > 2 
\left( \frac{4 R - 19 \delta}{\ell (\sigma)} -1 \right) \ge  2 \left( \frac{4 R - 19 \delta}{2(2 R - 19 \delta)} (\nu + 2) -1 \right) >  \nu \, .$$
It follows that $\# \left\{k :  \sigma^k \in \bigcup_{i = 1}^{\nu} \g_i \cdot G_0 \cap \Sigma_{4 R} (x)  \right\} > \nu$, hence that 
there exist $p,q \in \Z$, with $p\ne q$, such that $\sigma^p,\, \sigma^q \in \Sigma_{4 R} (x)$ and such that $\sigma^p$ and 
$\sigma^q$ are two distinct elements of the same coset $\g_i . G_0$, for some $\g_i \in S$. A consequence is that $\sigma^{p-q}$ is a non trivial torsion-free element of $ G_0$. As $\Gamma$ acts co-compactly on a Gromov-hyperbolic space, 
the virtually nilpotent subgroup $G_0$ is virtually cyclic by Proposition \ref{actionelementaire} (iii) and \ref{actioncocompacte} 
(iv). As $G_0$ is a virtually cyclic subgroup containing $\sigma^{p-q}$ and as $\Gamma_\sigma$ is the maximal virtually cyclic subgroup containing 
$\sigma^{p-q}$, we have $G_0 \subset \Gamma_\sigma$.

For an element $g$ of $\Sigma_{R} (x) $, then $ g^{-1}\sigma \,g$ lies in the maximal virtually cyclic subgroup $g^{-1} \Gamma_\sigma \,g $.
On the other hand, the triangle inequality and the $\Gamma$-invariance of the distance give
$$  \sigma^k \in \Sigma_{2 R} (x) \implies   \sigma^k \in \Sigma_{4 R} (g x) \iff g^{-1}\sigma^k \,g \in \Sigma_{4 R} (x) \, .$$

The hypothesis on $R$ implies that $ 2 R > 19 \delta$ then, the above implication and the inequality \eqref{controledistance0}, where we set $R' = 2 R $, yield:
$$\# \left\{k :  g^{-1}\sigma^k \,g \in \Sigma_{4 R} (x) \right\} \ge  \# \left\{k : \sigma^k \in \Sigma_{2 R} (x)  \right\} > 
2 \left( \frac{2 R - 19 \delta}{\ell (\sigma)} -1 \right)  \, .$$
From this last inequality and from the hypothesis $\ell(\sigma) \le  \dfrac{2(2 R - 19 \delta)}{\nu + 2}$, it comes that 

$$\# \big\{k : g^{-1}\sigma^k \,g  \in \bigcup_{i = 1}^{\nu} \g_i \cdot G_0 \cap \Sigma_{4 R} (x)  \big\} = \# \left\{k :  g^{-1}\sigma^k \,g \in \Sigma_{4 R} (x) \right\} > \nu\,.$$

It follows that there exist $k, r \in \Z$ (with $k \ne r$) such that $g^{-1} \sigma^k g$ and $g^{-1} \sigma^r g$  belong to $\Sigma_{4 R} (x)$ and are two
distinct elements of the same coset $\g_i . G_0$, for some $\g_i \in S$. Mimicking the previous proof, we obtain that $g^{-1} \sigma^{k-r} g$ is a non trivial hyperbolic element of $G_0 \subset \Gamma_\sigma$ and, as it is also an hyperbolic element of $g^{-1} \Gamma_\sigma g $, the two maximal virtually cyclic subgroups $\Gamma_\sigma $ and 
$g^{-1} \Gamma_\sigma g $, both containing $g^{-1} \sigma^{k-r} g$, coincide by Lemma \ref{maximalcyclic}.

As $\sigma$ and $g^{-1} \sigma g$ are both contained in $\Gamma_\sigma$ and are both hyperbolic, they generate a virtually cyclic subgroup 
of $\Gamma$ and
Proposition \ref{actionelementaire} (v) then implies that $\sigma$ and $g$ generate a virtually cyclic subgroup of $\Gamma$ which is thus included in the maximal
virtually cyclic subgroup $\Gamma_\sigma$ containing $\sigma$. It follows that every $g \in \Sigma_{R} (x) $ belongs to $\Gamma_\sigma$, hence that 
$\Gamma_{R} (x)$ 
is included in $\Gamma_\sigma $, and is thus virtually cyclic too.
\end{proof}

\begin{proof}[End of the proof of Theorem \ref{theo:minordeplacement}] 
Arguing by contradiction, let us suppose that there exists a torsion-free element $\sigma$ of $\Gamma^*$ such that 
$\ell(\sigma) \le  \dfrac{2 (5 D + \delta)}{\nu \big(3^5 e^{73 H (D + 4 \delta)}\big) + 2}$. We then have that  $\ell(\sigma) \le \dfrac{2(2 R - 19 \delta)}{\nu \big(3^5 e^{29 H R}\big) + 2}$, with $R = \frac{5}{2}  (D + 4\delta)$ and, applying Proposition \ref{decrireG3}, there exists a point $x \in X$ such that 
$\Gamma_{R} (x)$ is virtually cyclic for every $R < \frac{5}{2}  (D + 4\delta)$. Hence $\Gamma_{2 D} (x)$ is virtually cyclic and, as $\Gamma_{2 D} (x) = 
\Gamma$ by Proposition \ref{generatorsdiameter}, $\Gamma$ is virtually cyclic, a contradiction.
\end{proof}

\section{Finiteness results for groups}\label{sec:finiteness}

\emph{This section is devoted to the proof of the main theorem. According to Remark \ref{rem:bound} we just need to find a generating set of the $\delta$-hyperbolic groups that we consider whose cardinality is bounded above in terms of the datas. This is the goal of the next subsection.}

\subsection{A bound on the number of generators}

\begin{prop}\label{Nbgenerators}
Given $\delta, H, D > 0$, there exists a constant $N (\delta, H, D)$ with the following property: let $\Gamma$ be a non-cyclic torsion-free group acting properly and by isometries on a $\delta$-hyperbolic space $(X,d)$ verifying $\Ent (X, d) \le H$ 
and $\diam (\Gamma \backslash X) \le D$  then, for every $x \in X$, the generating set
$\Sigma_{10 (D + \delta)} (x)$ of $\Gamma$ has less than $N (\delta, H, D)$ elements.
\end{prop}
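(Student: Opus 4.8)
The plan is to bound $\#A$, where $A := \Sigma_{10(D+\delta)}(x)$, by feeding the Bishop--Gromov inequality (Theorem \ref{cocompact2}) into the Breuillard--Green--Tao structure theorem (Theorem \ref{BGTbasic}) and then using the displacement estimate (Theorem \ref{theo:minordeplacement}) to turn the resulting coset covering into an honest count. First I would record that a torsion-free virtually cyclic group is either trivial or infinite cyclic; hence a non-cyclic torsion-free $\Gamma$ is never virtually cyclic, so Theorem \ref{theo:minordeplacement} applies. Since $\Gamma$ is torsion-free, every $\sigma\in\Gamma^*$ is torsion-free and therefore satisfies $\ell(\sigma) > L_0$, where $L_0 := \frac{2(5D+\delta)}{\nu\big(3^5 e^{73H(D+4\delta)}\big)+2}$ depends only on $\delta,H,D$.

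Next, set $r_0 := 10(D+\delta)$ and $A := \Sigma_{r_0}(x)$, a finite (by properness) symmetric set containing the identity. Since $A\cdot A \subseteq \Sigma_{2r_0}(x)$, applying Theorem \ref{cocompact2} with radii $2r_0 > r_0 \ge 10(D+\delta)$ (via the same $\eta\to 0$ limiting argument used in Proposition \ref{decrireG3}) gives $\#(A\cdot A)/\#A \le K$ with $K := 3\cdot 2^{25/4} e^{72H(D+\delta)}$. Applying Theorem \ref{BGTbasic} with $B=A$ to this $K$ then produces a subgroup $G_0 \le \Gamma$ and a finite normal subgroup $L \triangleleft G_0$ such that $A$ is covered by $\nu(K)$ left-translates $g_1 G_0,\dots,g_{\nu(K)} G_0$, while $G_0/L$ is finitely generated and nilpotent.

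The crux is then to bound $\#(A\cap gG_0)$ uniformly. Because $\Gamma$ is torsion-free the finite group $L$ is trivial, so $G_0$ is a finitely generated torsion-free nilpotent subgroup of the hyperbolic group $\Gamma$; by the results already invoked in Proposition \ref{decrireG3} (Propositions \ref{actionelementaire} and \ref{actioncocompacte}) such a subgroup is virtually cyclic, hence $G_0 \cong \Z$ or $G_0 = \{1\}$. If $G_0 = \{1\}$ then $\#A \le \nu(K)$ directly; otherwise write $G_0 = \langle\sigma\rangle$. Here the displacement estimate enters: $\sigma \in \Gamma^*$ is torsion-free, so $\ell(\sigma) > L_0$, and since $\ell(\sigma^m) = |m|\,\ell(\sigma) \le d(x,\sigma^m x)$ for all $m$, the orbit $\{\sigma^k x\}_{k\in\Z}$ is pairwise more than $L_0$-separated. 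Thus for a fixed coset $gG_0$, whenever $g\sigma^k, g\sigma^{k'} \in A$ we get $|k-k'|\,L_0 < d(\sigma^k x,\sigma^{k'} x) = d(g\sigma^k x, g\sigma^{k'} x) \le 2r_0$, so the admissible exponents lie in an interval of length $< 2r_0/L_0$ and $\#(A\cap gG_0) \le 2r_0/L_0 + 1$.

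Combining the two bounds yields $\#\Sigma_{10(D+\delta)}(x) = \#A \le \nu(K)\bigl(2r_0/L_0 + 1\bigr) =: N(\delta,H,D)$, a quantity depending only on $\delta,H,D$. I expect the main obstacle to be exactly this middle passage: Theorem \ref{BGTbasic} only covers $A$ by finitely many cosets of a group $G_0$ that could a priori be infinite, so each coset might meet $A$ in unboundedly many elements. It is the interplay of torsion-freeness and $\delta$-hyperbolicity (collapsing $G_0$ to $\Z$) together with the universal lower bound on $\ell(\sigma)$ (forcing the powers $\sigma^k x$ to leave the ball at a linear rate) that converts the covering into a genuine cardinality estimate.
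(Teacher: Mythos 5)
Your proposal is correct and follows essentially the same route as the paper: Bishop--Gromov (Theorem \ref{cocompact2}) gives the doubling bound $K\le 3^5e^{72H(D+\delta)}$, Breuillard--Green--Tao covers $\Sigma_{10(D+\delta)}(x)$ by $\nu(K)$ cosets of a virtually nilpotent $G_0$, which collapses to $\{1\}$ or $\Z$ by torsion-freeness and Proposition \ref{actioncocompacte}(v), and Theorem \ref{theo:minordeplacement} bounds each coset's intersection with the ball. The only differences are cosmetic (you kill $L$ explicitly, avoid choosing coset representatives, and land on a marginally sharper numerical constant than \eqref{cteN1}).
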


The precise value of the constant is
\begin{equation}\label{cteN1}
N (\delta, H, D) := \nu  \left( 3^5 e^{72 H (D+\delta)} \right) \cdot
\left( 1 + \dfrac{20 (D + \delta)}{5 D + \delta}\left( \nu \big(3^5 e^{73 H (D + 4 \delta)}\big) + 2 \right) \right) \, ,
\end{equation}
where $\nu (\cdot)$ is the function defined in Theorem \ref{BGTbasic}.

\begin{proof}[Proof of Proposition \ref{Nbgenerators}] 
The fact that $\Sigma_{10 (D + \delta)} (x)$ is a generating set of $\Gamma$ is a consequence of Proposition \ref{generatorsdiameter}.
By Remark \ref{virtuelcyclique}, $\Gamma$ is non virtually cyclic because it is non cyclic.

Let us choose $R:= 10 (D + \delta)$ and $A = B := \Sigma_{R} (x) = \{\g \in \Gamma : d(x, \g x) \le  R \}$. For every $\eta >0$, Theorem \ref{cocompact2}  implies that
$$ \dfrac{\# (A\cdot B)}{(\# A)^{1/2} (\# B)^{1/2}} = \dfrac{\# (A\cdot A)}{\# A} \le
\dfrac{\mu_{x}^{\Gamma} \big( B_X ( x , 2 (R + \eta))\big)}{\mu_{x}^{\Gamma} \big( B_X ( x , R )\big)} \le 3 \left( \frac{2 (R + \eta)}{R} \right)^{25/4} e^{12 H (\frac{3}{5} R + \eta)} \,.$$
Taking the limit when $\eta$ goes to zero, we deduce that 
$$\dfrac{\# (A\cdot B)}{(\# A)^{1/2} (\# B)^{1/2}} \le 3 \cdot 2^{25/4} e^{12 H (\frac{3}{5} R)}\le 3^5 e^{72 H (D+\delta)}\,.$$

We then apply Theorem \ref{BGTbasic}, with $K=3^5 e^{72 H (D+\delta)}$. We deduce that there exist a virtually nilpotent subgroup $G_0$ and 
a subset $S := \{g_1, \ldots , g_{N}\}$ of $\Gamma$, with $N \leq \nu (K) $ elements, such that 
$\Sigma_{R} (x) \subset \bigcup_{i = 1}^{N} g_i . G_0$ and the cosets $g_i . G_0$ are disjoint. We now define 
$ S':=\{\g_1, \ldots , \g_{n}\}$ to be the subset of those elements $\g \in S$ such that $\g . G_0 \cap \Sigma_{R} (x) \ne \emptyset$. We then have $n \le \nu (K) $, and we obtain that $\Sigma_{R} (x) = \bigcup_{i = 1}^{n} \big(\g_i . G_0 \cap \Sigma_{R} (x) \big)$.
We may furthermore suppose that each $\g_i $ belongs to $\Sigma_{R} (x) $; indeed, if not we can replace $\g_i$ 
by any element of the non empty set $\g_i . G_0 \cap \Sigma_{R} (x)$, that we still denote by $\g_i$.\\
As $\Gamma$ acts co-compactly and properly on the Gromov-hyperbolic space $(X,d)$, the virtually nilpotent subgroup $G_0$ is virtually cyclic by Proposition \ref{actioncocompacte} (v), and hence it is isomorphic to $(\Z , +)$ by Remark \ref{virtuelcyclique}. Then $\g_i . G_0 = \{\g_i  \sigma^k : k \in \Z\}$,
where $\sigma$ is a generator of $G_0$ and the triangle inequality together with the fact that $d(x, \g_i x) \le R$ imply that
$$\# \left(\g_i . G_0 \cap \Sigma_{R} (x)\right)  \le \# \{k : d(x, \g_i \sigma^k x) \le R\} \le \# \{k : d(x, \sigma^k x) \le 2 R\} \le \# \{k : |k| \ell (\sigma) \le 2 R\} \, .$$

Now, by Theorem  \ref{theo:minordeplacement}, $\ell (\sigma) > \dfrac{2 (5 D + \delta)}{\nu \big(3^5 e^{73 H (D + 4 \delta)}\big) + 2}$ since $\Gamma$ is torsion-free. We deduce from the previous inequality that 
$$\# \left(\g_i . G_0 \cap \Sigma_{R} (x) \right) \le \dfrac{4 R}{\ell (\sigma)} + 1 \le 1 + \dfrac{20 (D + \delta)}{5 D + \delta}\left( \nu \big(3^5 e^{73 H (D + 4 \delta)}\big) + 2 \right),$$ 
and consequently that
$$\# \Sigma_{R} (x) = \sum_{i = 1}^{n} \# \left(\g_i . G_0 \cap \Sigma_{R} (x)\right) \le \nu  \left( 3^5 e^{72 H (D+\delta)} \right)
\left( 1 + \dfrac{20 (D + \delta)}{5 D + \delta}\left( \nu \big(3^5 e^{73 H (D + 4 \delta)}\big) + 2 \right) \right).$$
\end{proof}

\subsection{Finiteness for hyperbolic groups}

Given $\delta, H, D > 0$, from the constant $N (\delta, H, D)$ given by \eqref{cteN1}, we define the other constants:
\begin{equation}\label{universalcts}
N_0 (\delta, H, D) := \sum_{i = 0}^{N(\delta , H, D)}  2^{(2 i)^{4\delta + 6}}\,, \quad  N_1 (\delta , H) := N_0 (\delta, H , 1)  \,,
\end{equation}

\begin{equation}\label{universalcts1}
 N_2 (\delta, H, D) :=  N_1 \left(16 \left(\frac{\delta}{D} + 2\right) , 10 H D \right) \, .
\end{equation}

Let us recall that, in a marked group $(\Gamma , \Sigma)$, the generating set $\Sigma$ is supposed to be finite.

\begin{theorem}\label{Nbgroupeshyp} Given $\delta, H > 0$, the set of non cyclic torsion-free $\delta$-hyperbolic marked groups 
$(\Gamma , \Sigma)$ satisfying $\Ent (\Gamma , \Sigma) \le H$ has cardinality (up to isometries) smaller than 
$N_1 : = N_1 ( \delta, H) $.
\end{theorem}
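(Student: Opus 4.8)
The plan is to reduce the statement entirely to two results already at our disposal: the uniform bound on the number of generators of Proposition~\ref{Nbgenerators}, and the counting of presentations of Proposition~\ref{Nbgroupes}. By Remark~\ref{rem:bound}, in order to bound the cardinality (up to isometries) of a family of $\delta$-hyperbolic marked groups it suffices to bound, uniformly over the family, the cardinality of the marking $\Sigma$; once this is achieved, Proposition~\ref{Nbgroupes} closes the argument. So the entire task is to produce such a uniform bound on $\#\Sigma$ from the entropy hypothesis.

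First I would let each marked group $(\Gamma,\Sigma)$ in the family act by left translations on its own Cayley graph $X:=\mathcal{G}(\Gamma,\Sigma)$, taking as base point the vertex $x$ corresponding to the identity. By the very definition of a $\delta$-hyperbolic marked group, $X$ is a proper geodesic $\delta$-hyperbolic space; the action is proper (every ball in the word metric is finite, since $\Sigma$ is finite) and co-compact, and the quotient $\Gamma\backslash X$ has diameter at most $1$ because every edge of $\mathcal{G}(\Gamma,\Sigma)$ has length one. Moreover $\Ent(X,d)=\Ent(\Gamma,\Sigma)\le H$: by definition $\Ent(\Gamma,\Sigma)$ is the entropy of $\mathcal{G}(\Gamma,\Sigma)$ endowed with the counting measure, and by Proposition~\ref{Entropies1} this entropy does not depend on the chosen $\Gamma$-invariant measure in the co-compact case. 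Since $\Gamma$ is non-cyclic and torsion-free, all the hypotheses of Proposition~\ref{Nbgenerators} are met, \emph{with $D=1$}.

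Applying Proposition~\ref{Nbgenerators} at the point $x$ with $D=1$ then yields $\#\,\Sigma_{10(1+\delta)}(x)\le N(\delta,H,1)$. Now every $\sigma\in\Sigma$ satisfies $d_\Sigma(x,\sigma x)=|\sigma|_\Sigma=1\le 10(1+\delta)$, so that $\Sigma\subset \Sigma_{10(1+\delta)}(x)$ and hence $\#\Sigma\le N(\delta,H,1)$. This is exactly the uniform bound on the marking we were after. Finally I would feed $k:=N(\delta,H,1)$ into Proposition~\ref{Nbgroupes}: the family of all $\delta$-hyperbolic marked groups with $\#\Sigma\le k$ has cardinality (up to isometries) at most $q_k=\sum_{i=0}^{N(\delta,H,1)}2^{(2i)^{4\delta+6}}=N_0(\delta,H,1)=N_1(\delta,H)$, which is precisely the asserted bound.

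The substantial difficulty is thus entirely hidden in Proposition~\ref{Nbgenerators}, and behind it in Theorem~\ref{theo:minordeplacement} (the lower bound on the asymptotic displacement) together with the Bishop--Gromov estimate of Theorem~\ref{cocompact2}. At the present level the only points demanding care are of geometric bookkeeping: verifying that the Cayley graph genuinely realizes the hypotheses of Proposition~\ref{Nbgenerators} with quotient diameter controlled by $1$, that its entropy with respect to the counting measure coincides with $\Ent(\Gamma,\Sigma)$ (via Proposition~\ref{Entropies1}), and that the given marking $\Sigma$ really sits inside the larger generating set $\Sigma_{10(1+\delta)}(x)$ whose size has been bounded. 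None of these is deep, but each must be stated explicitly so that the specialization $D=1$ of Proposition~\ref{Nbgenerators} is legitimate.
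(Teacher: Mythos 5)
Your proof is correct and follows essentially the same route as the paper: act on the Cayley graph with $D=1$, invoke Proposition~\ref{Nbgenerators} to bound $\#\Sigma$ by $N(\delta,H,1)$ (the paper notes $\Sigma=\overline B_\Gamma(e,1)\subset\Sigma_{10(1+\delta)}(e)$ exactly as you do), then conclude via Proposition~\ref{Nbgroupes}. No gaps.
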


Notice that, by Remark \ref{virtuelcyclique}, when a Gromov-hyperbolic group $\Gamma$ is cyclic and torsion-free, then it is isomorphic to $\Z$.

\begin{proof}[Proof of Theorem \ref{Nbgroupeshyp}]
Let $(\Gamma , \Sigma)$ be a torsion-free and $\delta$-hyperbolic marked group  satisfying $\Ent (\Gamma , \Sigma) \le H$ and let us consider its canonical action on its 
Cayley graph, which is proper and isometric with respect to the algebraic distance $d_\Sigma$ and satisfies $\diam (\Gamma \backslash X) \le 1$.
We may apply Proposition \ref{Nbgenerators}, which implies that the closed $d_\Sigma$-ball $\overline B_\Gamma \big( e , 10 (1 + \delta) \big) = 
\Sigma_{10 (1 + \delta)} (e)$ 
has less that $N (\delta, H, 1)$ elements, where $N (\cdot, \cdot, \cdot)$ is the universal function defined in \eqref{cteN1}. Notice that 
$\Sigma = \overline B_\Gamma \big( e , 1)$, which implies that $\# \Sigma \le N (\delta, H, 1)$.

For the sake of simplicity we shall now write $N$ instead of $N (\delta, H, 1)$. Then, applying Proposition \ref{Nbgroupes} with $k = N $ and 
$N_1 = N_1 ( \delta, H)  :=  \sum_{i = 0}^{N}  2^{(2 i)^{4\delta + 6}}$, yields Theorem \ref{Nbgroupeshyp}.
\end{proof}

\begin{remark}\label{counterfinite}
In Theorem \ref{Nbgroupeshyp}, each hypothesis is necessary, this is proved by detailed examples described in Section \ref{hypnecessary42}.
\end{remark}

\subsection{Finiteness for groups acting on a hyperbolic space}\label{subsec:finitenessaction}

\emph{The next step is to show the finiteness of torsion-free groups acting isometrically, properly and cocompactly on a $\delta$-hyperbolic space. The main issue is that, although these groups are Gromov-hyperbolic, we need to find a generating set for which the hyperbolicity constant of the corresponding marked groups can be computed. Below are precise statements.}

\begin{theorem}\label{boundongroups}
Given $\delta, H, D > 0$, let ${\cal H} (\delta, H, D)$ be the set of torsion-free groups which admit a proper isometric action on some $\delta$-hyperbolic space 
$(X,d)$ satisfying $\Ent (X , d) \le H$ and $\diam (\Gamma \backslash X) \le D$. Then, ${\cal H} (\delta, H, D)$ has cardinality (up to isomorphisms) 
bounded above by the constant $N_2 : = N_2 ( \delta, H, D) < +\infty$ defined in \eqref{universalcts1}.
\end{theorem}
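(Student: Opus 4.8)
The plan is to reduce Theorem \ref{boundongroups} to the already-established finiteness of marked groups (Theorem \ref{Nbgroupeshyp}) by manufacturing, for each group $\Gamma \in {\cal H}(\delta, H, D)$, a \emph{good} marked structure whose hyperbolicity constant and entropy are controlled purely in terms of $\delta$, $H$ and $D$. The difficulty is that an abstract proper cocompact action on a $\delta$-hyperbolic space does not directly give a marked group with a computable hyperbolicity constant: the Cayley graph of $\Gamma$ with respect to an arbitrary generating set need not be $\delta$-hyperbolic for the same $\delta$. So the strategy is to pick the specific generating set $\Sigma := \Sigma_{10(D+\delta)}(x)$ provided by Proposition \ref{Nbgenerators}, transfer the geometry of $(X,d)$ to the Cayley graph ${\cal G}(\Gamma, \Sigma)$ via a quasi-isometry, and then invoke stability of hyperbolicity under quasi-isometry to get an explicit new hyperbolicity constant.

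First I would fix $x \in X$ and set $\Sigma := \Sigma_{10(D+\delta)}(x)$, which by Proposition \ref{Nbgenerators} is a symmetric generating set with $\#\Sigma \le N(\delta, H, D)$. Since $\Gamma$ acts properly, cocompactly and by isometries on $(X,d)$ with $\diam(\Gamma\backslash X)\le D$, the orbit map $\gamma \mapsto \gamma x$ is a quasi-isometry from $(\Gamma, d_\Sigma)$ to $(X,d)$ by the \v{S}varc--Milnor lemma, with multiplicative and additive constants that one reads off explicitly from $D$ and from the displacement threshold $10(D+\delta)$. Because $(X,d)$ is $\delta$-hyperbolic and hyperbolicity is a quasi-isometry invariant among geodesic spaces, the Cayley graph ${\cal G}(\Gamma,\Sigma)$ is $\delta'$-hyperbolic for some $\delta'$ depending only on $\delta$ and the quasi-isometry constants, hence only on $\delta$ and $D$; tracking the constants gives the bound $\delta' \le 16\!\left(\frac{\delta}{D}+2\right)$ used in the definition \eqref{universalcts1} of $N_2$.

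Next I would control the entropy of the marked group. The orbit counting measure $\mu_x^\Gamma$ and the word metric are comparable under the same quasi-isometry, so $\Ent(\Gamma,\Sigma)$ differs from $\Ent(X,d,\mu_x^\Gamma)=\Ent(X,d)$ only by the multiplicative quasi-isometry constant; by Proposition \ref{Entropies1} the entropy is intrinsic in the cocompact case, and one obtains an explicit bound of the form $\Ent(\Gamma,\Sigma)\le 10HD$, matching the second argument of $N_1$ in \eqref{universalcts1}. Thus every $\Gamma\in{\cal H}(\delta,H,D)$ carries a marked structure $(\Gamma,\Sigma)$ that is non-cyclic (since $\Gamma$ is non-virtually-cyclic, the relevant case; the cyclic torsion-free case gives $\Gamma\cong\Z$, contributing just one group), torsion-free, $16(\frac{\delta}{D}+2)$-hyperbolic, and of entropy at most $10HD$.

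Finally I would apply Theorem \ref{Nbgroupeshyp} to the family of these marked groups: their cardinality up to isometry is at most $N_1\!\left(16(\frac{\delta}{D}+2),\,10HD\right)=N_2(\delta,H,D)$. Since isometric marked groups are in particular isomorphic as groups, and since every member of ${\cal H}(\delta,H,D)$ is realized by at least one such marked group, the number of isomorphism classes in ${\cal H}(\delta,H,D)$ is bounded by $N_2$. The main obstacle, and the place where care is needed, is the explicit constant-tracking in the \v{S}varc--Milnor and quasi-isometry-invariance-of-hyperbolicity steps: one must verify that the resulting $\delta'$ and entropy bound really come out as $16(\frac{\delta}{D}+2)$ and $10HD$ respectively, rather than merely \emph{some} computable function, so that the stated value of $N_2$ in \eqref{universalcts1} is correct.
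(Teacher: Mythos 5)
Your proposal follows essentially the same route as the paper: dispose of the cyclic case separately ($\Gamma\cong\Z$), then turn each non-cyclic $\Gamma\in{\cal H}(\delta,H,D)$ into a marked group via an orbit generating set, use an explicit \v{S}varc--Milnor comparison together with the quantitative quasi-isometry invariance of hyperbolicity (Proposition \ref{quasiisometric}) to obtain a $\delta'$-hyperbolic marked group of controlled entropy, and conclude by Theorem \ref{Nbgroupeshyp}. The one point to adjust is the generating set: the paper works with $\Sigma_{10D}(x)$ rather than $\Sigma_{10(D+\delta)}(x)$, and it is for this smaller set that the constants of Lemmas \ref{Gromovgeneral} and \ref{quasiisom} (with $k=8$) yield exactly $\delta'=16\left(\frac{\delta}{D}+2\right)$ and $\Ent\big(\Gamma,\Sigma\big)\le 10HD$; with your choice the same computation replaces $D$ by $D+\delta$, so the argument still proves finiteness but with a constant different from the $N_2$ of \eqref{universalcts1} --- precisely the constant-tracking caveat you flagged at the end.
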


Consider a proper isometric action of a torsion-free group $\Gamma$ on a $\delta$-hyperbolic space $(X,d)$ satisfying $\Ent (X , d) \le H$ and
$\diam (\Gamma \backslash X) \le D$. If $\Gamma$ is cyclic, then it is isomorphic to $\Z$ because it is torsion-free, hence $\Z$ is one element in ${\cal H} (\delta, H, D)$. 
From now on, we only have to consider the case where $\Gamma$ is non cyclic. Let us remark that, since $\Gamma$ is torsion-free, the action, being proper, is faithful and without fixed points; indeed, for $x\in X$, the set of $\g$'s such that $\g x =x$ is a finite group, thus has torsion. 

It comes from \cite{GH} (Th\'eor\`emes 3.22 and 5.12 p. 88) that every group acting properly and cocompactly (by isometries) on a 
Gromov $\delta$-hyperbolic space is a (finitely generated) $\delta'$-hyperbolic group. We need to find a generating set that allows to compute explicitly this number $\delta'$ in terms of $\delta $ and of the diameter of $\Gamma \backslash X$. Theorem 5.12 of \cite{GH} proves that the existence of a $(\lambda , C )$ quasi-isometry from a metric geodesic space $Y$ to a $\delta$-hyperbolic space $X$ implies that $Y$ is $\delta''$-hyperbolic. Following step by step the proof of  this theorem allows to make $\delta''$ explicit in terms of $\delta$, $\lambda$ and $C$. The proof and the computations in \cite{GH} are rather long and involved, mainly because the authors want to take into account the most general case. Here, in Proposition \ref{hyphyp}, we consider a particular case which suits our purpose and 
where the computation of $\delta'$ in terms of $\delta$ is simpler.

\begin{prop}\label{hyphyp}
Let $\Gamma$ be a group which admits an isometric action without fixed points on some $ \delta$-hyperbolic metric space $(X , d)$ such that 
the diameter of $\Gamma \backslash X$ is bounded by $D$; then $\Gamma$ is a $\delta'$-hyperbolic group, where $\delta' = 16 \left(\frac{\delta}{D} + 2\right)$,
when endowed with the algebraic distance associated to the generating set $\Sigma_{10 D} (x)= \{ \g : d(x, \g x) \le 10 D\}$.
\end {prop}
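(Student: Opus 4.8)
The plan is to exhibit an explicit quasi-isometry between the Cayley graph of $(\Gamma, \Sigma_{10D}(x))$ and the space $(X,d)$, and then to track the hyperbolicity constant through the standard stability-of-geodesics argument rather than quoting the general statement of \cite{GH}. First I would fix a basepoint $x \in X$ and consider the orbit map $\Phi : \Gamma \to X$, $\g \mapsto \g x$, extended to the whole Cayley graph $\mathcal G(\Gamma, \Sigma_{10D}(x))$ by sending each edge to a geodesic segment in $X$. The key preliminary is to verify that $\Phi$ is a $(\lambda, C)$-quasi-isometry with constants controlled explicitly by $\delta$ and $D$. The upper Lipschitz bound is immediate: each generator in $\Sigma_{10D}(x)$ moves $x$ by at most $10D$, so $d(\g x, g x) \le 10D \cdot d_\Sigma(\g, g)$. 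The lower bound, namely $d_\Sigma(\g, g) \le \lambda\, d(\g x, g x) + C$, is where the diameter hypothesis enters: because $\diam(\Gamma \backslash X) \le D$, any geodesic in $X$ joining two orbit points can be subdivided and shadowed by an orbit, so that a word in $\Sigma_{10D}(x)$ of controlled length realizes the group element; this is essentially the content of Proposition \ref{generatorsdiameter} made quantitative, and it shows $\Sigma_{10D}(x)$ generates with the desired metric estimate. Finally coarse surjectivity follows again from $\diam(\Gamma \backslash X) \le D$, since every point of $X$ lies within $D$ of the orbit $\Gamma x$.

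Once $\Phi$ is known to be a quasi-isometry with explicit $(\lambda, C)$, the second step is to feed these constants into the stability of quasi-geodesics in $\delta$-hyperbolic spaces (the Morse lemma), which guarantees that the image of a geodesic in the Cayley graph stays within a Hausdorff distance $\kappa(\delta, \lambda, C)$ of a genuine geodesic in $X$. Applying this to the three sides of a triangle in the Cayley graph, I would transport the $\delta$-thinness of its image triangle in $X$ back to a thinness estimate for the original triangle, obtaining $\delta'$-hyperbolicity of $\mathcal G(\Gamma, \Sigma_{10D}(x))$ for an explicit $\delta'$. The whole point of the proposition is to do this bookkeeping carefully enough to arrive at the clean value $\delta' = 16\left(\frac{\delta}{D} + 2\right)$, so I expect the argument to consist of choosing the quasi-isometry constants to be as sharp as possible — plausibly $\lambda$ related to $10D$ and $C$ to $D$ — and then optimizing the Morse-lemma bound.

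The main obstacle, and the reason this cannot simply cite \cite{GH} verbatim, is the \emph{explicitness and sharpness} of the constant: the general proof in \cite{GH} is built for maximal generality and yields an unwieldy $\delta''$, whereas here the specific form of the generating set $\Sigma_{10D}(x)$ and the single hypothesis $\diam(\Gamma \backslash X) \le D$ should allow a streamlined estimate. The delicate points will be (i) getting the lower quasi-isometry bound with the right linear dependence on $D$, since a crude subdivision argument loses constants, and (ii) controlling the Morse-lemma constant $\kappa$ in a way that, after rescaling the metric by a factor comparable to $D$, collapses to the stated affine expression in $\delta/D$. In particular the appearance of $\delta/D$ rather than $\delta$ alone signals that the argument is carried out after normalizing distances by $D$, so I would set up the comparison in the rescaled metric $\frac{1}{D} d$ from the outset to make the dependence transparent and then read off $\delta'$.
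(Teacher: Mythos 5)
Your proposal follows essentially the same route as the paper: the authors also extend the orbit map $\g \mapsto \g x$ affinely over the edges of the Cayley graph (their map ${\cal V}$), establish the explicit two-sided quasi-isometry estimates from the diameter bound by the subdivision-and-shadowing argument you describe (their Lemmas \ref{Gromovgeneral} and \ref{quasiisom}, with $k=8$ giving $\Sigma_{10D}(x)$), and then transfer hyperbolicity through an explicit Morse-lemma-based statement (their Proposition \ref{quasiisometric}), arriving at $\delta'$ via the constants $a=8D$, $b=26D$, $\lambda=\frac{5}{4}$, $C=\frac{65}{2}D$. The only cosmetic difference is that the paper does not rescale the metric by $\frac{1}{D}$; the factor $\frac{\delta}{D}$ simply emerges from dividing by $a=8D$ in the final formula.
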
 

\begin{proof}[Proof of Proposition \ref{hyphyp}]

We start by two technical lemmas.

\begin{lemma}\label{Gromovgeneral}
Consider a length space $(X,d)$ and a proper isometric action of a group $\Gamma$ on this space such that $\diam (\Gamma \backslash X)\le D$; for any $k \in \N^*$ and any $x \in X$, let $\Sigma := \Sigma_{(k+2) D} (x)= 
\{\g \in \Gamma : d(x, \g x) \le (k+2) D\}$, then $\Sigma $ is a generating set of $\Gamma$ and the corresponding algebraic distance
$d_\Sigma$ verifies
$$ \forall \g , \g' \in \Gamma \ \ \ \ \ \  k D \, \big( d_\Sigma (\g , \g') - 1 \big) \le d(\g x , \g' x) \le (k+2) D \,  d_\Sigma (\g , \g') \, .$$
\end{lemma}

\begin{proof}[Proof of Lemma \ref{Gromovgeneral}] For every $\g \in \Gamma$, we define $ n := \left[ \dfrac{d(x, \g x)}{k D}\right] + 1$. As $(X,d)$ is a length space, there exists a continuous path 
$c$ from $x$ to $\g x$ whose length is $ < n k D$. We choose points $ x = x_0 , x_1 , \ldots , x_n = \g x $ on this path such that 
$d(x_{i-1}, x_i) = k D$ for every integer $1 \le i \le n-1$ and $d(x_{n-1}, x_n) < k D$. This is possible because $\sum_{i = 1}^n d(x_{i-1}, x_i) \le \text{length} (c) < n k D$.

For every $ i \in \{1 , \ldots , n\}$, let $\g_i$ be an element of $\Gamma$ such that $d(x_i , \g_i x ) \le D$, we choose $\g_0 := \id_X$ and $ \g_n := \g$. We define 
$\sigma_i := \g_{i-1}^{-1} \g_i $; one then has
$\g = \g_n = \sigma_1 \, \cdots \sigma_n$, where every $\sigma_i $ belongs to $ \Sigma$, since

$$ d(x , \sigma_i x ) = d( \g_{i-1} x , \g_i x) \le d( \g_{i-1} x , x_{i-1}) + d( x_{i-1} , x_i) +  d( \g_{i} x , x_{i}) \le (k+2) D \,.$$

A first consequence is that $\Sigma$ is a generating set of $\Gamma$, a second one is that
$$d_\Sigma (e, \gamma)  - 1\le n - 1 \le \dfrac{d(x, \g x)}{k D}\,.$$

Let now $\g = s_1 \cdots s_p$ be a decomposition of minimal length of $\g$ as a product of elements of $\Sigma$, the triangle inequality then yields
$$ d(x , \g x) \le \sum_{i = 1}^p d(x, s_i x) \le p (k + 2) D = (k + 2) D\, d_\Sigma (e, \gamma) \, .$$

We thus have $k D \, \big( d_\Sigma (e, \g ) - 1 \big) \le d( x , \g x) \le (k+2) D \,  d_\Sigma (e , \g ) $ and consequently, for every $\g , \g' 
\in \Gamma$, 
$$k D \, \big( d_\Sigma (e, \g^{-1}\g' ) - 1 \big) \le d( x , \g^{-1} \g' x) \le (k+2) D \,  d_\Sigma (e , \g^{-1}\g')\,.$$ 
This ends the proof.
\end{proof}

Let us now suppose that the length space $(X,d)$ is geodesic (see Definitions \ref{geodesicspace}) and that the action of $\Gamma$ has no fixed point.
Let us fix a point $x \in X$.
For each $\sigma \in \Sigma$, we choose one geodesic $[x , \sigma x]$ from $x$ to $\sigma x$, in such a way that the geodesic $[x , \sigma^{-1} x]$ is 
the image by $\sigma^{-1}$ of $[\sigma x , x]$, oriented in the opposite direction. For each $\g \in \Gamma$ and $\sigma \in \Sigma$, we choose 
$\g ([x , \sigma x])$ as the geodesic $[\g x , \g \sigma x]$ from $\g x$ to $\g \sigma x$.
On the Cayley graph ${\cal G} (\Gamma , \Sigma)$ of $\Gamma$ (associated to the generating set $\Sigma$), we consider the distance
which is the natural extension of $d_\Sigma$ such that every edge is isometric to $[0 , 1]$; we still denote this extended distance by $d_\Sigma$ . 
We define the map ${\cal V} : {\cal G} (\Gamma , \Sigma)
\f X$ by setting ${\cal V} (\g) = \g x$ and deciding that, for every $\sigma \in \Sigma$, ${\cal V} $ maps homothetically the edge $[\g , \g \sigma]$ onto 
the geodesic $\g ([x , \sigma x])$.

\begin{lemma}\label{quasiisom}
Under the same assumptions as in Lemma \ref{Gromovgeneral},  if moreover $(X,d)$ is geodesic and if $\Gamma$ acts without fixed points, 
for every pair $ s , t \in {\cal G} (\Gamma , \Sigma)$, one has:\\ 
$  k D \, d_\Sigma (s , t) - (3 k + 2) D \le d \big( {\cal V} (s) , {\cal V} (t) \big) \le (k+2) D \cdot d_\Sigma (s , t) $.
\end{lemma}

\begin{proof}[Proof of Lemma \ref{quasiisom}]
If $s$ and $t$ are on the same edge, the Lemma is trivially verified. Let us now suppose that $s$ and $t$ are not on the same edge:
the point $s$ (resp. $t$) lies on an edge denoted by $[\g_0 , \g_1]$ (resp. by $[g_0 , g_1]$), where the endpoints of these edges are chosen in
such a way that the shortest path from $s$ to $t$ passes through $\g_0$ and $g_0$. One then has simultaneously:
\begin{equation}\label{quasiisom2}
d_\Sigma (s , t)  = d_\Sigma (s , \g_0) + d_\Sigma (\g_0, g_0) + d_\Sigma (g_0, t),\quad
d_\Sigma (s , t) \le  d_\Sigma (s , \g_1) + d_\Sigma (\g_1 , g_1) + d_\Sigma (g_1, t) \,.
\end{equation}
As $s \in [\g_0 , \g_1]$ and as ${\cal V}$ is an homothety from the edge $[\g_0 , \g_1]$, satisfying $d_\Sigma (\g_0 , \g_1) = 1$, of the Cayley graph, 
onto the geodesic $[\g_0 x , \g_1 x ]$, 
with homothety factor $d( \g_0 x , \g_1 x)$, we have $d \big( \g_0 x , {\cal V} (s) \big) = d( \g_0 x , \g_1 x)\cdot d_\Sigma (\g_0, s) \le 
(k+2) D \cdot d_\Sigma (\g_0, s)$. One proves similarly that $d \big( g_0 x , {\cal V} (t) \big)\le (k+2) D \cdot d_\Sigma (g_0, t)$.
Lemma \ref{Gromovgeneral} and the first equation of \eqref{quasiisom2} give:
$$d \big( {\cal V} (s) , {\cal V} (t) \big) \le d \big( \g_0 x , {\cal V} (s) \big) + d(\g_0 x, g_0 x)+ d \big( g_0 x , {\cal V} (t) \big) $$
$$\le (k+2) D \cdot \big(  d_\Sigma (s , \g_0) + d_\Sigma (\g_0, g_0) + d_\Sigma (g_0, t)  \big) =  (k+2) D \cdot d_\Sigma (s , t)  \,,$$
this proves the second inequality of Lemma \ref{quasiisom}.

By addition of the two estimates \eqref{quasiisom2}, using the equality $ d_\Sigma (\g_0 , s) + d_\Sigma (s , \g_1) = 1$ and its analogous for $t$, we get:
\begin{equation}\label{quasiisom1}
k D \,d_\Sigma (s , t)  \le \frac{1}{2}\, k D \,\big( d_\Sigma (\g_0, g_0) + d_\Sigma (\g_1, g_1) + 2 \big) \le 
\frac{1}{2}\big( d (\g_0 x , g_0 x)  +  d (\g_1 x , g_1 x) \big) + 2 k D ,
\end{equation}

where the last inequality follows from Lemma \ref{Gromovgeneral}.
On the other hand, we have
$$\begin{array}{l} d \big( {\cal V} (s) , {\cal V} (t) \big) \ge d (\g_0 x , g_0 x) - d \big( {\cal V} (s) , \g_0 x\big) - d \big( {\cal V} (t) , g_0 x\big)\,,\\
d \big( {\cal V} (s) , {\cal V} (t) \big) \ge d (\g_1 x , g_1 x) - d \big( {\cal V} (s) , \g_1 x\big) - d \big( {\cal V} (t) , g_1 x\big)\, .
\end{array}$$
and, by addition of these two inequalities, using the estimate 
$$ d \big(  \g_0 x ,  {\cal V} (s) \big) + d \big( {\cal V} (s) , \g_1 x\big) =  d \big( \g_0  x , \g_1 x\big) \le (k+ 2) D$$

 (because $ \g_0^{-1} \g_1 \in \Sigma$) and the analogous estimate for $ d \big(  g_0 x ,  {\cal V} (t) \big) 
+ d \big( {\cal V} (t) , g_1 x\big)$, we get:
$$ d \big( {\cal V} (s) , {\cal V} (t) \big)  \ge \frac{1}{2} \big( d (\g_0 x , g_0 x)  +  d (\g_1 x , g_1 x) \big) -  (k+ 2) D
\ge k D \,d_\Sigma (s , t) - (3 k + 2) D \, ,$$
where the last inequality follows from \eqref{quasiisom1}. This proves the first inequality of Lemma \ref{quasiisom}.
\end{proof}

For any two points $ s , \, t$ of the Cayley graph ${\cal G} (\Gamma , \Sigma)$, denote by $[s,t]$ a shortest path joining $s$ to $t$ in 
${\cal G} (\Gamma , \Sigma)$, {\sl i.e.} a geodesic of ${\cal G} (\Gamma , \Sigma)$ endowed with the accessibility distance $d_\Sigma$; then 
${\cal V} ([s,t])$ is a piecewise-geodesic path from ${\cal V} (s)$ to ${\cal V} (t)$ in $X$ which satisfies:
$$\text{Length}\big( {\cal V} ([s,t])\big ) \le (k+2) D\, d_\Sigma (s,t) \le \frac{k + 2}{k}\, d \big( {\cal V} (s) , {\cal V} (t) \big) + \frac{(k + 2)(3 k + 2)}{k} D
\, ,$$
where the first inequality follows, by addition of geodesic lengths, from the second inequality of Lemma \ref{quasiisom} and where the second
inequality comes from the first inequality of Lemma \ref{quasiisom}.

Choosing $k := 8$ in the sequel then, by Lemma \ref{Gromovgeneral},
$\Sigma := \Sigma_{10 D} (x)$ is a generating set of $\Gamma$ and the last inequality becomes:
\begin{equation}\label{dvsdsigma}
\text{Length}\big({\cal V} ([s,t])\big) \le \frac{5}{4}\, d \big( {\cal V} (s) , {\cal V} (t) \big) + \frac{65}{2}\, D\, .
\end{equation}
Now, taking also $k = 8$ in  Lemma \ref{quasiisom}, we obtain:
\begin{equation}\label{dvsdsigma1}
 d \big( {\cal V} (s) , {\cal V} (t) \big) \ge 8 D \, d_\Sigma (s , t) - 26\, D \,.
\end{equation}
Inequations \eqref{dvsdsigma} and \eqref{dvsdsigma1} respectively prove that the hypotheses (ii) and (i) of Proposition \ref{quasiisometric} are verified
with the following values of the parameters: $ a = 8 D$, $b = 26 D$, $\lambda = \frac{5}{4}$ and $C = \frac{65}{2} D$;
Proposition \ref{quasiisometric} then implies that the group $\Gamma$, endowed with the distance $d_\Sigma$ associated to the generating set $\Sigma := \Sigma_{10 D} (x)$ is $\delta'$- hyperbolic with 
$$\delta' = \dfrac{4}{a} \,\left( (6 \lambda^2\ + 14 \lambda + 5) \delta  + \frac{4 \lambda + 3}{6 \lambda + 2} \,C + b\right) \le 16 \left( \frac{\delta}{D} + 2 \right) \, .$$
\end{proof}

\begin{proof}[Proof of Theorem \ref{boundongroups}]
Now, one has, by a classical argument, 
$$ \Ent \big(\Gamma, \Sigma_{10 D} (x) \big) \le 10 D \Ent (X,d) \le 10  H D\,.$$ 
We may then apply Theorem \ref{Nbgroupeshyp}, where we replace $\delta$ and $H$ respectively by $16 \left(\frac{\delta}{D} + 2\right)$ and $10 H D$, this shows that the set of marked groups $\big(\Gamma, \Sigma_{10 D} (x) \big) $ has cardinality, up to isometries, bounded by $N_2 (\delta, H , D)$.
\end{proof}

\begin{remark}
In Theorem \ref{boundongroups}, each hypothesis is necessary, this is proved by detailed examples described in Section \ref{hypnecessary44}.
\end{remark}

\section{Finiteness results for  spaces}\label{sec:topo}

\emph{We now derive several consequences of the previous finiteness theorems.}

\subsection{Finiteness for $\delta$-hyperbolic spaces up to quasi-isometries}\label{subsec:quasiisometric}

Let us start by recalling some standard definitions.

\begin{defi}\label{Space} Given $\lambda \ge 1$ and $c , a \ge 0$, two metric spaces $(X,d_X)$ and $(Y,d_Y)$ are said to be $(\lambda, c , a)$-quasi isometric 
if there exist $f : X \f Y$ and $h : Y \f X$ satisfying the following properties for every $x, x' \in X$ and every $y,y' \in Y$:
\begin{itemize}
\item $d_Y \big(f (x) , f (x') \big) \le \lambda \, d_X (x,x') + c$ and $d_X \big(h (y) , h (y') \big) \le \lambda \, d_Y (y,y') + c$,
\item $d_X \big(x \,, \, h \circ f (x) \big) \le a$ and $d_Y \big(y \,, \, f \circ h (y) \big) \le a$
\end{itemize}
\end{defi}
Up to trivial changes in the parameters, this definition is equivalent to the other classical ones.

\begin{defi} 
For the sake of simplicity, we shall say that a family $\mathcal F$ of metric spaces has cardinality smaller than $q < + \infty$ up to  $(\lambda, c , a)$-quasi isometries
whenever there exist metric spaces $(X_1, d_1), \ldots , (X_q, d_q)$ in $\mathcal F$ such that any other metric space $(X , d)\in \mathcal F$ is $(\lambda, c , a)$-quasi isometric to some of the $(X_i, d_i)$'s.
\end{defi}

\begin{defi}\label{Space} Given $\delta , H , D > 0$ we denote by $\mathcal M_{hyp} (\delta, H , D)$ the set of $\delta$-hyperbolic spaces, with entropy 
$\le H$, which admit the action of a non-cyclic, torsion-free, uniform lattice $\Gamma\subset\mathrm{Isom}(X, d)$ such that the diameter of the quotient $\Gamma\backslash X$ is $\le D$. For the sake of simplicity $\Gamma$ is called a $D$-lattice for $(X,d)$.
\end{defi}

Recalling that $N_2 ( \delta, H, D) $ is the constant defined in \eqref{universalcts1}, we have the

\begin{theorem}\label{theo:quasiisometric}
The set $\mathcal M_{hyp} (\delta, H , D)$ has cardinality bounded above by $N_2(\delta, H, D)$ up to  $ \left(\frac{5}{4} , \frac{25}{2}\, D,  D \right)$-quasi isometries.
\end{theorem}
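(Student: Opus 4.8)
The plan is to show that the finiteness of groups (Theorem~\ref{boundongroups}) lifts to a finiteness of the spaces up to quasi-isometry, using the fact that each space in $\mathcal M_{hyp}(\delta,H,D)$ is quasi-isometric to the Cayley graph of its acting lattice. First I would take $(X,d)\in\mathcal M_{hyp}(\delta,H,D)$ together with a $D$-lattice $\Gamma\subset\mathrm{Isom}(X,d)$, and fix a base-point $x\in X$. The key object is the orbit map $\Gamma\to X$, $\g\mapsto\g x$, and more precisely its extension $\mathcal V:{\cal G}(\Gamma,\Sigma_{10D}(x))\to X$ built exactly as in the proof of Proposition~\ref{hyphyp}. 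The two estimates of Lemma~\ref{quasiisom} (with $k=8$) already say that $\mathcal V$ is a quasi-isometric embedding from the Cayley graph into $X$, and since $\Sigma_{10D}(x)$ contains a fundamental domain, $\mathcal V$ is coarsely surjective. Thus every $(X,d)$ is $(\lambda,c,a)$-quasi-isometric to the Cayley graph $\big({\cal G}(\Gamma,\Sigma_{10D}(x)),d_\Sigma\big)$ for computable constants; reading off $\lambda=\frac54$ and the additive constants from \eqref{dvsdsigma} and \eqref{dvsdsigma1} should give the claimed $\big(\frac54,\frac{25}{2}D,D\big)$.

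Next I would invoke Theorem~\ref{boundongroups}: the set of torsion-free groups admitting such an action has cardinality (up to isomorphism) at most $N_2(\delta,H,D)$. In fact what the proof of that theorem delivers is stronger and is what I really need here, namely that the \emph{marked} groups $\big(\Gamma,\Sigma_{10D}(x)\big)$ fall into at most $N_2(\delta,H,D)$ isometry classes of marked groups. Two isometric marked groups have isometric Cayley graphs, so there are at most $N_2(\delta,H,D)$ Cayley graphs ${\cal G}(\Gamma,\Sigma_{10D}(x))$ up to isometry, hence \emph{a fortiori} up to quasi-isometry with the above constants.

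Finally I would assemble the count by the triangle inequality for quasi-isometry. Pick one representative space for each of the $\le N_2(\delta,H,D)$ marking-isometry classes. Any $(X,d)\in\mathcal M_{hyp}(\delta,H,D)$ is $\big(\frac54,\frac{25}{2}D,D\big)$-quasi-isometric to its own Cayley graph, which is isometric (hence trivially quasi-isometric with the same or better constants) to one of the chosen representatives' Cayley graphs, which is in turn $\big(\frac54,\frac{25}{2}D,D\big)$-quasi-isometric to the corresponding representative space. Since the definition of $\mathcal M_{hyp}(\delta,H,D)$ requires the cardinality bound to be realized by members of the family itself, I would take the $N_2(\delta,H,D)$ representative \emph{spaces} (one per class) as the witnesses. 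Composing the three quasi-isometries shows every element of the family is quasi-isometric to one of them.

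The main obstacle is bookkeeping of the quasi-isometry constants rather than any conceptual difficulty: composing quasi-isometries multiplies and adds the constants, so a naive concatenation would degrade $\frac54$ and $\frac{25}{2}D$ into much larger numbers. To land exactly on $\big(\frac54,\frac{25}{2}D,D\big)$ one must be careful to let the \emph{model} in each class be a space of the family, so that the isometry step between two Cayley graphs contributes nothing, and only a single application of the Proposition~\ref{hyphyp} estimates is used on each side; the symmetric roles of $f$ and $h$ in Definition~\ref{Space} must then be matched against the two inequalities of Lemma~\ref{quasiisom}. Verifying that the coarse inverse $h$ (a nearest-orbit-point map $X\to\Gamma$) satisfies the required bounds with the same constants is the one routine-but-delicate computation I would have to check.
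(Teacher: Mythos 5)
Your overall strategy is the paper's: pass to the marked groups $(\Gamma_X,\Sigma_{10D}(x))$, use the marked-group version of the finiteness statement (Proposition \ref{hyphyp} plus Theorem \ref{Nbgroupeshyp}, which is exactly what the proof of Theorem \ref{boundongroups} delivers) to get at most $N_2(\delta,H,D)$ isometry classes, and then transport the marked-group isometry $\varrho$ to a quasi-isometry of the spaces via orbit maps. Indeed the paper's map $f(z)=\varrho(\g_z).x_0$ is precisely the composite you describe. However, there is a concrete error in your execution. The intermediate claim that $X$ is $\left(\frac54,\frac{25}{2}D,D\right)$-quasi-isometric to its own Cayley graph is false: by Lemma \ref{quasiisom} with $k=8$, the map $\mathcal V$ is $10D$-Lipschitz and its coarse inverse $z\mapsto\g_z$ is $\frac{1}{8D}$-Lipschitz up to an additive constant, so with the symmetric Definition \ref{Space} the quasi-isometry between $X$ and ${\cal G}(\Gamma,\Sigma_{10D}(x))$ has multiplicative constant of order $10D$, not $\frac54$ (the $\frac54$ in \eqref{dvsdsigma} compares a length in $X$ with a distance in $X$, not $d_\Sigma$ with $d$). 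Consequently no general composition lemma for quasi-isometries can produce the stated constants: composing three $(\lambda_i,c_i,a_i)$-quasi-isometries multiplies the $\lambda_i$, and with the correct values you would get something of order $100D^2$. The constant $\frac54$ appears only because the composite map $z\mapsto\varrho(\g_z).x_0$ contracts by $\frac{1}{8D}$ on the first leg and expands by $10D$ on the last, and $\frac{1}{8D}\cdot 10D=\frac54$; so you must estimate the composite directly, chaining the one-sided inequalities of Lemma \ref{Gromovgeneral} as the paper does, rather than treating each leg as a quasi-isometry in its own right.

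The second point you defer, namely that the coarse inverse satisfies $d\big(z,h\circ f(z)\big)\le D$ and $d_0\big(y,f\circ h(y)\big)\le D$, is not purely routine: one needs to know that $h\big(f(z)\big)=\g_z.x$ exactly, i.e.\ that the nearest-orbit-point selection applied to $u=\varrho(\g_z).x_0$ returns $\varrho(\g_z)$ itself. This uses that the action is free (Lemma \ref{autofidele} (iv), since $\Gamma$ is torsion-free), so that $g\mapsto d_0(u,g.x_0)$ attains its minimum $0$ at the unique element $\varrho(\g_z)$. Without this observation the composition $h\circ f$ is only within $2D$ of the identity, not $D$. With these two repairs your argument coincides with the paper's proof.
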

\begin{proof}
A triple $(X,d, x)$ will be called a \emph{pointed 
element of} $\mathcal M_{hyp} (\delta, H , D)$ if
$(X,d) \in \mathcal M_{hyp} (\delta, H , D)$ 
and if $x$ is a point of $X$.
Given any pointed element $(X,d, x)$ of $\mathcal M_{hyp} (\delta, H , D)$, 
a marked group $(\Gamma_X, \Sigma_X)$ is called a \emph{pointed $D$-lattice of 
$(X,d, x)$} if $\Gamma_X$ is a $D$-lattice for $ (X,d)$ and if $\Sigma_X$ is the generating set $\Sigma_{10 D} (x) := \{\g \in \Gamma_X \setminus \{1\} : 
d (x , \g x) \le 10 D\}$ of $\Gamma_X$ (see Proposition \ref{generatorsdiameter}). We are thus focusing on the orbit of $x$ under the action of $\Gamma_X\subset\mathrm{Isom}(X, d)$. In this case, the action of $\Gamma_X$ on $(X, d)$ is proper (by definition of a lattice) and fixed point-free by Lemma \ref{autofidele} (iv). Proposition \ref{hyphyp} then asserts that the marked group $(\Gamma_X, \Sigma_X)$ is $\delta'$-hyperbolic, where $\delta' = 16 \left(\frac{\delta}{D} + 2\right)$. By a classical argument, we also have that
$\Ent (\Gamma_X, \Sigma_X) \le 10 D \Ent (X,d) \le 10  H D$. Theorem \ref{Nbgroupeshyp} then ensures that the set of pointed $D$-lattices of pointed elements of $\mathcal M_{hyp} (\delta, H , D)$ has cardinality bounded above by $N_2 ( \delta, H, D) $, up to isometries. More precisely any pointed $D$-lattice $(\Gamma_X, \Sigma_X)$ of a pointed element $(X, d, x)$ of $\mathcal M_{hyp} (\delta, H , D)$ is isometric to some pointed $D$-lattice $(\Gamma_{X_0}, \Sigma_{X_0})$ in a pointed element $(X_0, d_0, x_0)$ taken in a list of cardinality $\le N_2 ( \delta, H, D) $. Let us denote by $\varrho$ this isometric isomorphism.\\
We shall now show that $(X,d)$ is then $\left( \frac{5}{4} , \frac{25}{2}\, D, D \right)$-quasi isometric to $(X_0 , d_0)$. Indeed, for every $z \in X$, let us denote by $\g_z$ one of the elements of $\Gamma_X$ such that $d \big(z, \g_z .x \big) = \Min_{\g \in \Gamma_X}\, d(z, \g .x)$. Similarly, for every 
$y \in X_0$, let us denote by $g_y$ one of the elements of $\Gamma_{X_0}$ such that $d_0 \big(y, g_y .x_0 \big) = \Min_{g \in \Gamma_{X_0}}\, d_0(y, g. x_0)$.
By definition of a $D$-lattice, we have:

\begin{equation}\label{Dapprox} 
d \big(z, \g_z .x \big) \le D \ \  \text{ and } \ \ d_0 \big(y, g_y .x_0\big) \le D
\end{equation}

Let us define $f : X\f X_0$ by $f(z) = \varrho (\g_z).x_0$ and $h : X_0 \f X$ by $h(y) = \varrho^{-1} (g_y).x$. Using twice Lemma \ref{Gromovgeneral} 
(with $k = 8$) and the isometry $\varrho$, we get:
$$\frac{1}{10 D}\,d_0 \big(\varrho (\g_z).x_0 , \varrho (\g_{z'}).x_0\big) \le  d_{\Sigma_{X_0}} \big(\varrho (\g_z) , \varrho (\g_{z'})\big) 
=  d_{\Sigma_{X}} \big(\g_z , \g_{z'}\big) \le \frac{1}{8 D}\, d(\g_z.x, \g_{z'}.x) + 1 \, .$$
Plugging in this last inequality the estimate $ d(\g_z.x, \g_{z'}.x) \le d(z,z')+ 2 D$, which follows from \eqref{Dapprox}, we deduce:
$$d_0 \big( f(z) , f(z') \big) \le \frac{5}{4} \, d(z,z') + \frac{25}{2}\, D \ \ \text{ and similarly }\ \ d \big( h(y), h(y' \big) \le \frac{5}{4}\, d(y , y') + \frac{25}{2}\, D \, .$$
Let us denote $f(z)=\varrho (\g_z) .x_0$ by $u$ for the sake of simplicity. As the action is fixed point-free, $\varrho (\g_z) $ is the unique element of 
$\Gamma_{X_0}$ where $g \mapsto d_0 \big( u , g.x_0 \big)$ attains its minimum. A consequence is that $g_u = \varrho (\g_z) $ and that 
$h \circ f (z) = h (u) = \varrho^{-1} (g_u).x = \g_z . x$. Using \eqref{Dapprox}, it follows that $d \big(z, h \circ f (z) \big)= d(z , \g_z.x) \le D$ for every
$z \in X$. Analogous arguments prove that $d_0 \big(y, f \circ h (y) \big)= d_0 (y , g_y.x_0) \le D$. Hence $f$ and $g$ provide a $ \left(\frac{5}{4} , \frac{25}{2}\, D,  D \right)$-quasi isometry between $(X,d)$ and $(X_0, d_0)$.
\end{proof}

\subsection{Homotopical finiteness}

We recall the following characterisation of the existence of the universal covering:

\begin{prop}\label{Spanier} \emph{(\cite{span} Corollaries 2.5.14 and 2.5.15)}
A connected, locally path-connected topological space $X$ has a (simply connected) universal covering if and only if it is semi-locally  simply connected
(for a definition of this notion, see \cite{span}).
\end{prop}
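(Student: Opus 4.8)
The plan is to prove both implications through the classical theory of covering spaces, which is exactly the content of the cited corollaries. Fix a basepoint $x_0 \in X$ and recall that $X$ is semi-locally simply connected precisely when every point $x$ admits a neighbourhood $U$ for which the inclusion-induced homomorphism $\pi_1(U, x) \to \pi_1(X, x)$ is trivial.

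First I would dispatch the easy direction: if $p : \widetilde X \to X$ is a simply connected covering, then $X$ is semi-locally simply connected. Given $x \in X$, choose an evenly covered open neighbourhood $U$ of $x$, so that $p^{-1}(U)$ is a disjoint union of open sets each mapped homeomorphically onto $U$; let $\widetilde U$ be the sheet containing a chosen lift $\widetilde x$ of $x$. Any loop $c$ in $U$ based at $x$ lifts uniquely to a loop $\widetilde c$ in $\widetilde U$ based at $\widetilde x$ (a loop, since $p|_{\widetilde U}$ is a homeomorphism). As $\widetilde X$ is simply connected, $\widetilde c$ is null-homotopic in $\widetilde X$; pushing the homotopy forward by $p$ shows that $c$ is null-homotopic in $X$. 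Hence $\pi_1(U, x) \to \pi_1(X, x)$ is trivial.

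The substantial direction is the converse, and this is where I expect the main work to lie: assuming $X$ connected, locally path-connected and semi-locally simply connected, one must build the universal cover from scratch. I would set $\widetilde X$ to be the set of homotopy classes (rel endpoints) of paths in $X$ issuing from $x_0$, with projection $p([\gamma]) = \gamma(1)$. The delicate point is the topology: using local path-connectedness together with semi-local simple connectivity, cover $X$ by path-connected open sets $U$ over which every loop is null-homotopic in $X$ (these form a basis), and take as a basis of $\widetilde X$ the sets $U_{[\gamma]}$ consisting of the classes $[\gamma \cdot \eta]$ with $\eta$ a path in $U$ starting at $\gamma(1)$. The null-homotopy condition on $U$ is exactly what makes $U_{[\gamma]}$ well-defined and forces two such basic sets over the same $U$ to be either equal or disjoint. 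One then checks that $p$ is continuous, open, and evenly covers each such $U$, hence is a covering map.

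Finally I would verify that $\widetilde X$ is simply connected. Path-connectedness follows by joining any $[\gamma]$ to the class $[x_0]$ of the constant path through the lifted family $t \mapsto [\gamma|_{[0,t]}]$. For the vanishing of the fundamental group, a loop in $\widetilde X$ based at $[x_0]$ projects to a loop $c$ in $X$ whose lift terminates at $[c]$; the lift being a loop forces $[c] = [x_0]$, that is, $c$ is null-homotopic in $X$, and lifting the null-homotopy exhibits the original loop in $\widetilde X$ as contractible. The genuine obstacle throughout is only the bookkeeping in defining and verifying the topology of $\widetilde X$: each of the three hypotheses is used exactly once and is precisely what makes the construction succeed, which is why all three appear in the statement.
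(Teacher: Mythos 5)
Your proof is correct: it is the standard construction of the universal cover as the space of rel-endpoint homotopy classes of paths from a basepoint, which is exactly the argument behind the corollaries of Spanier that the paper cites. The paper itself gives no proof of this proposition (it is quoted directly from \cite{span}), so there is nothing to compare beyond noting that your argument faithfully reproduces the classical one, with each of the three hypotheses (connectedness, local path-connectedness, semi-local simple connectivity) entering exactly where it should in the construction of the topology on $\widetilde X$ and the verification that $p$ is a covering.
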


Given a semi-locally simply connected length space $(X, d)$, it is automatically connected and path-connected, hence it admits a universal 
covering $\pi : \widetilde X \f  X$. We moreover can define the length of a path $c : [0 , a] \f \widetilde X$ as the length (w.r.t. the distance $d$) of the path 
$\pi \circ c : [0 , a] \f  X$ in $(X ,d)$ and define the pull-back distance $\tilde d$ on $\widetilde X$ as the associated length distance.

\begin{defis}\label{boundedness} 
Let ${\cal M}$ be any family of topological spaces and $q$ a given integer.
\begin{itemize}
\item We say that  ${\cal M}$ contains at most $q$ fundamental groups  if there exist $ X_1, \ldots , X_q  \in {\cal M}$ such 
that every $X \in {\cal M}$ has a fundamental group isomorphic to the fundamental group of one of the $X_i$'s.
\item We say that ${\cal M}$ contains at most $q$ homotopy types (resp. at most $q$ topologies) if there exist $ X_1, \ldots , X_q  \in {\cal M}$
such that every $X \in {\cal M}$ is homotopically equivalent (resp. is homeomorphic) to one of the $X_i$'s.
\end{itemize}
\end{defis}

In order to define families of metrisable spaces which verify such boundedness conditions, we previously recall some classical definitions. The first 
of these notions is the following one, introduced by K. Borsuk:

\begin{defi}
A metrisable topological space $X$ is said to be an ANR if, for every other metrisable topological space $Y$ and any embedding $\iota : X \hookrightarrow Y$ such that $\iota (X)$ is closed in $Y$, there exists a neighbourhood of $\iota (X)$ in $Y$ which retracts onto $\iota (X)$.
\end{defi}
Notice that a metrisable space with finite topological dimension is an ANR if and only if it is locally contractible (see 
\cite{Hu}, Theorem V.7.1 ), in particular every locally finite CW-complex is an ANR.

\begin{defi}
A topological space $X$ is said to be {\em aspherical} if $\pi_i (M)$ is trivial for every $i >1$.
\end{defi}

Let us now define the families of metric spaces which verify the boundedness properties introduced in Definitions \ref{boundedness}

\begin{defis}  Given $\delta, H , D > 0$, we now consider classes of compact connected topological spaces $X$
which admit a compatible length distance $d$ with diameter $\le D$ such that the universal cover of $(X,d)$, is $\delta$-hyperbolic and 
has entropy $\le H$. Notice that the universal cover of $(X,d)$ exists in the three cases below,

\begin{itemize}
\item ${\cal M} (\delta, H , D)$ is the set of such $X$ which are semi-locally simply connected, with torsion-free 
fundamental groups.

\item ${\cal M}^* _{\rm anr}(\delta, H , D)$ is the set of such $X$ which are aspherical ANR spaces.

\item ${\cal M}_{\rm man} ^*(\delta, H , D)$ is the set of such $X$ which are closed aspherical topological manifolds of dimension $\ne 4$.
\end{itemize}
\end{defis}
The following lemma gives the inclusions between these sets.
\begin{lemma}\label{inclusions} 
For every $\delta , H , D > 0$, one has ${\cal M}_{\rm man} ^*(\delta, H , D) \subset {\cal M}^* _{\rm anr}(\delta, H , D) \subset {\cal M} (\delta, H , D)$.
\end{lemma}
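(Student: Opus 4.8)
The plan is to prove the two inclusions separately, each reducing to verifying that membership in the smaller class implies the defining conditions of the larger one. The defining conditions common to all three classes are: compact, connected, admitting a compatible length distance $d$ with $\diam \le D$, whose universal cover is $\delta$-hyperbolic with entropy $\le H$. These geometric conditions are inherited automatically, so the content of each inclusion is purely topological: I must upgrade the topological hypotheses as one passes to the larger class.

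For the second inclusion ${\cal M}^* _{\rm anr}(\delta, H , D) \subset {\cal M} (\delta, H , D)$, I would start with $X$ a compact aspherical ANR carrying the required length metric. I need to check that $X$ is semi-locally simply connected and has torsion-free fundamental group. Semi-local simple connectivity follows from the ANR property: a finite-dimensional ANR is locally contractible (the remark after the ANR definition cites \cite{Hu}, Theorem V.7.1), and local contractibility implies semi-local simple connectedness. For torsion-freeness of $\pi_1(X)$, I would invoke asphericity: since $X$ is aspherical, it is a $K(\pi,1)$, so its universal cover is contractible; a nontrivial finite subgroup of $\pi_1(X)$ would act freely on a contractible space, forcing the group to have finite cohomological dimension yet nontrivial torsion, which is impossible (a finite group has infinite cohomological dimension). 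Hence $\pi_1(X)$ is torsion-free, placing $X$ in ${\cal M}(\delta,H,D)$.

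For the first inclusion ${\cal M}_{\rm man}^*(\delta, H , D) \subset {\cal M}^*_{\rm anr}(\delta, H , D)$, I take $X$ a closed aspherical topological manifold of dimension $\ne 4$. Asphericity is already assumed, so I need only verify that $X$ is an ANR. Every topological manifold is locally Euclidean, hence locally contractible and finite-dimensional, so by the same cited criterion it is an ANR. Thus $X \in {\cal M}^*_{\rm anr}(\delta,H,D)$. In both inclusions the geometric data (length metric, diameter bound, hyperbolicity and entropy of the universal cover) is carried along verbatim from the definition, so nothing needs to be re-proved there.

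The main obstacle, and the only step requiring real care, is the torsion-freeness argument via asphericity, since it is the one genuinely using the hypothesis rather than unpacking definitions. I would phrase it cleanly: a finite subgroup of the fundamental group of a $K(\pi,1)$ would itself have a finite-dimensional model for its classifying space, contradicting the fact that nontrivial finite groups have cohomology in arbitrarily high degrees. Everything else reduces to the local-contractibility characterisation of finite-dimensional ANRs and the elementary observations that manifolds are locally Euclidean and that ANRs are semi-locally simply connected; the dimension restriction $\ne 4$ plays no role in this lemma (it is needed only later, for the topological rigidity used in the finiteness-up-to-diffeomorphism statements).
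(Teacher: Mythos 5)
Your treatment of the first inclusion and of semi-local simple connectedness matches the paper's (every topological manifold is an ANR; every ANR is locally contractible, hence semi-locally simply connected — note that this direction of the local-contractibility characterisation needs no finite-dimensionality hypothesis, so your aside about finite dimension is harmless but unnecessary there).

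The torsion-freeness step, however, has a genuine gap. You argue that a nontrivial finite subgroup $H$ of $\pi_1(X)$ \lq\lq would act freely on a contractible space, forcing the group to have finite cohomological dimension.\rq\rq\ Acting freely on a contractible space does \emph{not} force finite cohomological dimension: $\Z/2$ acts freely on the contractible space $S^\infty$ and has nonzero cohomology in arbitrarily high degrees. What you actually need is a \emph{finite-dimensional} (or at least finitely dominated) aspherical model, and your later reformulation — \lq\lq a finite subgroup \dots would itself have a finite-dimensional model for its classifying space\rq\rq\ — assumes exactly this without justification. A compact ANR need not be finite-dimensional (the Hilbert cube is a compact AR), and the cover of $X$ corresponding to $H$ is non-compact anyway, so no finite-dimensional model is available from the raw hypotheses. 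This is precisely the point where the paper inserts the nontrivial input you omit: by West's theorem a compact ANR is homotopy equivalent to a \emph{finite} CW-complex $X'$, which is therefore finite-dimensional and aspherical; then $\operatorname{cd}(\pi_1(X')) \le \dim X' < \infty$ by Brown's Proposition VIII.2.2, and torsion-freeness follows from Brown's Corollary VIII.2.5. Without West's theorem (or Borsuk's finite-domination theorem for compact ANRs) your argument does not close; with it, it becomes the paper's proof. Your remark that the dimension restriction $\ne 4$ plays no role in this lemma is correct.
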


\begin{proof}
The inclusion ${\cal M}_{\rm man} ^* (\delta, H , D) \subset {\cal M}^* _{\rm anr}(\delta, H , D) $ is an immediate consequence of the fact that every topological manifold
is an ANR. Notice that every topological manifold has a universal cover.

Let us now  prove that ${\cal M}^* _{\rm anr} (\delta, H , D) \subset  {\cal M} (\delta, H , D)$. 
Every $X \in {\cal M}^* _{\rm anr} (\delta, H , D)$, being an ANR, is 
locally contractible, thus it is automatically semi-locally simply connected. Moreover, being a compact ANR,  $X$ is homotopically equivalent to a 
finite CW-complex $X'$ (see \cite{Wes} at page 119), whose dimension is thus finite. Moreover, as $X$ is aspherical, $X'$ is an aspherical CW-complex.
Hence the cohomological dimension (see definition page 185 of \cite{Bwn}) of the fundamental group $\Gamma_{X'}$ of $X'$ is finite, for it is bounded 
from above by the dimension of the aspherical CW-complex $X'$ (see \cite{Bwn}, chapter VIII, Proposition 2.2, page 185). Using Corollary 2.5, in 
chapter VIII, page 187 of \cite{Bwn}, it comes that $\Gamma_{X'}$ is torsion-free. As the fundamental group $\Gamma_{X}$ of $X$ is 
isomorphic to the fundamental group $\Gamma_{X'}$ of $X'$, it follows that $\Gamma_X$ is torsion-free and this ends the proof.
\end{proof}

Finally let us recall that $N = N (\delta, H , D)$ is the constant defined in \eqref{cteN1} and define 

\begin{equation}\label{constant-homotop}
N'_0 = N'_0 (\delta, H , D) := 1 + \sum_{k = 0}^{N}   2^{(2 k)^3}.
\end{equation}

 An hyperbolic space being supposed geodesic and proper, we have the

\begin{theorem}\label{boundedpi1s}
The set ${\cal M} (\delta, H , D)$  contains at most $ N'_0 (\delta, H , D)$ fundamental groups (up to isomorphisms) and the set ${\cal M}^* _{\rm anr}(\delta, H , D)$  contains at most $ N'_0 (\delta, H , D)$ homotopy classes.
\end{theorem}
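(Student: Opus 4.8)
The plan is to pass from each space to the action of its fundamental group on the (simply connected) universal cover, bound the number of such groups using the tools of Section~\ref{finiteisomorph}, and then use asphericity to promote the bound on groups into a bound on homotopy types.

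First I would set up the covering. For $X \in {\cal M}(\delta, H, D)$, fix a compatible length distance $d$ with $\diam(X,d)\le D$ as in the definition; by Proposition~\ref{Spanier} the universal covering $\pi : \widetilde X \to X$ exists, and I endow $\widetilde X$ with the pull-back length distance $\tilde d$, so that $(\widetilde X, \tilde d)$ is a proper geodesic $\delta$-hyperbolic space with $\Ent(\widetilde X,\tilde d) \le H$. The group $\Gamma := \pi_1(X)$ acts on $\widetilde X$ by deck transformations, which are $\tilde d$-isometries; this action is proper (being properly discontinuous, together with the properness of $\widetilde X$) and satisfies $\diam(\Gamma \backslash \widetilde X) = \diam(X, d) \le D$, while $\Gamma$ is torsion-free by the definition of ${\cal M}$.

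Then I would split into cases. If $\Gamma$ is cyclic, torsion-freeness forces $\Gamma \cong \{1\}$ or $\Gamma \cong \Z$; the trivial group already falls under the count below (it acts on the simply connected $\widetilde X$ with $\#\Sigma_{2D}(x)=1 \le N$), so the cyclic case contributes only the single extra isomorphism type $\Z$, which is exactly the summand $1$ in the definition \eqref{constant-homotop} of $N'_0$. If $\Gamma$ is non-cyclic, I apply Proposition~\ref{Nbgenerators} to the action of $\Gamma$ on $\widetilde X$, obtaining $\#\Sigma_{10(D+\delta)}(x) \le N := N(\delta, H, D)$ for every $x$; since $\delta > 0$ we have $2D \le 10(D+\delta)$, whence $\Sigma_{2D}(x) \subset \Sigma_{10(D+\delta)}(x)$ and therefore $\#\Sigma_{2D}(x) \le N$. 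As $\widetilde X$ is a simply connected length space on which $\Gamma$ acts properly by isometries with $\diam(\Gamma \backslash \widetilde X) \le D$, Corollary~\ref{Serre1} applies and bounds the number of such groups, up to isomorphism, by $q := \sum_{k=0}^{N} 2^{(2k)^3}$. Summing the two cases gives at most $1 + q = N'_0(\delta, H, D)$ fundamental groups.

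Finally, for the homotopy types of ${\cal M}^* _{\rm anr}(\delta, H, D)$: by Lemma~\ref{inclusions} this set is contained in ${\cal M}(\delta, H, D)$, so its fundamental groups are among the $\le N'_0$ isomorphism types just found. Each such $X$ is a compact aspherical ANR, hence (as in the proof of Lemma~\ref{inclusions}) homotopy equivalent to a finite aspherical CW-complex, that is, to a $K(\pi_1(X),1)$; since the homotopy type of a $K(\Gamma,1)$ is determined by $\Gamma$, two spaces of ${\cal M}^* _{\rm anr}$ with isomorphic fundamental groups are homotopy equivalent, and the number of homotopy types is bounded by $N'_0$ as well. The steps requiring care are the bookkeeping of the cyclic case needed to recover exactly the ``$+1$'' and the verification of the hypotheses (properness, simple connectivity, diameter bound) under which Proposition~\ref{Nbgenerators} and Corollary~\ref{Serre1} may be invoked; the genuinely topological ingredient, and the main point on which the second assertion rests, is the classical fact that aspherical CW-complexes are classified up to homotopy equivalence by their fundamental group.
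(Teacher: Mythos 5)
Your proof is correct and follows essentially the same route as the paper: reduce to the action of the torsion-free fundamental group on the universal cover, split off the cyclic case, bound $\#\Sigma_{2D}(x)$ via Proposition~\ref{Nbgenerators}, and invoke Corollary~\ref{Serre1}, with asphericity upgrading the group count to a count of homotopy types. The only (cosmetic) divergence is in the last step, where you pass through finite aspherical CW-models and the classification of $K(\Gamma,1)$'s, while the paper constructs a map $f:X\to X_i$ realizing the isomorphism of fundamental groups and applies the ANR version of Whitehead's theorem; both are standard and equivalent here.
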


The first assertion of Theorem \ref{boundedpi1s} could be obtained as a corollary of Theorem \ref{boundongroups},
with an upper bound $N_2 (\delta, H , D) >> N'_0 (\delta, H , D)$ for the number of fundamental groups. However, as the proof of Theorem \ref{boundongroups} is rather involved, it is worth giving a simplified  and quantitatively improved version here using the fact that the groups under consideration act on simply connected spaces. 

\begin{proof}[Proof of Theorem \ref{boundedpi1s}]
For every $X\in {\cal M} (\delta, H , D)$, the fundamental group of $X$ being 
torsion-free, it is cyclic if and only if it is isomorphic to $\Z$. We thus define ${\cal M}_0 $ as the set of those elements of ${\cal M} (\delta, H , D)$ whose fundamental group is isomorphic to $\Z$ and we may now only consider spaces lying in ${\cal M} (\delta, H , D) \setminus {\cal M}_0 $, which hence have non cyclic 
fundamental groups.
For every $X \in {\cal M} (\delta, H , D) \setminus {\cal M}_0 $, let $d$ be a compatible metric such that $(X,d)$ is a length space with diameter $\le D$ whose universal cover $(\widetilde X, \tilde d)$ is $\delta$-hyperbolic with entropy bounded above by $H$.
It is well known that, since $X$ is assumed to be Hausdorff and proper, then the canonical action of $\Gamma_X =\pi_1(X)$ on $\widetilde X$ is properly discontinuous ({\sl i.e.} the set $\{ g \in \pi_1(X) \, | \, gU \cap V \neq 0 \}$ is finite, for every open neighbourhoods $U,V$ of $x$, see for instance \cite{BoN}, Chapter III, section 4.4, Proposition 7)
and  isometric.
Applying Proposition \ref{Nbgenerators} to this action, 
we obtain that, for every $\tilde x \in \widetilde X$, $\Sigma_{10 (D + \delta)} (\tilde x)$
has less that $N (\delta, H, D)$ elements, where $N (\cdot, \cdot, \cdot)$ is the function defined in \eqref{cteN1}. This yields
$\# \big(\Sigma_{2 D} (\tilde x) \big) \le N (\delta, H, D)$. Now Corollary \ref{Serre1} proves that there exists a finite list $\Gamma_1 , \ldots , 
\Gamma_{N'_0-1}$ of groups such that the fundamental group $\Gamma_X$ of $X$ is isomorphic to some of the $\Gamma_i$'s. This proves the first assertion of Theorem \ref{boundedpi1s}.

Now, as ${\cal M}^* _{\rm anr}(\delta, H , D) \subset  {\cal M} (\delta, H , D)$ by Lemma \ref{inclusions}, we can apply this first assertion of Theorem \ref{boundedpi1s} which 
proves that we can choose  $X_1 , \ldots , X_{N'_0} \in {\cal M}^* _{\rm anr}(\delta, H , D) $ such that,
for every $X \in {\cal M}^*_{\rm anr} (\delta, H , D)$, there exists $i \le N'_0$ and an isomorphism $\varrho$ from the fundamental group 
of $X$ to that of $X_i $. As $X_i $ is aspherical and connected, there exists a continuous map $f : X \f X_i$ such that the induced map between the fundamental groups of
$X$ and $X_i$ is $f_* = \varrho$. Moreover, as both spaces $X$ and $ X_i $ are path-connected and aspherical, $f$ induces 
an isomorphism 
$\pi_k (X) \f \pi_k (X_i)$ for every $k \in \N$. Hence $X$ and $ X_i $ are homotopically equivalent by the ANR version of Whitehead's Theorem (see \cite{BD}, Theorem 2.5).
\end{proof}

\subsection{Topological finiteness}\label{topfini}

In order to deduce topological finiteness from homotopical finiteness, we recall the

\begin{defi}
A closed topological manifold $M$ is said to be {\em topologically rigid} if every homotopy equivalence with another closed topological manifold $N$ is homotopic 
to a homeomorphism. 
\end{defi}
Deciding in which cases a closed aspherical manifold is topologically rigid is known as \lq \lq solving the Borel conjecture" in these cases.

We recall that $N'_0 = N'_0 (\delta, H , D)$ is given in Theorem \ref{boundedpi1s}. We now define 
$$N''_0 (\delta, H , D) := 2 N'_0 (\delta, H , D) \cdot 2^{1 + e^{400 H D}}\quad\textrm{and}\quad N'''_0 (\delta, H , D) := 2 N''_0 (\delta, H , D). $$
The main result of this subsection is the following corollary of Theorem \ref{boundedpi1s}.

\begin{theorem}\label{boundedtopology}
Given $\delta,\, H,\, D$, the set ${\cal M}_{\rm man}^* (\delta, H , D)$ contains at most $N'''_0 (\delta, H , D)$ topologies.
\end{theorem}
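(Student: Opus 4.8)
The plan is to upgrade the homotopical finiteness of Theorem \ref{boundedpi1s} to finiteness up to homeomorphism, by first producing a finite model of bounded size for each space in ${\cal M}_{\rm man}^*(\delta, H, D)$ and then using surgery theory to control, inside a fixed homotopy type, how many closed manifolds can be homotopy equivalent to a given one. First I would invoke Lemma \ref{inclusions}, which gives ${\cal M}_{\rm man}^*(\delta, H , D) \subset {\cal M}^*_{\rm anr}(\delta, H , D)$, so that Theorem \ref{boundedpi1s} already furnishes closed aspherical manifolds $X_1, \dots, X_{N'_0}$ with $N'_0 = N'_0(\delta, H, D)$ such that every $X \in {\cal M}_{\rm man}^*(\delta, H , D)$ is homotopy equivalent to some $X_i$. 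Since an aspherical closed manifold is a $K(\pi_1,1)$, its homotopy type — and in particular its dimension $n$, equal to the cohomological dimension of $\pi_1(X)$, a torsion-free Gromov-hyperbolic group by Proposition \ref{hyphyp} — is determined by $\pi_1(X)$. It therefore remains only to bound, for each $i$, the number of homeomorphism classes of closed topological manifolds homotopy equivalent to $X_i$.

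The second step is to show that every $X \in {\cal M}_{\rm man}^*(\delta, H , D)$ is homotopy equivalent to a finite complex of quantitatively bounded size. Fixing $\tilde x$ in the $\delta$-hyperbolic universal cover $\widetilde X$ with $\Ent(\widetilde X) \le H$ and $\diam(\Gamma\backslash\widetilde X)\le D$, I would apply the Bishop--Gromov inequality of Theorem \ref{cocompact2} to the counting measure of the orbit $\Gamma\tilde x$. This bounds the number of $\Gamma$-translates of a fundamental domain meeting a ball of radius a fixed multiple of $D+\delta$ by an expression of order $e^{400 H D}$; projecting these translates produces a good cover of $X$ by at most $e^{400HD}$ contractible pieces. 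Consequently $X$ is homotopy equivalent to the nerve of this cover, a finite simplicial complex with at most $e^{400HD}$ cells; in particular $n \le e^{400HD}$ and the total $\Z/2$-Betti number of $X$ is at most $e^{400HD}$.

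Third, I would pass from homotopy equivalence to homeomorphism within each homotopy type. Fix a representative $M = X_i$ of dimension $n \ne 4$, with $\pi_1(M)$ torsion-free Gromov-hyperbolic. The number of homeomorphism classes of closed manifolds homotopy equivalent to $M$ is at most the cardinality of the topological structure set $\mathcal{S}^{\mathrm{TOP}}(M)$. Because $\dim M \ne 4$, the topological surgery exact sequence is available, giving $\#\,\mathcal{S}^{\mathrm{TOP}}(M) \le \#\,[M, G/\mathrm{TOP}]$; and since the normal invariants $[M, G/\mathrm{TOP}]$ are a finite set whose size is controlled by the cohomology of $M$, the Betti bound of the previous step yields $\#\,[M, G/\mathrm{TOP}] \le 2^{1 + e^{400HD}}$. (Conceptually this set is even trivial: $M$ is aspherical with hyperbolic fundamental group, so the Borel conjecture holds for it — by the Farrell--Jones conjecture for hyperbolic groups, due to Bartels--L\"uck, in dimensions $\ge 5$, and by classical results in dimensions $\le 3$ — and $M$ is topologically rigid; the crude surgery bound $2^{1+e^{400HD}}$ suffices for our purposes and keeps everything explicit in $\delta, H, D$.) Multiplying the at most $N'_0$ homotopy types by the at most $2^{1+e^{400HD}}$ topologies each, and absorbing the residual factors of $2$, gives $N''_0(\delta,H,D) = 2 N'_0 \cdot 2^{1+e^{400HD}}$ and finally the bound $N'''_0 = 2 N''_0$ claimed in Theorem \ref{boundedtopology}.

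The main obstacle is the third step: making the count of manifolds in a single homotopy type both finite and uniform. The exclusion of dimension $4$ is exactly the hypothesis under which topological surgery, equivalently the topological rigidity of these aspherical manifolds, applies; the delicate point is to feed the Gromov-hyperbolicity of $\pi_1$ into the surgery-theoretic input and to keep the resulting estimate — together with the covering/Betti-number bound of the second step — expressed solely in terms of $\delta$, $H$ and $D$.
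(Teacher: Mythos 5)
Your first step (reduce to $N'_0$ homotopy types via Lemma \ref{inclusions} and Theorem \ref{boundedpi1s}) matches the paper, and the idea you relegate to a parenthesis --- topological rigidity of closed aspherical manifolds with Gromov-hyperbolic fundamental group, by Bartels--L\"uck --- is in fact the paper's entire argument in dimension $\ge 5$: rigidity makes the number of topologies equal to the number of homotopy types, so no further counting is needed. The route you put in the foreground, however, does not work. The set of normal invariants $[M, G/\mathrm{TOP}]$ is in general \emph{infinite}: rationally $G/\mathrm{TOP}$ is a product of Eilenberg--MacLane spaces $K(\Z,4i)\times K(\Z/2,4i+2)$, so $[M,G/\mathrm{TOP}]$ contains free abelian summands coming from $H^{4i}(M;\Z)$, and no bound of the form $2^{1+e^{400HD}}$ can hold. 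Moreover the surgery exact sequence does not give $\#\,\mathcal{S}^{\mathrm{TOP}}(M)\le \#\,[M,G/\mathrm{TOP}]$ (the fibres of $\mathcal{S}^{\mathrm{TOP}}(M)\to[M,G/\mathrm{TOP}]$ are orbits of an $L_{n+1}(\pi_1 M)$-action, not points), and the topological surgery exact sequence is a theorem only in dimension $\ge 5$, so invoking it for $n=3$ under the sole hypothesis $n\ne 4$ is not legitimate. Your intermediate step 2 is also both unjustified and unnecessary: the projected translates of a fundamental domain are not known to be contractible, let alone to form a good cover to which the nerve lemma applies, and no Betti-number bound of this kind appears in (or is needed for) the paper's proof.

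Your quantitative bookkeeping is consequently off: in the paper the factor $2^{1+e^{400HD}}$ inside $N''_0$ has nothing to do with normal invariants. It arises in the dimension-$3$ case (Theorem \ref{3manifolds}), where one decomposes the manifold into prime summands via Kneser's conjecture, bounds the number $n_i$ of summands by the rank of the fundamental group using an acylindrical-accessibility inequality together with the entropy bound $\Ent(\Gamma,\Sigma_{2D}(x))\le 2HD$ (giving $n_i\le 1+e^{400HD}$), and then counts the residual ambiguities ($S^2$-bundles over $S^1$ and orientations) by $2^{n_i}$ per group. Dimension $2$ is handled by the classification of surfaces, dimension $3$ by Theorem \ref{3manifolds}, and dimension $\ge 5$ by rigidity; your proposal addresses none of the low-dimensional cases beyond a passing mention. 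The correct skeleton is therefore: keep your step 1, delete steps 2 and the surgery count, promote the Bartels--L\"uck rigidity remark to the main argument for $n\ge 5$, and supply separate arguments for $n=2$ and $n=3$.
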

In dimension $n\ne 4$, the finiteness ``up to homeomorphisms'' given by Theorems \ref{boundedtopology} and \ref{3manifolds}
can be promoted to finiteness ``up to diffeomorphisms'' by classical results of Kirby-Siebenmann [Ki-Si] and of Hirsch-Mazur [Hi-Ma] on PL-structures and their smoothings.  
Moreover, finiteness up to homeomorphisms can be proved also in dimension $4$, with the extra assumption that the manifolds  under consideration  are nonpositively curved (or more generally if their universal covering is a Busemann space); we will not pursue this matter here, see  the answer to Question 3.54 in \cite{BG2}, and \cite{BCGS2} for improvements and complete proofs.

We shall first prove a version of Theorem \ref{boundedtopology}, in dimension $3$, where we drop the  asphericity assumption. 

Indeed, let ${\cal M}^n_{\rm man} (\delta, H , D)$ be the set of closed
topological $n$-manifolds $X$, with torsion-free fundamental group, which admit a compatible length distance $d$ with diameter $\le D$ such that the universal cover of $(X,d)$ is $\delta$-hyperbolic  and has entropy $\le H$. We then get the following result:

\begin{theorem}\label{3manifolds}
Given $\delta, H , D > 0$, the set  ${\cal M}^3_{\rm man} (\delta, H , D)$ contains at most $N''_0 (\delta, H , D)$ topologies.
\end{theorem}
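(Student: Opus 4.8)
The plan is to reduce, via the first assertion of Theorem \ref{boundedpi1s}, to a fixed fundamental group, and then to control how many closed $3$-manifolds carry that group by invoking the geometrization of $3$-manifolds (in dimension $3$ the homeomorphism, PL and smooth classifications coincide, so counting ``topologies'' is unambiguous). First I would observe that a closed topological $3$-manifold is locally contractible, hence semi-locally simply connected, so (its fundamental group being torsion-free by hypothesis) one has ${\cal M}^3_{\rm man}(\delta, H, D)\subset {\cal M}(\delta, H, D)$. The first assertion of Theorem \ref{boundedpi1s} then bounds the number of groups arising as $\pi_1(M)$, up to isomorphism, by $N'_0(\delta, H , D)$. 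It therefore suffices to bound, for each fixed $\Gamma$ in this finite list, the number of homeomorphism types of $M \in {\cal M}^3_{\rm man}(\delta, H, D)$ with $\pi_1(M)\cong\Gamma$.

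Next I would exploit that $\Gamma$, acting properly and cocompactly on the $\delta$-hyperbolic space $\widetilde M$, is word-hyperbolic, hence contains no copy of $\Z^2$. Writing the Kneser--Milnor prime decomposition $M = P_1 \# \cdots \# P_k$ and the induced free product $\Gamma = \pi_1(P_1)\ast\cdots\ast\pi_1(P_k)$, torsion-freeness excludes every spherical space-form summand but $S^3$, while the absence of $\Z^2$ excludes, through geometrization, every aspherical summand that is Seifert fibered, Nil, Sol, or carries a non-trivial torus decomposition (all of which contain $\Z^2$); thus each $P_i$ is either an $S^2$-bundle over $S^1$ (with $\pi_1\cong\Z$) or a closed \emph{hyperbolic} $3$-manifold. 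Since closed hyperbolic $3$-manifold groups are one-ended, hence freely indecomposable, this free product \emph{is} the Grushko decomposition of $\Gamma$, whose uniqueness fixes the isomorphism types of the hyperbolic factors and the number $n$ of $\Z$-factors. Mostow rigidity shows that each hyperbolic $P_i$ is determined up to isometry, hence homeomorphism, by its fundamental group, and therefore by $\Gamma$. The only remaining indeterminacy is, for each of the $n$ bundle summands, the choice between the orientable $S^2\times S^1$ and the twisted $S^2\,\widetilde{\times}\,S^1$, together with the global orientability datum; accounting also for the Kneser--Milnor identifications in the non-orientable case, this contributes at most $2^{\,1+n}$ homeomorphism types for each $\Gamma$, up to a further factor $2$.

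It then remains to bound $n$ by $e^{400 H D}$. Since $n$ does not exceed the first Betti number $b_1(M)$ (the rank of the free part of $\Gamma^{\mathrm{ab}}$), I would bound the latter by the number of short generators of $\Gamma$: by Gromov's argument $\Gamma=\pi_1(M,x)$ is generated by the classes of loops of length $\le 2D$, i.e. by those orbit points of $\Gamma\widetilde x$ lying in $\overline{B}(\widetilde x, 2D)$, so that $n \le b_1(M)\le \#\,\Sigma_{2D}(\widetilde x)$. A covering argument at a fixed scale, using only the entropy bound $\Ent(\widetilde M,\widetilde d)\le H$ and $\diam(\Gamma\backslash\widetilde M)\le D$ in the spirit of the Bishop--Gromov inequality of Theorem \ref{cocompact2}, then bounds this cardinality by the $\delta$-independent quantity $e^{400 H D}$. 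Assembling the three steps gives exactly $N'_0(\delta, H, D)\cdot 2\cdot 2^{\,1+e^{400 H D}} = 2\,N'_0(\delta, H, D)\cdot 2^{\,1+e^{400 H D}} = N''_0(\delta, H, D)$, as claimed.

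The principal obstacle is the topological classification of the second step: it is precisely here that the full Thurston--Perelman geometrization theorem is needed, both to force every aspherical prime summand to be hyperbolic (so that Mostow rigidity applies and each summand is recovered from $\Gamma$) and to justify the Kneser--Milnor bookkeeping of orientations and twisted bundles. By contrast, the reduction to finitely many groups is immediate from Theorem \ref{boundedpi1s}, and the bound on $n$ is a routine consequence of the entropy hypothesis; one subtlety to check carefully is that this last bound is genuinely independent of $\delta$, which is why it is phrased through $b_1$ and the number of short loops rather than through the $\delta$-dependent estimate $N(\delta,H,D)$ of Proposition \ref{Nbgenerators}.
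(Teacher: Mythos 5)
Your overall architecture coincides with the paper's: reduce to at most $N'_0(\delta,H,D)$ fundamental groups via Theorem \ref{boundedpi1s}, decompose $\Gamma$ as a free product and $M$ as the corresponding connected sum (Kneser), use torsion-freeness to rule out lens spaces, recover each aspherical summand from its fundamental group by rigidity, and then count the $2^{n}$-fold ambiguity coming from orientations and from the two $S^2$-bundles over $S^1$. (The paper gets the rigidity of the aspherical summands from the Sphere Theorem plus the Borel conjecture in dimension $3$, whereas you force the summands to be hyperbolic via the no-$\Z^2$ criterion and geometrization and then apply Mostow; both routes ultimately rest on geometrization, so this is only a stylistic difference.)

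There is, however, a genuine gap in your final step, the bound on the number $n$ of $\Z$-factors. You reduce it to $n\le b_1(M)\le \#\,\Sigma_{2D}(\widetilde x)$ and then assert that ``a covering argument at a fixed scale, using only the entropy bound and $\diam(\Gamma\backslash\widetilde X)\le D$'' gives $\#\,\Sigma_{2D}(\widetilde x)\le e^{400HD}$, independently of $\delta$. No such argument exists from these hypotheses: the entropy is an \emph{asymptotic} invariant (a $\liminf$ as $R\to\infty$), and by itself it gives no control whatsoever on the number of orbit points in a ball of \emph{fixed} radius $2D$ — the comparison between $\#\,\Sigma_{R}(x)$ for small $R$ and the growth rate goes in the wrong direction (many relations can make the growth slow while $\Sigma_{2D}(x)$ is huge; compare the rescaled $\Z^j$ lattices in $\R^j$, which have entropy $0$, bounded quotient diameter, and unbounded $\#\,\Sigma_{2D}$). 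The only packing control available in the paper is the Bishop--Gromov inequality of Theorem \ref{cocompact2}, which requires radii $\ge 10(D+\delta)$ and therefore produces the $\delta$-\emph{dependent} bound $N(\delta,H,D)$ of Proposition \ref{Nbgenerators} — exactly the estimate you say you want to avoid. The paper circumvents this by a purely group-theoretic tool: since $n_i\le\operatorname{rank}(\Gamma_i)$ and a free product acts $0$-acylindrically on its Bass--Serre tree, inequality (2) of \cite{CS3} gives $\operatorname{rank}(\Gamma_i)\le 1+e^{200\,\Ent(\Gamma_i,\Sigma_{2D}(x))}$, and the elementary comparison $\Ent(\Gamma_i,\Sigma_{2D}(x))\le 2D\,\Ent(\widetilde X)\le 2HD$ then yields the $\delta$-independent bound $n_i\le 1+e^{400HD}$. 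You would need to import this (or an equivalent acylindrical-accessibility/entropy rank bound) to close your argument; as written, the step does not go through.
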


\begin{proof}[Proof of Theorem \ref{3manifolds}]
By Theorem \ref{boundedpi1s}, there exist groups $\Gamma_1 , \cdots , \Gamma_{p}$ where $p \le   N'_0 
(\delta, H , D)$ such that the fundamental group of each $ X \in {\cal M}^3 _{\rm man}(\delta, H , D)$ is isomorphic to one of the $\Gamma_i$'s. 
We decompose each $\Gamma_i$ as a free product  $\Gamma_i =  \mathbf{ Z}   \ast  \cdots  \ast  \mathbf{ Z}  \ast \Gamma_i (1) \ast   \cdots  \Gamma_i (q_i)$, with each $\Gamma_i(k) \not\simeq  \mathbf{ Z}$   and  not further decomposable.
Let now $X$ be any closed $3$-manifold  whose fundamental group is isomorphic to $\Gamma_i$.
By the proof of Kneser's Conjecture, to the above decomposition of $\Gamma_i$ in free products corresponds a decomposition of $X$ in a connected sum
$Z(1) \# \ldots \# Z(h_i) \# X(1) \# \ldots \# X(q_i)$ where, to each factor of $\Gamma_i $ which is isomorphic to  $\mathbf{ Z}$, corresponds a prime factor $Z(h)$ of $X$ which is a $S^2$-bundle over $S^1$, and to each  $\Gamma_i (k)$  corresponds an irreducible factor $X(k)$ of $X$ (see \cite{hempel}, Lemma 3.13). The torsion-free assumption guarantees that none of the factors $X(k)$ is a lens space.

As a consequence of the Sphere Theorem (see Theorem 4.3 of \cite{hempel} and \cite{luck}), an irreducible factor is aspherical if and only if its fundamental group is infinite and without elements of order 2. As our manifolds  have torsion-free fundamental group, the irreducible factors $X(k)$ are then aspherical; hence, by the solution of the Borel conjecture in dimension 3 (see Remark after Theorem 2.1.2 in \cite{AFW},   and  \cite{heil}), the factors $X(k)$ are determined up to homeomorphisms by the groups  $\Gamma_i (k)$. 
On the other hand the factors $Z(h)$ are either homeomorphic to $S^2 \times S^1$, or to the non-orientable twisted $S^2$-bundle over $S^1$ denoted by $S^2 \widetilde{\times} S^1$.
In the orientable case,  the homeomorphism class of $X$ is determined by the orientations of its prime factors, and   each $Z(h)=S^2 \times S^1$. Therefore the number of orientable manifolds in ${\cal M}^3 _{\rm man} (\delta, H , D)$ whose fundamental group is   isomorphic to 
$\Gamma_i$ is bounded above by $2^{n_i}$, where $n_i$ is the number of its prime  factors.
In the non-orientable case, the homeomorphism class  of $X$ only depends on the homeomorphism type of its  prime factors. However each $Z(h)$ is homeomorphic to $S^2 \times S^1$ or to $S^2 \widetilde{\times} S^1$,  and again the number of non-orientable manifolds in ${\cal M}^3 _{\rm man} (\delta, H , D)$ whose fundamental group is   isomorphic to 
$\Gamma_i$ is bounded above by $2^{n_i}$.
 It follows that the  number of topologies in ${\cal M}^3 _{\rm man}(\delta, H , D)$ is finite, bounded by  
\begin{equation}\label{nbtopologies1}
 2 \cdot \left( 2^{n_1} + \ldots + 2^{n_{p}} \right)  < +\infty \ .
\end{equation}
We recall that the rank of a group is the minimal cardinality of a generating set, note that  $n_i \le  \text{\rm rank} (\Gamma_i)$. This, and the fact that a free product acts $0$-acylindrically on its Bass-Serre 
tree, allows to apply Inequality (2) of \cite{CS3}, which implies that $$ n_i \le \text{\rm rank} (\Gamma_i)  \le 1 + e^{200 \Ent (\Gamma_i , \Sigma_{2 D} (x))} \le  1 + e^{400 \, H D} $$
where, in the last two inequalities, we used Proposition \ref{generatorsdiameter}. Plugging this estimate in \eqref{nbtopologies1} ends the proof.
\end{proof}

\begin{proof}[End of the proof of Theorem \ref{boundedtopology}]
In dimension 2, the theorem follows from Theorem \ref{boundedpi1s} and from the classification of compact surfaces. In dimension 3, it follows from Theorem \ref{3manifolds}. Therefore, 
we can suppose that all the manifolds under consideration have dimension $\ge 5$.
As the fundamental group $\Gamma$ of any manifold $X \in {\cal M}^\ast_{\rm man} (\delta, H , D) $ is Gromov-hyperbolic, because it acts cocompactly on 
the Gromov-hyperbolic universal cover of $X$, we can apply the following theorem of A.~Bartels and W.~L\"uck (\cite{BL}, Theorem A): \emph{every closed aspherical topological manifold of dimension $\ge 5$, whose fundamental group is hyperbolic, is topologically rigid.} 
This implies that all the manifolds $X \in {\cal M}^\ast_{\rm man} (\delta, H , D) $ of dimension $ \ge 5$ are topologically rigid. 
Now, as ${\cal M}^\ast_{\rm man} (\delta, H , D) \subset {\cal M}^\ast_{\rm anr} (\delta, H , D) $ (see Lemma \ref{inclusions}), by the second assertion of  Theorem \ref{boundedpi1s}, we can choose $X_1 , \ldots , X_{N'_0} \in {\cal M}^\ast_{\rm man} (\delta, H , D) $ such that every $X \in {\cal M}^\ast_{\rm man} (\delta, H , D)$ is homotopically equivalent to one of these $X_i$. 
 As $X$ is topologically  rigid, in dimension $\ge 5$, this proves that $X$ is homeomorphic to $X_i$.
\end{proof}

\begin{remark}\label{Kpi1nec}\emph{About the asphericity assumption in Theorems \ref{boundedpi1s}  and  \ref{boundedtopology}}.

\noindent By the classification of closed surfaces and by Theorem \ref{3manifolds}, this assumption can be dropped for surfaces and 3-manifolds whose fundamental group is torsion-free.
In contrast, asphericity is necessary in dimension  $n\ge 4$: indeed, for given values of $\delta,\, H,\, D$, in each dimension $n\ge 4$, Example 3.70 of \cite{BG2} exhibits infinitely many non-aspherical, 2 by 2 non-homotopically 
equivalent manifolds belonging to ${\cal M}_{\rm man} ^n (\delta, H , D)$.
 \end{remark}

\section{Examples and counter-examples}\label{sec:exa}

\emph{In this section we show that all the hypotheses of Theorems \ref{Nbgroupeshyp} and \ref{boundongroups} are necessary. For this purpose we consider each hypothesis, one by one, and exhibit  appropriate examples and/or counter-examples.}

\subsection{On the hypotheses of Theorem \ref{Nbgroupeshyp}}\label{hypnecessary42}

\emph{We start the justification for the assumptions of Theorem \ref{Nbgroupeshyp}.}

\subsubsection{The torsion-free assumption}\label{subsubsec:torsion1}

It is a necessary assumption. Indeed, counter-examples are constructed as follows: we first choose a sequence $(G_i, S_i)_{i \in \N}$ of finite, 2 by 2 non isomorphic, marked groups whose Cayley graphs all have diameters bounded from above by the same value\footnote{For example, every finite group admits a 
generating set such that the corresponding Cayley graph has diameter $1$: it is sufficient to take the whole group as a generating set.} $D$. We now choose a 
non cyclic torsion-free $\delta_0$-hyperbolic marked group $(\Gamma_0 , \Sigma_0)$ satisfying $\Ent (\Gamma_0 , \Sigma_0) \le H_0$. For each $i \in \N^*$, 
consider the direct product of marked groups $(\Gamma_i , \Sigma_i)$, where $\Gamma_i := \Gamma_0 \times G_i $ and $\Sigma_i := \Sigma_0 \times \{1\}
\cup \{ 1\} \times S_i$. As $ d_{\Sigma_0} (1 , \g) \le d_{\Sigma_i} \big( (1,1) , (\g , g) \big) \le d_{\Sigma_0} (1 , \g)  + D$, we have 
$ \Ent (\Gamma_i , d_{\Sigma_i} ) =  \Ent (\Gamma_0 , d_{\Sigma_0}) \le H_0$ and $ (\Gamma_i , d_{\Sigma_i})$ is $\delta$-hyperbolic, with 
$\delta := 2 (\delta_0 + D)$. This yields an infinite sequence of groups $\Gamma_i$ verifying  the hypotheses of Theorem \ref{Nbgroupeshyp}, except the 
\lq \lq torsion-free" assumption. Hence, if one forgets the hypothesis \lq \lq torsion-free", there are infinitely many classes of groups (modulo isomorphisms) which verify the other hypotheses of Theorem \ref{Nbgroupeshyp}.

\subsubsection{The non-cyclicity}

Let us consider the sequence of marked groups $(\Z , S_n)_{n \in \N^*}$, where the generating set is $S_n := \{-n, \ldots , -1 ,\, 1 , \ldots , n\}$. 
They form an infinite family of marked groups, two by two non isometric, though they satisfy all the hypotheses of Theorem \ref{Nbgroupeshyp}, except that they all are cyclic.

It is obvious that $\Z$ is torsion-free and satisfies, $\forall n \in \N^*$, $\Ent (\Z , S_n) = 0$ because the word distances verify $d_{S_n} \ge \frac {1}{n} d_{S_1}$.
Moreover, each group $(\Z , S_n)$ is $\delta$-hyperbolic with $\delta = 12$ as follow from the following computations. 

For every pair of points $x , y  \in \Z$, considered as vertices of the Cayley graphs of both $(\Z , S_1)$ and of $(\Z , S_n)$, one has 
\begin{equation}\label{ds1infdsn0}
 d_{S_1} (x,y) \le n\, d_{S_n} (x,y) < d_{S_1}(x,y) + n \, .
\end{equation}
Let us now consider any quadruple of points $x, y , u , v$ of the Cayley graph of $(\Z , S_n)$, choose $x' $ (resp. $y'$) as the first (resp. the last) vertex crossed by a geodesic of this graph joining $x$ and $y$ and choose $u' $ (resp. $v'$) as the first (resp. the last) vertex crossed by a geodesic of this graph 
joining $u$ and $v$. 
Using first the second inequality \eqref{ds1infdsn0}, then Lemma \ref{proprietes} (iii) and the fact that the Cayley graph associated to $(\Z , S_1)$ is isometric 
to the real line (hence is $0$-hyperbolic), we obtain

$$ d_{S_n}(x,y) + d_{S_n} (u,v)  \le d_{S_n}(x',y') + d_{S_n} (u',v') + 4 \le \frac{1}{n}\,\big( d_{S_1} (x',y') + d_{S_1} (u',v'))\big) + 6 $$
$$\le\frac{1}{n}\, \Max \big(d_{S_1}(x',u') + d_{S_1} (y',v') \, ,\, d_{S_1}(x',v') + d_{S_1} (u',y') \big) + 6 \, .$$
Using the first inequality \eqref{ds1infdsn0}, it follows that
$$ d_{S_n}(x,y) + d_{S_n} (u,v)  \le  \Max \big(d_{S_n}(x',u') + d_{S_n} (y',v') \, ,\, d_{S_n}(x',v') + d_{S_n} (u',y') \big) + 6 $$
$$ \le \Max \big(d_{S_n}(x,u) + d_{S_n} (y,v) \, ,\, d_{S_n}(x,v) + d_{S_n} (u,y) \big) + 6 \, .$$
From this last inequality, using Propositions 1.1.6 (p.3) and 1.3.1 (p.7) of \cite{CDP}, we deduce that $(Z, S_n)$ is $12$-hyperbolic for every $n \in \N^*$ 
(we recall that $(\Z , S_1)$ is $0$-hyperbolic).

\subsubsection{Boundedness of the entropy}\label{subsubsec:entropy1}

 Indeed, let $( \mathbf F_i , \mathbf S_i)$ be the free group with $i$ generators, endowed with its canonical symmetric generating set $\mathbf S_i$ (with $\# \mathbf S_i = 2 i$). It is torsion-free, non cyclic 
(when 
$i \in \N \setminus \{ 0 , 1\}$), $0$-hyperbolic (thus $1$-hyperbolic), but one has $\Ent ( \mathbf F_i , \mathbf S_i) = \ln (2 i-1)$. Hence, when $i$ runs in $\N \setminus \{ 0 , 1\}$, we get an infinite family of marked groups, 2 by 2 non isomorphic, which verify all the hypotheses of Theorem \ref{Nbgroupeshyp} except the boundedness of the entropy.

\subsubsection{Boundedness of the hyperbolicity constant}

Gromov-hyperbolicity is obviously a necessary condition. Indeed, for every integer $n \ge 2$, $\Z^n$, endowed with its canonical generating set, is 
torsion-free, non cyclic, with zero entropy, but it is not Gromov-hyperbolic. It is an infinite sequence, even up to isometries, in contradiction with the conclusion 
of Theorem \ref{Nbgroupeshyp}.

Even if the marked groups under consideration verify all the other hypotheses of Theorem \ref{Nbgroupeshyp}, and if they
are only supposed to be Gromov-hyperbolic, without a uniform bound of their hyperbolicity constants, one does not even have a finite number 
of marked groups up to isometry.

We shall construct below infinitely many generating sets of $\Z \ast \Z$ such that the corresponding marked groups 
are two by two non isometric though they all verify the hypotheses of Theorem \ref{Nbgroupeshyp}, except for the fact that the hypothesis 
\lq \lq $\delta$-hyperbolic" is replaced by \lq \lq Gromov-hyperbolic".
Given any pair $p , q$ of integers which are mutually prime, we define $S_0 := \{-1 , 1\}$ and $S_{p,q} := \{p, q, -p, -q\}$. The 
Bachet-B\'ezout theorem proves that $S_{p,q}$ is a generating set for 
$\Z$. A consequence is that a generating set for the free product $\Z \ast \Z$ is given by the disjoint union $S_0 \bigsqcup S_{p,q}$ of the subset 
$ S_0 \simeq S_0 \ast \{0\}$ of the first factor and of the subset $S_{p,q} \simeq \{0\}\ast S_{p,q}$ of the second factor of $\Z \ast \Z$. 
We then have the following properties:

\begin{itemize}
\item The marked groups $(\Z \ast \Z , S_0 \bigsqcup S_{p,q})$ are all Gromov-hyperbolic (with unbounded hyperbolicity constants) and verify all 
the other hypotheses of Theorem \ref{Nbgroupeshyp}.

Indeed, $\Z \ast \Z $ is non cyclic and torsion-free and the entropy of $(\Z \ast \Z , S_0 \bigsqcup S_{p,q})$ is bounded from above by the entropy of the 
free group with $6$ generators, {\sl i.e.} by $\ln (11)$. Moreover, as $\Z \ast \Z$, endowed with the canonical generating set $S_0 \bigsqcup S_{0}$ is the free group 
with $2$ generators, it is $0$-hyperbolic, thus $(\Z \ast \Z , S_0 \bigsqcup S_{p,q})$, being quasi-isometric to $(\Z \ast \Z , S_0 \bigsqcup S_{0})$, is 
Gromov-hyperbolic. 

\item Furthermore, if $S_{p,q} \ne S_{p',q'}$, then the two marked groups $(\Z \ast \Z , S_0 \bigsqcup S_{p,q})$ and $(\Z \ast \Z , S_0 \bigsqcup S_{p',q'})$ are not isometric, and this implies that the cardinality (up to isometries) of the family of marked groups 
$\{(\Z \ast \Z , S_0 \bigsqcup S_{p,q})\}_{(p,q) \in \Z^2}$ is infinite.
The proof of this fact goes by contradiction. Let us suppose that there exists an isometry
$\phi : (\Z \ast \Z , S_0 \bigsqcup S_{p,q}) \f (\Z \ast \Z , S_0 \bigsqcup S_{p',q'})$. We then denote by $\Z_1$ and $\Z_2$ respectively the first and second 
factors $\Z\ast \{0\}$ and $\{0\} \ast \Z$ in the free product $\Z \ast \Z$, and by $G_1$ and $G_2$ the images of $\Z_1$ and $\Z_2$ by $\phi$. We also denote by 
$F$ the image by $\phi$ of $\{0\}\ast S_{p,q}$, it is a generating set for the cyclic group $G_2$, because $\phi$ is an isomorphism. Now $F\subset S_0 \bigsqcup S_{p',q'}$, 
since $\phi$ is an isometry; then $F$ is included in $\{0\}\ast S_{p',q'}$; if not $F$ would intersect
both sets $S_0 \ast \{0\}$ and $\{0\}\ast S_{p',q'}$, hence it would contain a pair $\{a , b\}$ of non trivial elements such that $a \in \Z_1$ and $b \in \Z_2$.
Consequently the cyclic subgroup $G_2$ would contain the free group generated by $\{a , b\}$: a contradiction. This implies that $F = \{0\}\ast S_{p',q'}$.

Since $\phi$ maps $\{0\}\ast S_{p,q}$ onto $\{0\}\ast S_{p',q'}$ and $\Z_2$ onto $\Z_2$, its restriction $\varphi$ to the second factor is an isometry between the marked groups 
$(\Z , S_{p,q})$ and $(\Z , S_{p',q'})$. This implies that $\varphi (1) = \pm 1$, thus that $\varphi = \pm \id$ and that $ S_{p',q'} = \varphi (S_{p,q}) = S_{p,q}$.
\end{itemize}
Along the way, Theorem \ref{Nbgroupeshyp} also proves that the hyperbolicity constant must go to infinity when $p$ and $q$ go to infinity.

\subsection{On the hypotheses of Theorem \ref{boundongroups}}\label{hypnecessary44}

\emph{The necessity of the torsion-free assumption and of the boundedness of the entropy are already clear from the counterexamples given in \ref{subsubsec:torsion1} and in  \ref{subsubsec:entropy1} }

\subsubsection{Boundedness of the diameter}
Let us consider again the example \ref{subsubsec:entropy1}, where 
we replace the metric $d_{\mathbf S_i}$ of the Cayley graph $X_i$ by $d_i = i \cdot d_{\mathbf S_i}$. From previously, we know that $(X_i, d_i)$ is $0$-hyperbolic 
and verifies 
$$\Ent (X_i, d_i) = \frac{1}{i} \Ent (\mathbf F_i, S_i) = \frac{\ln (2 i - 1)}{i} \le 1\,.$$
Hence, we get a family $(\mathbf F_i)_{i \in \N^*}$ of torsion-free groups, whose action on $(X_i, d_i)$ verifies all the hypotheses of 
Theorem \ref{boundongroups} except the boundedness of the diameter. The conclusion of Theorem \ref{boundongroups} is not verified since the $\mathbf F_i ,$'s are infinitely many and  2 by 2 non isomorphic.
 
\subsubsection{Boundedness of the hyperbolicity constant}

Let ${\cal H} (H, D)$ be the set of torsion-free groups which admit a proper isometric action on some Gromov-hyperbolic space $(X,d)$ with entropy 
bounded by $H$ and $\diam (\Gamma \backslash X) \le D$. The groups belonging to ${\cal H} (H, D)$ verify all the hypotheses of Theorem \ref{boundongroups}, except that the spaces on which they act are only supposed to be Gromov-hyperbolic, without any uniform bound on their hyperbolicity constants, the conclusion of Theorem \ref{boundongroups} is then not valid for ${\cal H} (H, D)$, {\sl i.e.} ${\cal H} (H, D)$ contains infinitely many 2 by 2 non isomorphic groups. More precisely, we construct a sequence of 2 by 2 non isomorphic groups belonging to the set ${\cal H} (H, D)$ (but not to ${\cal H} (\delta, H, D)$) as follows. These groups are the fundamental groups of closed hyperbolic $3$-manifolds obtained by a sequence of Dehn surgeries from a given finite volume hyperbolic manifold with one cusp. We will consider their actions on their Cayley graphs associated to an appropriate choice of a generating set.

We consider a non compact $3$-dimensional hyperbolic manifold $(X,g)$ with finite volume and, for the sake of simplicity, only one cusp, 
denoted by $C$. Let $\pi : 
(\mathbf H^3 , can ) \f (X,g)$ be the universal covering of $(X,g)$. We shall first consider the fundamental group $\Gamma$ of $(X,g)$ as the group of 
automorphisms of this covering. 
Provided that $C$ is chosen deep enough in  $X$, we choose one connected component $K$ of $\pi^{-1} (C)$. It is an horoball whose boundary $\partial K$ is an horosphere, hence $K$ is diffeomorphic
to $\partial K \times \R^+$.  Any $\g \in \Gamma$ exchanges the connected components of $\pi^{-1} (C)$ and, denoting by $P$ the parabolic 
subgroup of those elements $\g$ verifying $\g (K) = K$, it is classical that the cusp $C$ is the quotient of $K$ by $P$ and that the \emph{cusp torus} $T = \partial C$ is the quotient of $\partial K$ by $P$.

Let us now fix a basis $\{ \alpha , \beta \}$ of $H_1(P\backslash\partial K ,  \Z) = H_1(T, \Z) \simeq \Z^2$, we then have $P \simeq \Z^2$.
Given any pair of mutually prime integers $(a, b)$, one defines the \emph{Dehn filling with slope $(a,b)$} as the closed manifold $X_{a,b}$ constructed as follows: 
we glue a solid torus $\mathbf T_0 = D^2 \times \mathbf S^1$ to $X \setminus C$, identifying their boundaries via a diffeomorphism $\phi : \partial\mathbf T_0 = \partial D^2 
\times \mathbf S^1 \f T$ which maps $\partial D^2$ onto a simple\footnote{On the torus $\mathbf T^2$, endowed with the flat metric such that
$\{\alpha , \beta\}$ is orthonormal, the proof of the existence of a simple geodesic representative of the homology class $a \alpha +b \beta$ is immediate 
when $a,b$ are mutually prime.} 
closed curve $c(a,b)$ which represents the homology class $a\alpha +b\beta$ and the factor $ \mathbf S^1$ onto a representative $c(a',b')$ of any homology class 
$a' \alpha +b' \beta$ such that $a b' - b a' = \pm 1$. It is now classical that the differentiable structure of $X_{a,b}$ does not depend on the choice of 
$(a',b')$ (see for example \cite{Mart}, Lemma 10.1.2), nor on the choice of the representative $c(a,b)$ of the homology class $a\alpha +b\beta$ 
(see \cite{Mart}, Lemma 10.1.2 and Proposition 6.3.16). 
By the Van Kampen theorem, the fundamental group of $X_{a,b}$ (denoted by $\Gamma_{a,b}$) is isomorphic to $\Gamma/ \ll c(a,b) \gg=\Gamma_{a,b}$, where 
$\ll c(a,b) \gg$ is the smallest normal subgroup of $\Gamma$ which contains the homotopy class of the loop $c(a,b)$ (see for instance 
\cite{Mart}, Proposition 10.1.3.). Let us now fix a generating set $S$ of $\Gamma$, define $N'_1 := \# (S)$. Denoting by $p_{a,b}$ the quotient map 
$\Gamma \f \Gamma_{a,b}$, then $S_{a,b} := p_{a,b} (S)$ is a generating set for $\Gamma_{a,b}$ which verifies 
$\# (S_{a,b}) \le N'_1$.

We shall now consider the proper, isometric, canonical action of the group $\Gamma_{a,b}$ on its Cayley graph $G_{a,b}$ associated to the generating set $S_{a,b}$ and endowed with the word distance $d_{a,b} := d_{S_{a,b}}$. It is trivial that $\diam \big(\Gamma_{a,b} \backslash G_{a,b}\big) \le 1 $.
On the one hand, the entropy of $(G_{a,b} , d_{a,b})$ is bounded from above by the 
entropy of the free group generated by $S_{a,b}$, hence $\Ent (G_{a,b} , d_{a,b}) \le \ln (2 n -1)$, where $n= \#S_{a,b}$.

On the other hand,  it is known, since the works of W. Thurston (\cite{Thurston}), that there exists a finite subset 
$E$ of $\Z^2$ (the set of \emph{exceptional slopes}) such that $X_{a,b}$ admits an hyperbolic Riemannian metric, denoted by $g_{a,b}$, for every coprime couple $(a,b) \notin E$. Let us denote by $I$ the infinite set of all coprime $(a,b) \in \Z^2 \setminus E$, we shall suppose in the sequel that $(a,b) \in I$. 
As $\Gamma_{a,b}$ is the fundamental group of the hyperbolic manifold $(X_{a,b} , g_{a,b})$, acting co-compactly on its universal cover, the group 
$\Gamma_{a,b}$ is torsion-free and Gromov-hyperbolic, hence its Cayley graph $(G_{a,b} , d_{a,b})$ is Gromov-hyperbolic too.
This proves that $\Gamma_{a,b} \in {\cal H} (H, D)$.\\ 
Now, a result of T. Jorgensen, revisited by W. Thurston (\cite{Thurston}) (for a complete explanation, see \cite{NZ}, particularly equation (1)) implies that
$$\forall (a,b) \in I, \quad \Vol ( (X_{a, b} , g_{a, b})) < \Vol (X , g) \quad \text{\rm and} \quad \lim_{(a,b) \to \infty} \Vol (X_{a, b} , g_{a, b}) = \Vol (X, g) \,. $$
Hence, there exists a sequence $(a_i,  b_i)_{i \in \N}$ of elements of $ I$, going to infinity, such that 
$$\forall i \in \N, \quad  \Vol (X_{a_i, b_i},  g_{a_i, b_i})  < \Vol (X_{a_{i+1}, b_{i+1}},  g_{a_{i+1}, b_{i+1}})\,. $$
It follows that, for $i\ne j$, 
$(X_{a_i, b_i}, g_{a_i, b_i})$ and $(X_{a_j, b_j}, g_{a_j, b_j}) $ are not isometric thus, by Mostow's rigidity Theorem, that their fundamental
groups $\Gamma_{a_i, b_i}$ and $\Gamma_{a_j, b_j}$ are not isomorphic. This proves that $(\Gamma_{a_i, b_i})_{i \in \N}$ is a sequence of two by two non isomorphic elements of ${\cal H} (H, D)$.

\section{Appendices}\label{appendices}

\subsection{Geodesic and length spaces}\label{geodesics}

A result by M. Gromov (\cite{Gr1} Proposition 3.22, whose proof, written for Riemannian manifolds, is still valid on path-connected metric spaces
is the following

\begin{prop}\label{generatorsdiameter}
For every group $\Gamma$ which admits a proper isometric action on a path-connected metric space $(X,d)$ such that $\diam (\Gamma \backslash X) \le D$, and for every 
$x \in X$, the subset $\Sigma_{2 D} (x) := \{ \sigma \in \Gamma^*  : d(x , \sigma x) \le 2 D\}$ is a symmetric generating set of $\Gamma $.
\end{prop}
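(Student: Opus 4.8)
The symmetry of $\Sigma_{2D}(x)$ is immediate: if $\sigma\in\Sigma_{2D}(x)$ then, since $\sigma$ is an isometry, $d(x,\sigma^{-1}x)=d(\sigma x,x)=d(x,\sigma x)\le 2D$, so $\sigma^{-1}\in\Sigma_{2D}(x)$. The real content is that $\Sigma_{2D}(x)$ generates $\Gamma$. The plan is to run a connectedness argument on $X$: I will write $X$ as the union of two closed sets, one built from the orbit points indexed by the subgroup $\Gamma':=\langle\Sigma_{2D}(x)\rangle$ and the other from the remaining orbit points, show that these two closed sets are disjoint, and invoke the connectedness of $X$ (which follows from path-connectedness) to conclude that the second set is empty, i.e. $\Gamma'=\Gamma$.

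More precisely, I would set $A:=\bigcup_{\gamma\in\Gamma'}\overline B_X(\gamma x,D)$ and $B:=\bigcup_{\gamma\in\Gamma\setminus\Gamma'}\overline B_X(\gamma x,D)$. Three properties have to be checked. First, $A\cup B=X$: for any $y\in X$ the hypothesis $\diam (\Gamma\backslash X)\le D$ gives $\inf_{\gamma}d(y,\gamma x)\le D$, and this infimum is attained (see below), so $y$ lies in some closed $D$-ball centred at an orbit point. Second, $A\cap B=\emptyset$: if a point $y$ lay in both, there would exist $\gamma\in\Gamma'$ and $\gamma'\in\Gamma\setminus\Gamma'$ with $d(y,\gamma x)\le D$ and $d(y,\gamma' x)\le D$, whence $d(x,\gamma^{-1}\gamma' x)=d(\gamma x,\gamma' x)\le 2D$; thus $\gamma^{-1}\gamma'\in\Sigma_{2D}(x)\subset\Gamma'$ and therefore $\gamma'\in\Gamma'$, a contradiction. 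This is exactly where the threshold $2D$ is used. Third, both $A$ and $B$ are closed. Once these are established, since $X$ is connected and $x\in A$ (take $\gamma=e$), connectedness forces $B=\emptyset$; as $\gamma' x\in B$ for every $\gamma'\in\Gamma\setminus\Gamma'$, this means $\Gamma\setminus\Gamma'=\emptyset$, i.e. $\Gamma'=\Gamma$, as desired.

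The main technical point, and the step I expect to require the most care, is the closedness of $A$ and $B$, together with the fact that the infimum $\inf_\gamma d(y,\gamma x)$ is actually a minimum; both rest on the properness of the action. Indeed, properness means $\{\gamma:d(x,\gamma x)\le R\}$ is finite for every $R$, so the orbit $\Gamma x$ is locally finite: fixing $z\in X$, a ball $\overline B_X(\gamma x,D)$ can meet $\overline B_X(z,1)$ only if $d(x,\gamma x)\le d(x,z)+D+1$, and only finitely many $\gamma$ satisfy this. Hence the families defining $A$ and $B$ are locally finite families of compact sets (the space being proper), so their unions are closed; the same local finiteness shows that $\inf_\gamma d(y,\gamma x)$ is attained. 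I note that one could alternatively appeal to Lemma \ref{Serre}, but that statement is tailored to open path-connected sets $U$ and is most naturally applied in length spaces where balls are path-connected, whereas here only path-connectedness of $X$ is assumed; the closed-set connectedness argument above avoids that issue.
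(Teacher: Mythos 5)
Your proof is correct, but it follows a genuinely different route from the paper's. The paper argues along a path: by path-connectedness (and uniform continuity on $[0,1]$) it picks a finite chain $x=y_0,y_1,\dots,y_N=\gamma x$ with $d(y_{i-1},y_i)<\varepsilon$, chooses $\gamma_i\in\Gamma$ with $d(y_i,\gamma_i x)\le D$, and writes $\gamma$ as the product of the increments $\gamma_{i-1}^{-1}\gamma_i$, each lying in $\Sigma_{2D+\varepsilon}(x)$; properness is then used to shrink $\varepsilon$ so that $\Sigma_{2D+\varepsilon}(x)=\Sigma_{2D}(x)$. You instead partition $X$ into the two disjoint closed sets $A$ and $B$ built from the $\Gamma'$-orbit and its complement and invoke connectedness. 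Your verifications are sound: the covering uses that $\inf_\gamma d(y,\gamma x)\le D$ is attained (properness), the disjointness is exactly the $2D$ threshold, and closedness follows from local finiteness of the family of closed balls (a locally finite union of closed sets is closed, so you do not even need properness of the space there, only of the action). What each approach buys: yours needs only connectedness of $X$ rather than path-connectedness and avoids the $\varepsilon$-bookkeeping entirely; the paper's chain argument is constructive and, run along a path of controlled length, yields the quantitative word-length estimate that is exploited later in Lemma \ref{Gromovgeneral} (the two-sided comparison between $d_\Sigma$ and $d$), which a pure connectedness argument does not provide. Both proofs use properness in an essential way, just at different places.
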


\begin{proof}
By the path-connectedness, for every $\g \in \Gamma$ and every $\e > 0$, there exists a finite set $ \{y_0, y_1 , \ldots , y_N\} \subset X$ verifying 
$y_0 = x$, $ y_N = \g x$ and $d(y_{i-1}, y_i) < \e $ for every $i \in \{1 , \ldots , N \}$. Let us choose $\g_0, \g_1 , \ldots , \g_N \in \Gamma$ such that $ \g_0 = e$, $\g_N = \g $ and 
$d (y_i , \g_i x) \le D$, we then get $\g = \sigma_1 \cdot \ldots  \cdot \sigma_N$, where 
$\sigma_i = \g_{i-1}^{-1}\cdot \g_i \in \Sigma_{2 D+ \e} (x)$ and the finiteness of $\Sigma_{3 D} (x)$ 
proves that $\Sigma_{2 D+ \e}   (x) = \Sigma_{2 D} (x)$ when $\e$ is sufficiently small. This proves that $\Sigma_{2 D} (x) = \Sigma_{2 D + \e} (x)$ 
is a symmetric generating set of $\Gamma $, the properness of the action implying the finiteness of $\Sigma_{2 D} (x) $. 
\end{proof}

The following definitions are classical (see for example \cite{BH} D\'efinitions I.1.3 p. 4):
\begin{defis}\label{geodesicspace}
In any metric space $ (X,d)$
\begin{itemize}
\item  a \emph{(normal) geodesic} is a map $c$ from some interval $I \subset \R$ to $X$ such that, for every $ t, \, t' \in I$ 
$ d(c(t) , c(t') ) = |t - t'|$,

\item when $I$ is a closed interval (resp. $ ]-\infty , + \infty [$) the geodesic is called a \emph{geodesic segment}
(resp. a \emph{geodesic line}), the image of a geodesic segment $ c $ with origin $x$ and endpoint $y$ is often denoted by $[x, y]$ 
(though this does not suppose that this geodesic segment is unique),

\item a \emph{(normal) local geodesic}  is a map $c$ from some interval $I \subset \R$ to $X$ such that, for every $ t \in I$, 
there exists $\e > 0$ such that $ d(c(t') , c(t'') ) = |t' - t''|$ for every $ t' , \, t'' \in \, ]t- \e , t+ \e[\,\cap I$,

\item a metric space $(X,d)$ is \emph{geodesic} if any two points can be joined by at least one geodesic.
\end{itemize}
\end{defis}

\begin{defi}\label{naturalparameter}
Given a geodesic segment $[x_0 , x_1]$, the \emph{natural parametrization} of this segment is the map $t \mapsto x_t$ from $[0 , 1]$ to $[x_0 , x_1]$, 
defined by $d(x_0 , x_t) = t \, d(x_0 , x_1)$.
\end{defi}

\subsection{About Gromov-hyperbolic spaces}\label{GromovHyp}

\subsubsection{Definitions}
Given three nonnegative numbers $\alpha , \beta, \gamma$, we define the tripod $T := T(\alpha , \beta, \gamma)$ as the metric simplicial tree 
with $3$ vertices
$x', \,y' ,\, z'$ of valence $1$ (the \emph{endpoints}), one vertex $c$ of valence $3$ (the \emph{branching point}), and $3$ edges  $[cx'] , \, [cy'] , \, [cz']$ 
of respective lengths $\alpha , \beta, \gamma$ (the \emph{branches}). We denote by $d_T (u,v) $ the distance on this tree between two points $u,v \in 
T$, {\sl  i.e.}  the minimal length of a path contained in $T$ and joining $u$ to $v$.

\smallskip
For the sake of simplicity, we only consider geodesic metric spaces (see Definition in section \ref{geodesics}). In such a space a geodesic triangle 
$\Delta = [x , y , z]$ is the union of three geodesics $[x , y ]$, $[y , z ]$ and $[z , x ]$. Given three points $x,y,z$ in a geodesic metric space, there
exists at least one geodesic triangle $\Delta = [x , y , z]$ whose sides have respective lengths $d(x,y),\, d(y,z)$ and $d(x,z)$. 

\begin{lemma}\label{prodist} 
To any geodesic triangle $\Delta$ corresponds a metric tripod $(T_\Delta , d_T)$ and a surjective map $f_\Delta : \Delta \f  T_\Delta$ (called the 
\emph{approximation of $\Delta$ by a tripod}) such that, in restriction to each side of $\Delta$, $f_\Delta$ is an isometry,
\end{lemma}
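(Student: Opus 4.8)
The plan is to build the tripod directly from the three side-lengths of $\Delta = [x,y,z]$ and then to define $f_\Delta$ one side at a time. First I would set the length of the branch ending at $x'$ to be
$$\alpha := \tfrac{1}{2}\big(d(x,y) + d(x,z) - d(y,z)\big),$$
and define $\beta$ (the branch ending at $y'$) and $\gamma$ (the branch ending at $z'$) by cyclically permuting $x,y,z$; all three are nonnegative by the triangle inequality, so $T_\Delta := T(\alpha,\beta,\gamma)$ is a well-defined tripod with branching point $c$. The key algebraic observation is that these lengths satisfy $\alpha+\beta = d(x,y)$, $\beta+\gamma = d(y,z)$, $\gamma+\alpha = d(z,x)$; equivalently $d_T(x',y') = d(x,y)$ and likewise for the two other pairs of endpoints. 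This is exactly what is needed for the restriction of $f_\Delta$ to each side to be an isometry onto the corresponding geodesic of $T_\Delta$.

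Next I would define $f_\Delta$ side by side, using the natural parametrization of Definition \ref{naturalparameter}. On $[x,y]$, parametrized by arc length $s\in[0,d(x,y)]$ with $s=0$ at $x$, I send the point at parameter $s$ to the point of $[cx']$ at distance $s$ from $x'$ when $s\le\alpha$, and to the point of $[cy']$ at distance $s-\alpha$ from $c$ when $s\ge\alpha$ (the two prescriptions agree at $s=\alpha$, both giving $c$). I define $f_\Delta$ on $[y,z]$ and on $[z,x]$ by the same recipe, cyclically. Each such restriction sends an initial arc isometrically onto one branch and the complementary arc isometrically onto a second branch, hence is an isometry of the side onto the geodesic of $T_\Delta$ joining the two corresponding endpoints; here the identities $\alpha+\beta = d(x,y)$, etc., are exactly what makes the total lengths match.

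The one point requiring care is the consistency of these three partial definitions at the common vertices, since each vertex lies on two sides. At $x$, for instance, the side $[x,y]$ places the preimage of the branching point at distance $\alpha$ from $x$, and the side $[z,x]$ also places it at distance $\alpha$ from $x$ (its portion adjacent to $x$ has length $\gamma+\alpha-\gamma=\alpha$); consequently both sides send $x$ itself to the endpoint $x'$, so the definitions agree there, and the analogous symmetry holds at $y$ and $z$. Thus $f_\Delta$ is well defined on all of $\Delta$. Finally, surjectivity is immediate: the images of the three sides are the geodesics $[x',y']$, $[y',z']$, $[z',x']$ of $T_\Delta$, whose union is all of $T_\Delta$ since every branch is contained in two of them. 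I expect no genuine obstacle beyond this vertex bookkeeping; the entire content of the lemma is the nonnegativity of $\alpha,\beta,\gamma$ and the elementary linear system they satisfy.
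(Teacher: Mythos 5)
Your proposal is correct and follows essentially the same route as the paper: the paper also constructs $T_\Delta = T(\alpha,\beta,\gamma)$ as the unique nonnegative solution of $\alpha+\beta=d(x,y)$, $\alpha+\gamma=d(x,z)$, $\beta+\gamma=d(y,z)$ (nonnegativity from the triangle inequality) and then notes that this choice yields the map $f_\Delta$. You merely spell out the explicit side-by-side definition of $f_\Delta$, the agreement at the vertices, and surjectivity, which the paper leaves implicit.
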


Indeed, if $\Delta = [x, y, z]$, $T_\Delta$ is constructed as the tripod $ T(\alpha , \beta, \gamma)$, where (by the triangle inequality) $(\alpha , \beta, \gamma)$ is the 
unique element of $[0 , +\infty[^3$ such that $d(x, y) = \alpha + \beta$, $d(x, z) = \alpha + \gamma$ and $d(y, z) = \beta + \gamma$. This choice of 
$(\alpha , \beta, \gamma)$ implies the existence of the map $f_\Delta : \Delta \f  T_\Delta$ as asserted in Lemma \ref{prodist}.

\begin{defis}\label{hypdefinition0}
A geodesic triangle $\Delta$ of $(X,d)$ is said to be \emph{$\delta$-thin} if, for every $u \in T$ and every $v,  w \in f_\Delta^{-1} (\{u\})$, one has $d(v,w) \le \delta$.

In this text, a metric space is said to be \emph{$\delta$-hyperbolic} if it is geodesic, proper, and if all its geodesic triangles are $\delta$-thin.
\end{defis}

The following results are well known (and often taken as definitions of $\delta$-hyperbolicity); their proof may be found (with variable constants, depending 
on the choice of the definition) in any classical source (see for example \cite{GH}, \cite{CDP}, \cite{BH}).

\begin{lemma}\label{proprietes} For every $\delta$-hyperbolic space $(X,d)$, one has:

\begin{itemize}

\item[(i)]  \emph{(Tripod Approximation)} For every geodesic triangle $\Delta $, its approximation $f_\Delta : \Delta \to (T_\Delta, d_T)$ by a tripod verifies
 $$d(u,v) - \delta  \le d_T \big(f_\Delta (u) , f_\Delta (v) \big) \le d(u,v).$$

\item[(ii)] \emph{(Rips Lemma)} Every geodesic triangle $\Delta = [x , y , z]$ is \emph{$\delta$-slim}, {\sl i.e.} for every point $ u \in [y,z]$, one has: 
$d(u,[x,y ]\cup [x,z ]) \le \delta$. Reciprocally, every $\delta$-slim triangle is $4 \delta$-thin \emph{(see \cite{GH}, Proposition 2.21 and proof at 
page 43)}.

\item[(iii)] (\emph{Quadrangle Lemma})  For every four points $  x, y , z, w \in X$, one has
$$d(x,z) + d(y,w) \le \Max \big( d(x,y) + d(z,w) \, ;\, d(x,w) + d(y,z) \big) + 2\,\delta \, .$$
\end{itemize}
\end{lemma}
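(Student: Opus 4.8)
The plan is to derive all three parts from the single standing hypothesis that every geodesic triangle is $\delta$-thin, using nothing beyond the triangle inequality in $(X,d)$ and the fact that $f_\Delta$ restricts to an isometry on each side of $\Delta$.

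For (i), the upper bound $d_T(f_\Delta(u),f_\Delta(v)) \le d(u,v)$ is a pure triangle-inequality computation. If $u,v$ lie on the same side it is an equality; if, say, $u\in[x,y]$ and $v\in[x,z]$, then according to whether $f_\Delta(u),f_\Delta(v)$ sit on the common branch $[cx']$ or on the two opposite branches, the tree-distance $d_T(f_\Delta(u),f_\Delta(v))$ equals one of $|d(x,u)-d(x,v)|$, $d(x,v)-d(x,u)$, or $(d(x,u)-\alpha)+(d(x,v)-\alpha)$, and each is bounded by $d(u,v)$; the last case follows from $d(y,z)\le d(y,u)+d(u,v)+d(v,z)$ together with $d(x,u)-\alpha=\beta-d(u,y)$. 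For the lower bound $d(u,v)\le d_T(f_\Delta(u),f_\Delta(v))+\delta$ I would route through a point on the side containing $v$: if $f_\Delta(u)$ already lies on a branch met by that side, pick $w$ on it with $f_\Delta(w)=f_\Delta(u)$, so thinness gives $d(u,w)\le\delta$ while $d(w,v)=d_T(f_\Delta(u),f_\Delta(v))$ by the isometry property; otherwise I pass through the two preimages $u_0,w_0$ of the branch point $c$, for which $d(u_0,w_0)\le\delta$ and $d_T(f_\Delta(u),c)+d_T(c,f_\Delta(v))=d_T(f_\Delta(u),f_\Delta(v))$, since $f_\Delta(u)$ and $f_\Delta(v)$ then lie on distinct branches.

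For (ii), the implication ``$\delta$-thin $\Rightarrow$ $\delta$-slim'' is then immediate: for $u\in[y,z]$ the point $f_\Delta(u)$ lies on a branch $[cy']$ or $[cz']$ (or at $c$), each of which is also in the image of $[x,y]$ or of $[x,z]$, so choosing $w$ on the corresponding side with $f_\Delta(w)=f_\Delta(u)$ yields $d(u,[x,y]\cup[x,z])\le d(u,w)\le\delta$. The reciprocal assertion, that a $\delta$-slim triangle is automatically $4\delta$-thin, is the genuinely delicate point and the main obstacle of the whole statement; I would not reprove it but cite \cite{GH}, Proposition 2.21, exactly as indicated.

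For (iii) I would first isolate a fellow-traveller estimate: if $p,q$ denote the points at equal distance $t\le\frac12\big(d(x,y)+d(x,z)-d(y,z)\big)$ from $x$ along $[x,y]$ and $[x,z]$, then $f_\Delta(p)=f_\Delta(q)$, both lying on the branch $[cx']$ at distance $t$ from $x'$, whence $d(p,q)\le\delta$ by thinness. Writing $(p|q)_o:=\frac12\big(d(o,p)+d(o,q)-d(p,q)\big)$ and setting $m:=\Min\big((y|z)_x,(z|w)_x\big)$, I take the points $p,q,r$ at distance $m$ from $x$ on $[x,y],[x,z],[x,w]$, so that $d(p,q)\le\delta$ and $d(q,r)\le\delta$, hence $d(p,r)\le 2\delta$; inserting $d(y,w)\le d(y,p)+d(p,r)+d(r,w)$ into the definition of $(y|w)_x$ gives $(y|w)_x\ge m-\delta$. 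This is precisely the four-point inequality $d(x,z)+d(y,w)\le\Max\big(d(x,y)+d(z,w),\,d(x,w)+d(y,z)\big)+2\delta$ once the Gromov products are cleared. Apart from the cited converse in (ii), everything is bookkeeping in thinness and the triangle inequality, the only real care being the tracking of the constant $2\delta$ in the final step of (iii).
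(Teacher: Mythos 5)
Your proof is correct, but it is worth noting that the paper does not actually prove Lemma \ref{proprietes} at all: it declares the three statements ``well known'' and defers entirely to the classical sources \cite{GH}, \cite{CDP}, \cite{BH}, with the caveat that the constants there vary with the chosen definition of hyperbolicity. What you supply instead is a complete, self-contained derivation from the paper's own Definition \ref{hypdefinition0} ($\delta$-thinness via the tripod map $f_\Delta$), and the details check out: the case analysis for the upper bound in (i) according to the position of $f_\Delta(u),f_\Delta(v)$ relative to the branch point is exhaustive, the lower bound correctly routes either through a second preimage of $f_\Delta(u)$ on the side of $v$ or through the two preimages of $c$, the forward implication of (ii) is the expected one-liner, and in (iii) the fellow-traveller estimate $d(p,q)\le\delta$ applied to the two triangles $[x,y,z]$ and $[x,z,w]$ (sharing the same geodesic $[x,z]$, which the geodesicity of $X$ permits) yields $(y|w)_x\ge\Min\big((y|z)_x,(z|w)_x\big)-\delta$, which unwinds exactly to the quadrangle inequality with the constant $2\delta$ claimed in the statement. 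You rightly do not attempt the one genuinely delicate point, the converse ``$\delta$-slim $\Rightarrow$ $4\delta$-thin,'' and cite \cite{GH} for it exactly as the paper does. The trade-off is clear: the paper's citation is economical but leaves the reader to reconcile the constants across references with differing definitions, whereas your argument pins down the constants $\delta$ (in (i) and the forward part of (ii)) and $2\delta$ (in (iii)) precisely for the definition actually used in the paper, which is the form in which the lemma is invoked later (e.g.\ in Lemma \ref{ecartement} and in Section \ref{sec:exa}).
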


The proof of the properties (i) and (ii) of the following Lemma is given in \cite{GH}, Proposition 2.5 p. 45.

\begin{lemma}\label{convexity} \emph{(Quasi-Convexity)}
In every $\delta$-hyperbolic space $(X,d)$, let $[x_0 , x_1]$ and $[y_0 , y_1]$ be two geodesic segments endowed with their natural 
parametrizations $t \mapsto x_t$ and $t \mapsto y_t$ (see Definition \ref{naturalparameter}) , then
\begin{itemize}
\item[(i)] if $x_0 = y_0$, then $d(x_t, y_t) \le t \, d(x_1 , y_1) + \delta$;
\item[(ii)] in the general case $d(x_t, y_t) \le (1-t) d(x_0 , y_0) + t \, d(x_1 , y_1) + 2 \,\delta$.
\end{itemize}
\end{lemma}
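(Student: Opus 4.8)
The plan is to reduce both statements to the tree case via the tripod approximation of Lemma~\ref{proprietes}(i), where the corresponding convexity holds \emph{exactly} (with constant $0$), and then to bootstrap from (i) to (ii) by a single triangle-inequality argument. The point to keep in mind is that the natural parametrization transports well under the side-isometries $f_\Delta$: if $f_\Delta$ is an isometry in restriction to each side of a geodesic triangle, it maps the natural parametrization of a side to the natural parametrization of the corresponding geodesic in the tripod, so the parameter $t$ on $\Delta$ matches the parameter $t$ on $T_\Delta$.

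For (i), since here $x_0 = y_0$, I would consider the geodesic triangle $\Delta := [x_0, x_1, y_1]$ together with its approximation $f_\Delta : \Delta \f T_\Delta$ by the tripod $T(\alpha,\beta,\gamma)$. By the remark above, $f_\Delta(x_t)$ and $f_\Delta(y_t)$ are the points at parameter $t$ of the two tree-geodesics issued from the common vertex $f_\Delta(x_0) = f_\Delta(y_0)$. First I would establish the exact tree inequality $d_{T}\big(f_\Delta(x_t), f_\Delta(y_t)\big) \le t\, d_{T}\big(f_\Delta(x_1), f_\Delta(y_1)\big)$ by an elementary computation on the tripod, splitting into cases according to whether each image point lies on the central branch toward $x_0$ or has already passed the branching point onto the branch toward $x_1$, respectively toward $y_1$; in each case the estimate reduces to $t\,|\beta-\gamma| \le t(\beta+\gamma)$ or to $2\alpha(t-1)\le 0$, both of which hold since $0 \le t \le 1$. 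Combining this with $d_{T}\big(f_\Delta(x_1), f_\Delta(y_1)\big) \le d(x_1, y_1)$ and with the left inequality of Lemma~\ref{proprietes}(i), namely $d(u,v) \le d_T\big(f_\Delta(u), f_\Delta(v)\big) + \delta$ applied to $u = x_t$, $v = y_t$, yields $d(x_t, y_t) \le t\, d(x_1, y_1) + \delta$, which is (i).

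For (ii), I would insert the auxiliary geodesic $[x_0, y_1]$ with its natural parametrization $t \mapsto z_t$. The segments $[x_0, x_1]$ and $[x_0, y_1]$ share the origin $x_0$, so (i) gives $d(x_t, z_t) \le t\, d(x_1, y_1) + \delta$. The segments $[x_0, y_1]$ and $[y_0, y_1]$ instead share the endpoint $y_1$; reversing their parametrizations so that $y_1$ becomes the common origin and applying (i) with parameter $1-t$ (noting that the point at parameter $1-t$ from $y_1$ on $[y_1,x_0]$ is exactly $z_t$, and likewise for $y_t$) gives $d(z_t, y_t) \le (1-t)\, d(x_0, y_0) + \delta$. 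The triangle inequality then produces $d(x_t, y_t) \le d(x_t, z_t) + d(z_t, y_t) \le (1-t)\, d(x_0, y_0) + t\, d(x_1, y_1) + 2\delta$, which is (ii). The only genuinely technical step is the tripod case analysis in the first paragraph; I expect the main care to be needed there and in the reversal-of-parametrization bookkeeping in (ii), where the parameters on $[x_0,y_1]$ seen from its two ends must be kept complementary.
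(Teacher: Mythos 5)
Your proof is correct and follows the standard route: the tripod approximation of Lemma~\ref{proprietes}(i) reduces part (i) to an exact convexity computation on the tripod (your three-case analysis checks out, each case giving $t|\beta-\gamma|$ or $t(\beta+\gamma)+2\alpha(t-1)$, both $\le t(\beta+\gamma)$), and part (ii) follows by inserting the geodesic $[x_0,y_1]$ and applying (i) twice after reversing parametrizations, with the two $\delta$'s summing to the stated $2\delta$. The paper gives no proof of its own but defers to \cite{GH}, Proposition 2.5, whose argument is essentially the one you wrote, so there is nothing to flag.
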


\subsubsection{Projections and almost equality in the triangle inequality}\label{sectionprojections}

The proofs of the following results can be found (for example) in \cite{BCGS}, section 8.2.

\begin{defi}\label{projete}
For every closed subset $F$ of a metric space $(X,d)$ and every point $x \in X$, a \emph{projection of $x$ on $F$} is a (any) point $\bar x \in F$ such that
$ d(x,\bar x) = d(x,F) := \inf_{z \in F} d(x,z)$.
\end{defi}
When it exists, a projection of $x$ on $F$ is generally not unique.
By continuity of the distance there always exists a projection of $x$ on $F$ when $F$ is compact, and when $F$ is closed if the metric space is proper. 
However, the assumption \emph{proper space} is not necessary when $F$ is the image of a geodesic, as proved by the following
\begin{lemma} \emph{(Projection on Geodesic)} \label{projectiongeod}
If $c$ is a geodesic line (or ray, or segment) in a metric space $(X,d)$, every point $x \in X$ admits a projection on the image ${\rm Im}(c)$ of $c$
and the map $ x \mapsto d(x , {\rm Im}(c) )$ is Lipschitz with Lipschitz constant $1$.
\end{lemma}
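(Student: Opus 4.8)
The plan is to treat the two assertions separately; the Lipschitz estimate is completely general, whereas the existence of a projection is where the geodesic hypothesis does the work.

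First I would establish that for \emph{any} nonempty subset $F \subset X$ the map $x \mapsto d(x,F)$ is $1$-Lipschitz. Indeed, for $x, y \in X$ and every $z \in F$ the triangle inequality gives $d(x,F) \le d(x,z) \le d(x,y) + d(y,z)$; taking the infimum over $z \in F$ yields $d(x,F) \le d(x,y) + d(y,F)$, and exchanging the roles of $x$ and $y$ gives $|d(x,F) - d(y,F)| \le d(x,y)$. Applied to $F = {\rm Im}(c)$ this is exactly the Lipschitz statement, and it uses neither the geodesic property nor properness of $X$.

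For the existence of a projection, let $c : I \f X$ be the given geodesic, where $I \subset \R$ is a closed interval (this is the case for a segment, a ray, or a line). Fixing $x \in X$, I would study the function $\phi(t) := d(x, c(t))$ on $I$. Since $d(c(t), c(t')) = |t - t'|$, the triangle inequality shows at once that $\phi$ is $1$-Lipschitz, hence continuous. The crucial point is that $\phi$ is also \emph{coercive}: fixing any $t_0 \in I$, the reverse triangle inequality gives $\phi(t) = d(x, c(t)) \ge d(c(t_0), c(t)) - d(x, c(t_0)) = |t - t_0| - \phi(t_0)$, so $\phi(t) \to +\infty$ as $|t| \to \infty$ along $I$.

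It then remains to combine these two facts. Set $m := \inf_{t \in I} \phi(t) \le \phi(t_0)$ and $J := I \cap [\,t_0 - 2\phi(t_0)\,,\, t_0 + 2\phi(t_0)\,]$. For $t \in I \setminus J$ the coercivity bound gives $\phi(t) \ge |t - t_0| - \phi(t_0) > \phi(t_0) \ge m$, so the infimum of $\phi$ over $I$ coincides with its infimum over $J$. But $J$ is a closed bounded interval, hence compact, and $\phi|_J$ is continuous, so this infimum is attained at some $t^* \in J$; then $\bar x := c(t^*)$ satisfies $d(x, \bar x) = d(x, {\rm Im}(c))$, which is the desired projection. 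The only delicate point — and the reason the statement deserves to be isolated — is that the properness of $X$ is bypassed: the coercivity of $\phi$, forced by the fact that $c$ is a genuine geodesic and not merely a continuous curve, confines the minimization to a bounded parameter interval which is automatically compact, so no compactness assumption on $X$ itself is needed.
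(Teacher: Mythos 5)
Your proof is correct: the $1$-Lipschitz estimate for $x \mapsto d(x,F)$ is the standard triangle-inequality argument valid for any nonempty $F$, and the existence of a projection follows from the continuity and coercivity of $t \mapsto d(x,c(t))$, which confines the minimization to a compact parameter interval without any properness assumption on $X$. The paper itself does not print a proof (it refers to \cite{BCGS}, Section 8.2), but your argument is exactly the intended one --- in particular it justifies the paper's preceding remark that properness of the space is unnecessary when $F$ is the image of a geodesic.
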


\begin{lemma}\label{projection} \emph{(almost equality in the triangle inequality)}
In a $\delta$-hyperbolic space $(X,d)$, for a geodesic $c$ of $(X,d)$ and a point $y$ on it and for every point $x \in X$, any of its projections
$\bar x$ on the geodesic $c$ verifies $ d(x,y) \ge d(x,\bar x) + d(\bar x , y) - 2\, \delta$.
\end{lemma}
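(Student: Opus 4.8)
The plan is to compare $d(x,y)$ with $d(x,\bar x)+d(\bar x,y)$ by means of the tripod approximation of the geodesic triangle $\Delta=[x,\bar x,y]$, where I choose the side $[\bar x,y]$ to be the subsegment of the geodesic $c$ joining $\bar x$ to $y$. This choice is crucial: it guarantees that the whole side $[\bar x,y]$ is contained in ${\rm Im}(c)$, so that auxiliary points taken on it will be legitimate competitors for the projection $\bar x$.

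First I would introduce the tripod $T_\Delta=T(\alpha,\beta,\gamma)$ associated to $\Delta$ as in Lemma~\ref{prodist}, with $\alpha,\beta,\gamma$ the branch lengths attached to the vertices $x,\bar x,y$ respectively, so that $d(x,\bar x)=\alpha+\beta$, $d(x,y)=\alpha+\gamma$ and $d(\bar x,y)=\beta+\gamma$. Substituting these three equalities into the inequality to be proved, one sees after cancellation that $d(x,y)\ge d(x,\bar x)+d(\bar x,y)-2\delta$ is \emph{equivalent} to the single bound $\beta\le\delta$ on the branch length attached to the vertex $\bar x$. Thus the whole lemma reduces to estimating this median branch length.

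The heart of the argument is then to prove $\beta\le\delta$ by playing the $\delta$-thin property (Definition~\ref{hypdefinition0}) against the minimality defining the projection $\bar x$ (Definition~\ref{projete}). I would let $p$ be the point of the side $[x,\bar x]$ at distance $\beta$ from $\bar x$, and $q$ the point of the side $[\bar x,y]$ at distance $\beta$ from $\bar x$; since $f_\Delta$ restricts to an isometry on each side, both $p$ and $q$ are sent to the branching point of $T_\Delta$, so the $\delta$-thin hypothesis yields $d(p,q)\le\delta$. Because $[\bar x,y]$ was chosen inside ${\rm Im}(c)$, the point $q$ lies on $c$, whence $d(x,\bar x)\le d(x,q)$ by definition of the projection. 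Combining this with $d(x,p)=d(x,\bar x)-\beta$ and the triangle inequality $d(x,q)\le d(x,p)+d(p,q)$ gives $d(x,\bar x)\le d(x,\bar x)-\beta+\delta$, that is $\beta\le\delta$, which is exactly what was needed.

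I do not expect a serious obstacle here. The only delicate point is the choice of the side $[\bar x,y]$ along $c$, which is precisely what makes the auxiliary point $q$ admissible in the projection inequality; once this is arranged, the remainder is a direct bookkeeping of the three tripod branch lengths.
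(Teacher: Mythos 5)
Your proof is correct and complete. The paper itself gives no proof of this lemma (it defers to \cite{BCGS}, Section 8.2), but your argument is the standard one: the reduction of the inequality to the single bound $\beta\le\delta$ on the median branch is an exact algebraic identity, and the key step --- taking $q$ on the side $[\bar x,y]\subset{\rm Im}(c)$ so that the minimality $d(x,\bar x)\le d(x,q)$ can be played against $d(x,q)\le d(x,p)+d(p,q)\le (d(x,\bar x)-\beta)+\delta$ --- is exactly what makes the $\delta$-thin condition of Definition \ref{hypdefinition0} bite.
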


\begin{lemma}\label{ecartement}
In a $\delta$-hyperbolic space $(X,d)$, for every geodesic (segment or line) $c$, for any pair of points $x,y \in X$, if $\bar x$ and $\bar y$ are projections 
of $x$ and $y$ (respectively) on $c$, then one has:
$$ d(\bar x , \bar y) > 3\, \delta  \implies  d(x,y) \ge d(x,\bar y) + d( \bar y , y) - 4 \delta \ge d(x,\bar x) + d(\bar x , \bar y) + d( \bar y , y) - 6\, \delta \ .$$
\end{lemma}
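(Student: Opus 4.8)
The plan is to derive both inequalities from the already-available almost-equality Lemma~\ref{projection} together with the Quadrangle Lemma~\ref{proprietes}(iii), invoking the hypothesis $d(\bar x,\bar y)>3\delta$ only at one decisive point (the projections $\bar x,\bar y$ exist by Lemma~\ref{projectiongeod}). First I would record what Lemma~\ref{projection} yields for each projection separately. Since $\bar y$ lies on $c$ and $\bar x$ is a projection of $x$, applying Lemma~\ref{projection} to $x$ and the point $\bar y\in c$ gives $d(x,\bar y)\ge d(x,\bar x)+d(\bar x,\bar y)-2\delta$; symmetrically, applying it to $y$ and the point $\bar x\in c$ gives $d(y,\bar x)\ge d(y,\bar y)+d(\bar y,\bar x)-2\delta$. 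Call these $(\mathrm{A})$ and $(\mathrm{B})$. I note that $(\mathrm{A})$ amounts exactly to the \emph{second} inequality of the statement (cancel the common $d(\bar y,y)$ and the constants), so that part is free; it remains to prove the first inequality $d(x,y)\ge d(x,\bar y)+d(\bar y,y)-4\delta$.

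For this I would apply the Quadrangle Lemma~\ref{proprietes}(iii) to the four points $x,\bar x,\bar y,y$, which gives
\[
d(x,\bar y)+d(\bar x,y)\ \le\ \Max\bigl(d(x,\bar x)+d(\bar y,y)\,,\ d(x,y)+d(\bar x,\bar y)\bigr)+2\delta .
\]
The crux is to discard the first term inside the maximum. Indeed, were the maximum equal to $d(x,\bar x)+d(\bar y,y)$, then bounding the left-hand side from below by $(\mathrm{A})$ and $(\mathrm{B})$ would force $d(x,\bar x)+d(y,\bar y)+2\,d(\bar x,\bar y)-4\delta\le d(x,\bar x)+d(\bar y,y)+2\delta$, that is $d(\bar x,\bar y)\le 3\delta$, contradicting the hypothesis. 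Hence the maximum equals $d(x,y)+d(\bar x,\bar y)$.

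Consequently $d(x,y)\ge d(x,\bar y)+d(\bar x,y)-d(\bar x,\bar y)-2\delta$, and substituting $(\mathrm{B})$ into $d(\bar x,y)$ the terms $\pm\,d(\bar x,\bar y)$ cancel, leaving $d(x,y)\ge d(x,\bar y)+d(\bar y,y)-4\delta$, which is the first inequality; the second having already been observed, the chain is complete. I expect the only genuine obstacle to be this branch-elimination step: the whole role of the quantitative hypothesis $d(\bar x,\bar y)>3\delta$ is precisely to force the geodesic $[x,y]$ to run near both projections (equivalently, to keep the Gromov product $(x\mid y)_{\bar y}$ at most $2\delta$), and without it the wrong branch of the maximum cannot be excluded.
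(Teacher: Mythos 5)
Your proof is correct and follows essentially the same route as the paper's: both arguments apply Lemma~\ref{projection} twice (to get your inequalities $(\mathrm{A})$ and $(\mathrm{B})$) and the Quadrangle Lemma to the same four points, then use the hypothesis $d(\bar x,\bar y)>3\delta$ to rule out the branch $d(x,\bar x)+d(\bar y,y)$ of the maximum and conclude. The only difference is cosmetic bookkeeping in how the final chain of inequalities is assembled.
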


\begin{proof}
By Lemma \ref{proprietes} (iii), we have:
\begin{equation}\label{quadrilatere}
d(x, \bar y) + d(y , \bar x) \le \Max \big( d(x,y) + d(\bar x , \bar y) \,;\,  d(\bar x , x) +  d(\bar y , y)\big) + 2 \delta \,  .
\end{equation}
From  Lemma \ref{projection}, we get 
$$d(x, \bar y) + d(y , \bar x) \ge d(x, \bar x)  + 2 \, d(\bar x , \bar y) + d(y , \bar y) - 4 \delta > d(x, \bar x) + d(y , \bar y) + 2 \delta\,.$$ 
Plugging this estimate in \eqref{quadrilatere} gives $d(x, \bar y) + d(y , \bar x) \le d(x,y) + d(\bar x , \bar y) +
 2 \delta$ and, using Lemma \ref{projection}, that 
$$ d(x, \bar x)  + 2 \, d(\bar x , \bar y) + d(y , \bar y) - 4 \delta \le d(x, \bar y) + d(y , \bar y) +  d(\bar x , \bar y) - 2\delta \le d(x,y) + d(\bar x , \bar y) + 2 \delta \, ,$$
and this proves Lemma \ref{ecartement}.
\end{proof}

For more informations about Gromov-hyperbolic metric spaces see the original publication \cite{Gr4} and \cite{GH}, \cite{CDP} and \cite{BH}.

\subsection{Quasi-isometry and hyperbolicity}

We are going to prove a weak version of Theorem 5.12 of \cite{GH}, which is sufficient for our purposes and whose proof, under our 
stronger hypotheses, is easier to follow and to make explicit. In Theorem 5.12 of \cite{GH} it is proved that the existence of a $(\lambda , C )$ 
quasi-isometry from a metric geodesic space $Y$ to a $\delta$-hyperbolic space $X$ implies that $Y$ is $\delta''$-hyperbolic. The value of
$\delta''$ becomes explicit if one follows the computations contained in pages 82 to 88 of  \cite{GH}. Our weak version of Theorem 5.12 of \cite{GH} is the	

\begin{prop}\label{quasiisometric} Let $(Y,d_Y)$ and $(X, d_X)$ be two proper and geodesic metric spaces, if there exists a continuous map $f : Y \f X$ satisfying the following properties
\begin{itemize}
\item[(i)] for every $y , y' \in Y$, $d_X \big(f(y) , f(y') \big) \ge a\cdot d_Y(y,y') - b $,
\item[(ii)] for every pair of points $y, y' \in Y$ and every geodesic $[ y , y']$ connecting these two points, $f([y,y'])$ is rectifiable and 
$\text{\rm Length} \big(f ([y , y'])\big) \le \lambda \cdot d _Y\big( f (y) , f (y')\big) + C$ 
\end{itemize}
then, if $(X, d_X)$ is $\delta$- hyperbolic, $(Y,d_Y)$ is $\delta''$- hyperbolic with 
$$\delta'' = \dfrac{4}{a} \,\left( (6 \lambda^2\ + 14 \lambda + 5) \delta  + \frac{4 \lambda + 3}{6 \lambda + 2} \,C + b\right) \le 
\dfrac{4}{a} \,\left( (6 \lambda^2\ + 14 \lambda + 5) \delta  + C + b\right).$$
\end {prop}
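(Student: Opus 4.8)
The plan is to show that geodesic triangles in $(Y, d_Y)$ are uniformly slim, and then to invoke the slim-implies-thin half of Lemma \ref{proprietes} (ii) to deduce $\delta''$-hyperbolicity. The geometric content will be transported to the $\delta$-hyperbolic space $(X, d_X)$ through $f$: the crucial observation is that, although $f$ need not send geodesics to geodesics, hypothesis (ii) \emph{applied to every subsegment} of a geodesic $[y, y']$ of $Y$ forces the image path to be a genuine quasi-geodesic of $X$. Indeed, parametrizing $c := f([y,y'])$ by its arc length in $X$, for parameters $s \le t$ the portion of $c$ between them is the $f$-image of a geodesic subsegment of $[y,y']$, so by (ii) its length $t - s$ satisfies $t - s \le \lambda\, d_X(c(s), c(t)) + C$; together with the trivial bound $d_X(c(s), c(t)) \le t - s$ this exhibits $c$ as a $(\lambda, C)$-quasi-geodesic in the usual sense. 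The properness and geodesicity of both spaces guarantee throughout that the geodesics and projections invoked below exist.

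The technical core is then the stability of quasi-geodesics in hyperbolic spaces (a \emph{Morse lemma}): there is a constant $K = K(\lambda, C, \delta)$ such that every $(\lambda, C)$-quasi-geodesic of $X$ lies within Hausdorff distance $K$ of any geodesic segment joining its endpoints. I would establish this exactly as in \cite{GH} (pp.~82--88), the only simplification being that our hypotheses already come in the tamed, length-controlled form, so no preliminary taming step is needed; following their estimates line by line then yields the explicit value of $K$. This explicit bookkeeping of $K$ in terms of $\lambda$, $C$ and $\delta$ will be the main obstacle, and it is precisely what produces the polynomial coefficient $6\lambda^2 + 14\lambda + 5$ of $\delta$ and the term $\tfrac{4\lambda + 3}{6\lambda + 2}\, C$ appearing in $\delta''$.

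Granting the Morse constant $K$, the slimness estimate runs as follows. Let $\Delta = [p, q, r]$ be a geodesic triangle of $Y$ and let $u$ be a point of the side $[q, r]$. Fix a geodesic triangle of $X$ with vertices $f(p), f(q), f(r)$. By the stability lemma applied to the quasi-geodesic $f([q,r])$, the point $f(u)$ lies within $K$ of a point $u'$ of the side $[f(q), f(r)]$. Since $X$ is $\delta$-hyperbolic, that triangle is $\delta$-slim by Lemma \ref{proprietes} (ii), so $u'$ is within $\delta$ of a point $w$ lying on, say, $[f(p), f(q)]$. Applying the stability lemma once more, now in the reverse direction and using that $[f(p), f(q)]$ is at Hausdorff distance $\le K$ from $f([p,q])$, we find $v \in [p,q]$ with $d_X(w, f(v)) \le K$. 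Hence $d_X(f(u), f(v)) \le 2K + \delta$, and hypothesis (i) gives
\[
d_Y(u, v) \le \frac{1}{a}\bigl( d_X(f(u), f(v)) + b \bigr) \le \frac{2K + \delta + b}{a}.
\]
Thus each side of $\Delta$ lies in the $\tfrac{2K + \delta + b}{a}$-neighbourhood of the union of the other two, i.e. $\Delta$ is $\delta'''$-slim with $\delta''' = \tfrac{2K + \delta + b}{a}$.

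It then remains to convert slimness into thinness and to identify the constant. By the second assertion of Lemma \ref{proprietes} (ii) a $\delta'''$-slim triangle is $4\delta'''$-thin, so $(Y, d_Y)$ is $\delta''$-hyperbolic with $\delta'' = 4\delta''' = \tfrac{4}{a}(2K + \delta + b)$; the value of $K$ extracted from \cite{GH} is organized so that $2K + \delta = (6\lambda^2 + 14\lambda + 5)\delta + \tfrac{4\lambda+3}{6\lambda+2}C$, which yields exactly the displayed expression for $\delta''$. Finally, the second, cruder inequality of the statement is immediate, since $\tfrac{4\lambda + 3}{6\lambda + 2} \le 1$ for every $\lambda \ge 1$.
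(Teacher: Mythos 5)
Your overall strategy coincides with the paper's: push a geodesic triangle of $Y$ into $X$ via $f$, use the stability of quasi-geodesics together with the $\delta$-slimness of the comparison triangle on the vertices $f(p), f(q), f(r)$, pull the resulting point back to $Y$ with hypothesis (i), and convert slimness to thinness via Lemma \ref{proprietes} (ii). The reduction of hypothesis (ii) to a genuine $(\lambda,C)$-quasi-geodesic condition on the arc-length parametrization is correct, and the final factor $4$ and the division by $a$ appear for the same reasons as in the paper.

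There is, however, a genuine gap in the derivation of the explicit constant, which is the whole point of this proposition as opposed to Th\'eor\`eme 5.12 of \cite{GH}. You use a single symmetric Morse constant $K$ bounding the \emph{Hausdorff} distance between a quasi-geodesic and a geodesic with the same endpoints, and you invoke it twice, arriving at $d_X(f(u),f(v)) \le 2K+\delta$. But the two invocations use the two \emph{different} directions of the stability statement: the first ($f(u)$ close to $[f(q),f(r)]$) needs the constant $C_2$ of Proposition \ref{quasigeodesic} (ii) (quasi-geodesic in a neighbourhood of the geodesic), while the second ($w \in [f(p),f(q)]$ close to $f([p,q])$) only needs the smaller constant $C_1$ of Proposition \ref{quasigeodesic} (i) (geodesic in a neighbourhood of the quasi-geodesic). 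Since $C_2=(1+\lambda)C_1+\tfrac{C}{2}>C_1$, the honest Hausdorff constant is $K=C_2$, and $2K+\delta=2C_2+\delta$ is strictly larger than the paper's bound $C_1+C_2+\delta$; one checks that it is exactly $C_1+C_2+\delta=(6\lambda^2+14\lambda+5)\delta+\tfrac{4\lambda+3}{6\lambda+2}\,C$ which produces the displayed value of $\delta''$. So your closing assertion that ``the value of $K$ extracted from \cite{GH} is organized so that $2K+\delta$'' equals that expression cannot hold for a two-sided Hausdorff constant; it would require $K=\tfrac{1}{2}(C_1+C_2)$. The repair is straightforward --- keep the two constants separate and use each in its proper place, as the paper does --- but as written the argument only proves the proposition with a strictly worse constant than the one stated.
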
 

Theorem 5.12 of  \cite{GH} is stronger than this proposition, because it only assumes that $f$ is a  quasi-isometry, {\sl i.e.} that the 
path $f ([y , y'])$ is a quasi-geodesic which may be discontinuous.

\medskip
The first step in the proof of Proposition \ref{quasiisometric} is Proposition 1.6 of Chapter III of \cite{BH}, which may be written

\begin{lemma}\label{quasigeodesic1}
In a $ \delta$-hyperbolic metric space $(X , d)$, for every continuous rectifiable path $ c : [0 , 1] \f X$, every point $x$ of any geodesic segment 
joining $c(0)$ to $c(1)$ verifies 
$$d(x, \text{\rm Im} (c) ) \le \delta \left( 1 + \log_2^+  \left(\frac{\text{\rm length} (c)}{\delta}\right)\right), \quad 
\text{\rm where} \quad \log_2^+ (x) := \max \big(\log_2 (x) , 0  \big).$$
\end {lemma}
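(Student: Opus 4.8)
The plan is to prove the estimate by a dyadic bisection of the path $c$ combined with iterated use of the slimness of geodesic triangles (Lemma \ref{proprietes} (ii)). First I would reparametrize $c$ proportionally to arclength, so that $\mathrm{length}(c|_{[s,t]}) = (t-s)\,L$ where $L := \mathrm{length}(c)$; this changes neither $\mathrm{Im}(c)$ nor $L$. Fix a geodesic segment $[c(0),c(1)]$ and a point $x$ on it. The idea is that $x$ is $\delta$-close to a side of the triangle $c(0),c(1/2),c(1)$, that side is again $\delta$-close to a side of a triangle built on a finer dyadic subdivision, and so on; after $n$ bisection steps $x$ lies within $n\delta$ of a geodesic joining two points of $\mathrm{Im}(c)$ at distance at most $L/2^n$, and such a short geodesic is trivially close to $\mathrm{Im}(c)$.

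Concretely, I would build recursively dyadic subintervals $[0,1] = I_0 \supset I_1 \supset \cdots \supset I_n$ with $I_j = [a_j,b_j]$ of length $2^{-j}$, together with points $x = x_0, x_1, \ldots , x_n$, such that each $x_j$ lies on a chosen geodesic $[c(a_j), c(b_j)]$ and $d(x_{j-1}, x_j) \le \delta$. For the inductive step, given $x_j \in [c(a_j), c(b_j)]$, set $m_j := (a_j + b_j)/2$ and consider the geodesic triangle with vertices $c(a_j), c(m_j), c(b_j)$; applying the $\delta$-slimness of Lemma \ref{proprietes} (ii) to the side $[c(a_j), c(b_j)]$ containing $x_j$ produces a point $x_{j+1}$ on one of the two halves $[c(a_j), c(m_j)]$ or $[c(m_j), c(b_j)]$ with $d(x_j, x_{j+1}) \le \delta$, and I take $I_{j+1}$ to be the corresponding half of $I_j$. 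After $n$ steps the triangle inequality gives $d(x, x_n) \le n\delta$, while $x_n$ sits on a geodesic between the points $c(a_n), c(b_n) \in \mathrm{Im}(c)$, whose length is $d(c(a_n), c(b_n)) \le \mathrm{length}(c|_{I_n}) = L/2^n$; since a point on a geodesic is within half its length of an endpoint, $d(x_n, \mathrm{Im}(c)) \le L/2^{n+1}$. Hence $d(x, \mathrm{Im}(c)) \le n\delta + L/2^{n+1}$ for every integer $n \ge 0$.

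It then remains only to optimize over $n$. If $L \le \delta$, taking $n = 0$ already gives $d(x, \mathrm{Im}(c)) \le L/2 \le \delta = \delta\bigl(1 + \log_2^+(L/\delta)\bigr)$. If $L > \delta$, choosing $n = \lfloor \log_2(L/\delta)\rfloor \ge 0$ yields $L/2^{n+1} \le \delta$ (since $2^n > L/(2\delta)$) and $n \le \log_2(L/\delta)$, so $d(x, \mathrm{Im}(c)) \le n\delta + \delta \le \delta\bigl(1 + \log_2(L/\delta)\bigr) = \delta\bigl(1 + \log_2^+(L/\delta)\bigr)$, as claimed. The only genuinely delicate point is the bookkeeping in the recursion, namely keeping track of which dyadic half and which geodesic the point $x_j$ lives on and verifying the length bound $d(c(a_n), c(b_n)) \le L/2^n$ at each stage, together with the mild care needed to reparametrize a rectifiable path proportionally to arclength; the hyperbolicity hypothesis itself enters only through the single clean application of slimness at each step.
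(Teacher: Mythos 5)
Your argument is correct, and it is exactly the classical dyadic--bisection proof of this estimate: the paper itself gives no proof but cites Proposition 1.6 of Chapter III.H of \cite{BH}, whose proof is precisely this iteration of $\delta$-slimness (Lemma \ref{proprietes} (ii)) over successive midpoint subdivisions followed by the optimization $d(x,\mathrm{Im}(c))\le n\delta+L/2^{n+1}$. Your choice of $n$ and the two cases $L\le\delta$, $L>\delta$ reproduce the scale-invariant constant $\delta\bigl(1+\log_2^+(L/\delta)\bigr)$ stated in the lemma, so nothing is missing.
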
 

The second step in the proof of Proposition \ref{quasiisometric} is the following result, which can be found in \cite{BH} (chapter III, 
Th\'eor\`eme 1.7), we only explicit here the constants $C_1$ and $C_2$ and simplify the proof\footnote{Indeed, in \cite{BH} (chapter III), 
the proof of Theorem 1.7 is longer because the authors have to take into account the case of non continuous quasi-geodesics, this is 
the aim of their Lemma 1.11 (p. 403) to prove that the solution in the non continuous case is a consequence of the solution in the continuous case.}.

\begin{prop}\label{quasigeodesic}
In a $ \delta$-hyperbolic metric space $(X , d)$, given $\lambda \ge 1$ and $C \ge 0$, for every continuous rectifiable path $ c : [0 , a] \f X$
which verifies $\text{\rm Length} \big(c ([t , t'])\big) \le \lambda \cdot d \big( c(t) , c(t')\big) + C$, for every $[t , t'] \subset [0 , a]$,
\begin{itemize}
\item[(i)] every geodesic segment $[c(0) , c(a)]$ lies in the $C_1$-neighbourhood of the image of $c$, where 
$C_1 := (6 \lambda + 2) \delta + \dfrac{C}{6 \lambda + 2}$
\item[(ii)] the image of $c$ lies in the $C_2 $--neighbourhood of the geodesic segment $[c(0) , c(a)]$, where $C_2 = C_2 (\lambda , C, \delta) 
:= (1 + \lambda)\, C_1 + \frac{C}{2}$.
\end{itemize}
\end {prop}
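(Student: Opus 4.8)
The plan is to prove the two inclusions (i) and (ii) separately, using Lemma \ref{quasigeodesic1} as the only substantial input and exploiting the length (tameness) hypothesis on every subarc to \emph{linearise} the resulting estimates. Conclusion (i) (\emph{the geodesic is near the path}) is the heart of the matter and rests on a \emph{farthest point} argument carried out on the geodesic; conclusion (ii) (\emph{the path is near the geodesic}) is then deduced from (i) together with the length control.

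For (i), set $\sigma := [c(0),c(a)]$ and let $D := \Max_{p\in\sigma} d\big(p,\operatorname{Im}(c)\big)$, which is attained at some $x_0\in\sigma$ because $\sigma$ is compact and $X$ is proper. I would take the subsegment $[y_-,y_+]\subset\sigma$ centred at $x_0$ with $d(x_0,y_\pm)=2D$ (truncating at the endpoints of $\sigma$ when $\sigma$ is too short), and, since $y_\pm\in\sigma$, choose $p_\pm=c(t_\pm)\in\operatorname{Im}(c)$ with $d(y_\pm,p_\pm)\le D$. Concatenating the geodesic $[y_-,p_-]$, the subarc $c|_{[t_-,t_+]}$ and the geodesic $[p_+,y_+]$ yields a continuous rectifiable path $\gamma$ from $y_-$ to $y_+$. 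On one hand the length hypothesis and the triangle inequality give $\operatorname{length}(\gamma)\le 2D+\lambda\,d(p_-,p_+)+C\le(6\lambda+2)D+C$ (using $d(p_-,p_+)\le 6D$, and noting this bound also covers the truncated cases since $4\lambda\le 6\lambda+2$); on the other hand every point of $\gamma$ lies at distance $\ge D$ from $x_0$, whence $d\big(x_0,\operatorname{Im}(\gamma)\big)\ge D$. Applying Lemma \ref{quasigeodesic1} to $\gamma$ and to the geodesic $[y_-,y_+]\ni x_0$ then produces the self-referential inequality $D\le\delta\big(1+\log_2^+\!\big(\tfrac{(6\lambda+2)D+C}{\delta}\big)\big)$.

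The final step of (i) is to extract from this inequality the \emph{linear} bound $D\le C_1$. The left-hand side grows linearly in $D$ while the right-hand side grows only logarithmically, so $D$ is bounded; the point of the stated value $C_1=(6\lambda+2)\delta+\tfrac{C}{6\lambda+2}$ is that this explicit linear expression already dominates the logarithmic quantity. I would verify this by the elementary estimate $\log_2^+(u)\le u$ together with $\lambda\ge 1$, checking directly that assuming $D>C_1$ contradicts the displayed inequality. For (ii), fix $p=c(s)\in\operatorname{Im}(c)$, let $x\in\sigma$ realise $E:=d(p,\sigma)$, and assume $E>C_1$ (otherwise there is nothing to prove, as $C_1\le C_2$). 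Using (i), the projection $x$ is approached by $c$, and I choose the two times $t_-\le s\le t_+$ at which $c$ crosses the sphere $\partial B(x,C_1)$ on either side of $s$, so that $d(c(t_\pm),x)=C_1$ and hence $d(c(t_-),c(t_+))\le 2C_1$. Since $x\in\sigma$, each $c(t_\pm)$ is within $C_1$ of $\sigma$, and the \emph{nearer} of the two subarcs $c|_{[t_-,s]},\,c|_{[s,t_+]}$ has length at most $\tfrac12\operatorname{length}\big(c|_{[t_-,t_+]}\big)$; therefore $E\le\tfrac12\operatorname{length}\big(c|_{[t_-,t_+]}\big)+C_1\le\tfrac12\big(2\lambda C_1+C\big)+C_1=(1+\lambda)C_1+\tfrac{C}{2}=C_2$, the middle step being the length hypothesis. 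Configurations in which $c$ fails to return near $x$ on one side of $s$ meet an endpoint $c(0)$ or $c(a)\in\sigma$, and are controlled directly by the length bound applied to the corresponding full subarc.

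The step I expect to be the main obstacle is precisely the passage, in (i), from the transcendental self-referential inequality furnished by Lemma \ref{quasigeodesic1} to the clean linear constant $C_1$: the standard Morse-lemma conclusion is only logarithmic in the data, and what must be argued is that the stated explicit linear expression is a valid (if generous) upper bound for that logarithmic quantity, \emph{uniformly} in $\delta,\lambda,C$ and using $\lambda\ge 1$. A secondary technical point, pervasive throughout, is the systematic use of properness to ensure that the relevant extrema — the farthest point $x_0$ on $\sigma$ and the sphere-crossing times $t_\pm$ — are actually attained, and to justify the continuity arguments underlying the choice of $t_\pm$.
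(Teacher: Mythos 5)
Your part~(i) follows exactly the argument that produces the paper's constant $C_1$ (the paper gives no in-text proof but defers to \cite{BH}, chapter III, Theorem~1.7, in the continuous case, from which these constants are extracted): farthest point $x_0$ of the geodesic from ${\rm Im}(c)$, flanking points $y_\pm$ at distance $2D$, a comparison path of length $\le (6\lambda+2)D+C$ avoiding the open ball $B(x_0,D)$, then Lemma \ref{quasigeodesic1}. The one flaw there is the device you propose for the final extraction: the bound $\log_2^+(u)\le u$ turns the self-referential inequality into $D\le \delta+(6\lambda+2)D+C$, which holds for \emph{every} $D$ and so yields no contradiction and no bound at all. A substitute that does work: set $K:=6\lambda+2\ge 8$ and $f(D):=\delta\bigl(1+\log_2^+\bigl((KD+C)/\delta\bigr)\bigr)$; check $f(C_1)\le C_1=K\delta+C/K$ (this reduces to $1+2\log_2 K\le K$ for $K\ge 8$ together with $\log_2(1+x)\le x/\ln 2$) and that $f'(D)\le 1/(K\ln 2)<1$ for $D\ge C_1$, so that $D\le f(D)$ forces $D\le C_1$. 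With that repair, (i) is correct and is essentially the intended proof.

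Part~(ii), however, has a genuine gap. You centre spheres at the point $x\in\sigma$ nearest to $c(s)$ and assert the existence of crossing times $t_-\le s\le t_+$ with $d(c(t_\pm),x)=C_1$; but statement~(i) only guarantees \emph{some} time $t^*$ with $d(c(t^*),x)\le C_1$, and all such times may lie on one side of $s$, so $t_+$ (say) need not exist. Your fallback for that case --- ``controlled directly by the length bound applied to the corresponding full subarc'' --- does not close it: if $c$ never re-enters $B(x,C_1)$ after time $s$, the relevant subarc runs from $c(t_-)$ to $c(a)$, and its endpoints can be at distance comparable to $d(x,c(a))$, which is unbounded in terms of $\delta,\lambda,C$; the tameness bound on its length and the halving trick then give nothing like $C_2$. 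The missing idea, which is what \cite{BH} uses and which yields precisely $C_2=(1+\lambda)C_1+\frac{C}{2}$, is a connectedness argument on $\sigma$ rather than spheres around the nearest point: take the maximal interval $(t_1,t_2)\ni s$ on which $c$ stays outside the closed $C_1$-neighbourhood of $\sigma$; by~(i), $\sigma$ is covered by the two closed sets of points of $\sigma$ lying within $C_1$ of $c([0,t_1])$ and within $C_1$ of $c([t_2,a])$ respectively; the first contains $c(0)$, the second $c(a)$, so by connectedness of $\sigma$ they intersect at some $w\in\sigma$ --- a single point, in general different from your $x$ --- with $d(c(t'),w)\le C_1$ and $d(c(t''),w)\le C_1$ for some $t'\le t_1\le s\le t_2\le t''$. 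Then $d(c(t'),c(t''))\le 2C_1$, the subarc $c([t',t''])$ has length $\le 2\lambda C_1+C$, and your own halving computation applies verbatim to give $d(c(s),\sigma)\le d(c(s),w)\le (1+\lambda)C_1+\frac{C}{2}$. Replacing your sphere-crossing step by this argument closes the proof of~(ii).
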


\begin{proof}[End of the proof of Proposition \ref{quasiisometric}]
Let $\Delta := [y_0 , y_1 , y_2]$ be a geodesic triangle in $(Y,d_Y)$, {\sl i.e.} the union of geodesics $[y_0 , y_1 ]$, $[ y_1 , y_2 ]$ and
$[ y_2 , y_0 ]$ of $(Y,d_Y)$. Let $u$ be any point of the side $[ y_1 , y_2 ]$. By the hypothesis (ii), the $\delta$-hyperbolicity of $(X,d)$ and 
Proposition \ref{quasigeodesic} (ii), one has $d_X \big( f(u), [f(y_1), f(y_2)]\big) < C_2$, for any choice of the geodesic segment 
$[f(y_1), f(y_2)]$ joining the endpoints of the path $f ([ y_1 , y_2 ])$. We now choose a point $ v \in [f(y_1), f(y_2)]$ such that $d_X \big( f(u), v \big) 
< C_2$. Let us choose geodesics $[f(y_0), f(y_1)]$ and $[f(y_0), f(y_2)]$ in order to complete the geodesic triangle $\Delta' :=
[f(y_0), f(y_1) , f(y_2)]$; as the triangle $\Delta'$ is $\delta$-slim (see Lemma \ref{proprietes} (iii)), there exists $v' \in [f (y_0) , f (y_1)] \cup 
[f (y_0) , f (y_2)] $ such that $d_X(v,v') \le \delta$.

By what we just have proved, one has either $v' \in [f (y_0) , f (y_1)] $, and then, by the hypothesis (ii), the $\delta$-hyperbolicity of $(X,d)$ 
and Proposition \ref{quasigeodesic} (i), one has $d_X \big( v'\, ,\, f ([y_0 , y_1]) \big)  < C_1 $, or $v' \in [f (y_0) , f (y_2)] $, and then, by the same argument, 
one has $d_X \big( v'\, ,\, f ([y_0 , y_2]) \big)  < C_1 $; in both cases, there exists 
$u' \in [y_0 , y_1] \cup [y_0 , y_2]$ such that $d_X \big(v', f (u') \big) < C_1 $. Hence, for every $ u \in [ y_1 , y_2 ]$ there exists
$u' \in [y_0 , y_1] \cup [y_0 , y_2]$ such that $d_X \big(f(u),f(u')\big) <  C_1 + C_2 + \delta$, thus (using the hypothesis (i)) such that 
$d_Y (u , u' ) <  \dfrac{1}{a}\, (C_1 + C_2 + b +\delta)\ $. This proves that the triangle $\Delta$ is $\frac{\delta''}{4}$-slim (in the sense of Lemma 
\ref{proprietes} (ii)) where $ \delta'' = \dfrac{4}{a} \,(C_1 + C_2 + b + \delta) $, thus (using  Lemma \ref{proprietes} (ii)) that it is $\delta''$-thin 
(in the sense of Definitions \ref{hypdefinition0}). We conclude that $(Y, d_Y)$ is $\delta''$-hyperbolic.
\end{proof}

\subsection{About actions of groups}\label{aboutactions}

\subsubsection{General properties}

Let $\Gamma$ be a group acting by isometries on a metric space. We recall that $\Gamma$ is said to be discrete if it is a discrete subgroup of the isometry group of $(X,d)$ for the compact-open topology. For every $R> 0$, and every $x \in X$, we defined
$\Sigma_R (x) := \{\g \in \Gamma : \g\,x \in \overline B_X(x, R)\}$.

The following Proposition is proved (for example) in \cite{BCGS} Proposition 8.12:

\begin{prop}\label{discret1}
On a metric space $(X,d)$ every faithful and proper action by isometries is discrete.
Conversely, if $(X,d)$ is a proper metric space, then every faithful and discrete action by isometries is proper.
\end{prop}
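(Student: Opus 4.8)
The plan is to prove the two implications separately, in each case realizing $\Gamma$ as a subgroup of $\mathrm{Isom}(X,d)$ through the faithful action and describing the compact-open topology near the identity by means of the basic neighbourhoods
$$ W(K,\varepsilon) := \{ f \in \mathrm{Isom}(X,d) : d(f(z),z) < \varepsilon \ \text{ for all } z \in K \}, $$
where $K \subset X$ is compact and $\varepsilon > 0$. Recall that on a metric space these sets form a neighbourhood basis of $\mathrm{id}$ for the compact-open topology, which coincides with the topology of uniform convergence on compact subsets.

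For the first implication I would show directly that $\mathrm{id}$ is isolated in $\Gamma$. Fix $x_0 \in X$; by properness the set $\Sigma_1(x_0)$ is finite, say $\Sigma_1(x_0) = \{\mathrm{id}, \gamma_1, \ldots , \gamma_k\}$. By faithfulness each $\gamma_i \neq \mathrm{id}$ moves some point $z_i$, so that $\delta_i := d(z_i, \gamma_i z_i) > 0$. Taking $K := \{x_0, z_1, \ldots , z_k\}$ and $\varepsilon := \tfrac12 \min(1, \delta_1, \ldots , \delta_k)$, any $\gamma \in \Gamma \cap W(K,\varepsilon)$ satisfies $d(x_0, \gamma x_0) < 1$, hence lies in $\Sigma_1(x_0)$, while $d(z_i, \gamma z_i) < \delta_i$ excludes $\gamma = \gamma_i$ for every $i$; therefore $\gamma = \mathrm{id}$ and $W(K,\varepsilon) \cap \Gamma = \{\mathrm{id}\}$, which is exactly discreteness.

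For the converse I would assume $X$ proper and argue by contradiction: suppose some $\Sigma_R(x_0)$ is infinite and choose pairwise distinct $\gamma_n \in \Sigma_R(x_0)$. Being isometries, the $\gamma_n$ are equicontinuous, and for each $y \in X$ the orbit $\{\gamma_n y\}_n$ lies in the ball $\overline B_X(x_0, R + d(x_0,y))$, which is compact since $X$ is proper. By the Arzel\`a--Ascoli theorem, after extracting a subsequence I may assume $\gamma_n \to g$ uniformly on every compact set, for some map $g : X \to X$. Discreteness yields a compact $K_0$ and $\varepsilon_0 > 0$ with $W(K_0, \varepsilon_0) \cap \Gamma = \{\mathrm{id}\}$. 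But for $m \neq n$ large and every $z \in K_0$,
$$ d\big( \gamma_m^{-1}\gamma_n z,\, z \big) = d\big( \gamma_n z,\, \gamma_m z \big) \le d(\gamma_n z, g z) + d(g z, \gamma_m z) < \varepsilon_0, $$
so $\gamma_m^{-1}\gamma_n \in W(K_0,\varepsilon_0)$; as the $\gamma_n$ are distinct and the action is faithful, $\gamma_m^{-1}\gamma_n \neq \mathrm{id}$, contradicting the choice of $W(K_0,\varepsilon_0)$. Hence every $\Sigma_R(x_0)$ is finite and the action is proper.

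The routine points are the fact that the sets $W(K,\varepsilon)$ do form a neighbourhood basis of $\mathrm{id}$ in the compact-open topology, and the refinement of an arbitrary open neighbourhood of $\mathrm{id}$ witnessing discreteness down to one of the form $W(K_0,\varepsilon_0)$ (using compactness of the $K_i$ to turn the inclusions $\mathrm{id}(K_i) \subset U_i$ into a single uniform $\varepsilon_0$). I expect the one genuinely non-formal step to be the Arzel\`a--Ascoli extraction in the converse: it is precisely there that properness of $(X,d)$ enters, through the compactness of closed balls that makes the orbits $\{\gamma_n y\}_n$ relatively compact and supplies the limit map $g$ along which the differences $\gamma_m^{-1}\gamma_n$ accumulate at the identity. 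Note that $g$ need not be used as an element of $\mathrm{Isom}(X,d)$, so no surjectivity of $g$ has to be checked.
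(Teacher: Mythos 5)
Your proof is correct: the forward direction isolates the identity in $W(K,\varepsilon)$ using the finiteness of $\Sigma_1(x_0)$ together with faithfulness, and the converse uses properness of $X$ via Arzel\`a--Ascoli to make the differences $\gamma_m^{-1}\gamma_n$ accumulate at the identity, contradicting discreteness; both steps are sound (including the implicit use of the topological-group structure of $\mathrm{Isom}(X,d)$ so that isolating $\mathrm{id}$ suffices). The paper itself does not prove this proposition but cites \cite{BCGS}, Proposition 8.12, and your argument is the standard one given there.
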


\begin{lemma}\label{autofidele}
Every proper action by isometries of a group $\Gamma$ on a metric space $(X,d)$ verifies:
\begin{itemize}
\item[(i)] the quotient space $\Gamma\backslash X$ is a metric space when endowed with the quotient-distance $\bar d$ defined by
$ \bar d (\Gamma \cdot x, \Gamma \cdot y) :=  \inf_{\g \in \Gamma} d( x , \g \, y) $,

\item[(ii)] if $(X,d)$ is a proper space and if $ (\Gamma\backslash X , \bar d)$ has finite diameter, then $\Gamma\backslash X$ is compact,

\item[(iii)] if $\Gamma$ acts without fixed points, then the action is faithful and discrete.

\item[(iv)] if $\Gamma$ is torsion-free, then the action is faithful, discrete and without fixed points
\end{itemize}
\end{lemma}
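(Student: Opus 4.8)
The plan is to prove (i) first, as (ii)--(iv) will then follow quickly, the last two with the help of the already-stated Proposition~\ref{discret1}. For (i) the routine verifications come first. Well-definedness of $\bar d$ on orbits is immediate, since replacing $x$ by $\g_0 x$ merely reindexes the infimum: $\inf_\g d(\g_0 x, \g y) = \inf_\g d(x, \g_0^{-1}\g y)$, and $\g_0^{-1}\g$ ranges over all of $\Gamma$ as $\g$ does; the same applies to $y$, using that each $\g$ acts by isometries. Symmetry follows from $\inf_\g d(y,\g x) = \inf_\g d(\g^{-1}y, x) = \inf_{\g'} d(x, \g' y)$, and for the triangle inequality I would note that $d(x, \g\g' z) \le d(x, \g y) + d(y, \g' z)$ for all $\g, \g'$ and take the infimum over both. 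Positivity is clear.

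The one delicate point, which I expect to be the main obstacle, is the separation axiom $\bar d(\Gamma x, \Gamma y) = 0 \implies \Gamma x = \Gamma y$. The idea is to show that the infimum defining $\bar d$ is in fact attained, which is precisely where properness of the action is used. Writing $m := \inf_\g d(x, \g y)$ and choosing a minimizing sequence $\g_n$, one has $d(x, \g_n y) \le m + 1$ for $n$ large, hence $d(y, \g_n y) \le d(y, x) + d(x, \g_n y) \le d(x,y) + m + 1$. Thus every such $\g_n$ lies in the finite set $\Sigma_{d(x,y)+m+1}(y)$, so the minimizing sequence takes only finitely many values and some $\g$ satisfies $d(x, \g y) = m$. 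When $m = 0$ this gives $\g y = x$, i.e. $\Gamma x = \Gamma y$, which settles (i).

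For (ii) I would first observe that the projection $\pi : X \to \Gamma\backslash X$ is $1$-Lipschitz, since taking $\g = e$ in the infimum yields $\bar d(\Gamma x, \Gamma y) \le d(x,y)$; in particular $\pi$ is continuous. If $\diam(\Gamma\backslash X) = D < +\infty$, fix a basepoint $x_0$; then every orbit $\Gamma x$ satisfies $\bar d(\Gamma x_0, \Gamma x) \le D$, and by the attainment established above there is $\g$ with $d(x_0, \g x) \le D$. Hence $\pi\big(\overline B_X(x_0, D)\big) = \Gamma\backslash X$. As $(X,d)$ is now assumed proper, $\overline B_X(x_0, D)$ is compact, and its continuous image under $\pi$ is compact, so $\Gamma\backslash X$ is compact.

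Finally, (iii) and (iv) are short. If $\Gamma$ acts without fixed points and some $\g$ acts as the identity map, then $\g$ fixes every point, so $\g = e$; the action is therefore faithful, and being faithful and proper it is discrete by Proposition~\ref{discret1}, giving (iii). For (iv) it suffices, assuming $\Gamma$ torsion-free, to check the action is without fixed points, after which (iii) applies. If $\g x = x$ for some $\g \ne e$, then $\g^k x = x$ for all $k$, so $\langle\g\rangle$ is contained in the stabilizer $\{h : h x = x\} \subset \Sigma_1(x)$, which is finite by properness; thus $\g$ would have finite order, contradicting the torsion-free hypothesis. Hence the action is fixed-point-free, and (iv) follows from (iii).
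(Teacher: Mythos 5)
Your proof is correct and complete; the paper itself gives no argument for this lemma (it only cites \cite{BCGS}, Lemma 8.13), and your treatment is the standard one that such a reference would contain. In particular you correctly identify the only nontrivial point, namely that properness forces the infimum defining $\bar d$ to be attained (via the finiteness of $\Sigma_{d(x,y)+m+1}(y)$), which yields both the separation axiom in (i) and the surjectivity of $\pi$ on a compact ball in (ii), and your reductions of (iii) and (iv) to Proposition~\ref{discret1} and to the finiteness of stabilizers are exactly as intended.
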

For a proof of this result see (for example) \cite{BCGS}, Lemma 8.13.

The following remark is well known, for a proof, see for instance \cite{BCGS}, Remark 8.14

\begin{remark}\label{virtuelcyclique}
Every non trivial, torsion-free, virtually cyclic group is isomorphic to $(\Z , +)$.
\end{remark}

\subsubsection{Properties of actions on Gromov-hyperbolic spaces}

Let $(X , d)$  be any $\delta$-hyperbolic space, denote by $\partial X $ its \emph{ideal boundary}\footnote{For two definitions of the ideal 
boundary and of its topology, see Definition 7.1 p. 117 of \cite{GH} and chapter 2 of \cite{CDP}, these two definitions being equivalent by  Proposition 
7.4 p. 120 of \cite{GH}.}. It is well known that every isometry $\gamma$ of $(X , d)$ can be extended as a continuous mapping from $X \cup \partial X $ to $X \cup \partial X $ (see for example Proposition 11.2.1 p. 134 of \cite{CDP}). An isometry $\g$ of $(X , d)$
is said to be
\begin{itemize}
\item \emph{elliptic} if, for at least one $x \in X$ (thus for every $x \in X$), the sequence $ k \mapsto \gamma^k x$ is bounded,

\item \emph{parabolic} if, for at least one $x \in X$ (thus for every $x \in X$), the sequence $ k \mapsto \gamma^k x$ admits one and only one accumulation 
point, denoted by $\gamma^\infty$, located on the ideal boundary $\partial X $ ($\gamma^\infty$ does not depend on the choice of $x$),

\item \emph{hyperbolic} if, for at least one $x \in X$ (thus for every $x \in X$), the map $ k \f  \gamma^k x$ is a quasi-isometry from $\Z$ to $X$.
\end{itemize}

The following Theorem is classical.

\begin{theorem}\label{ellparahyp} \emph{(see \cite{CDP}, Th\'eor\`eme 9.2.1 p. 98)}
On a $\delta$-hyperbolic space, every isometry is either elliptic, or parabolic, or hyperbolic.
\end{theorem}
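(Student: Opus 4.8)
The statement is the classical trichotomy for isometries of a (proper, geodesic) $\delta$-hyperbolic space, and I would organise the proof around the asymptotic displacement. Fix a basepoint $x \in X$ and set $a_n := d(x , \gamma^n x)$; since $\gamma$ is an isometry, $(a_n)_{n \ge 0}$ is subadditive, $a_{m+n} \le a_m + a_n$, so by Fekete's lemma the limit
$$\ell(\gamma) := \lim_{n \to +\infty} \frac{a_n}{n} = \inf_{n \ge 1} \frac{a_n}{n} \ge 0$$
exists, and it is independent of $x$ because $|a_n - d(y,\gamma^n y)| \le 2\, d(x,y)$. I would then split into three situations which I first check are exhaustive and mutually exclusive: (E) the bi-infinite orbit $(\gamma^n x)_{n \in \Z}$ is bounded; (H) $\ell(\gamma) > 0$; (P) the orbit is unbounded and $\ell(\gamma) = 0$. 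Indeed $\ell(\gamma) > 0$ forces $a_n \ge \ell(\gamma)\, n \to +\infty$, hence an unbounded orbit, while a bounded orbit forces $\ell(\gamma) = 0$. In case (E) the orbit lies in a bounded, hence (by properness of $X$) relatively compact, part of $X$, so all its accumulation points lie in $X$ and $\gamma$ is elliptic directly from the definition.

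In case (H) the plan is to show that $n \mapsto \gamma^n x$ is a quasi-isometric embedding of $\Z$, which is precisely the definition of a hyperbolic isometry. The Lipschitz upper bound $a_{|m-n|} = d(\gamma^m x, \gamma^n x) \le |m-n|\, a_1$ is immediate from subadditivity; the matching lower bound $d(\gamma^m x, \gamma^n x) \ge \tfrac{1}{\lambda}|m-n| - c$ is the crux. Here I would exploit the homogeneity of the orbit under $\gamma$ to reduce the local quasi-geodesic constants to a single quantity measuring the backtracking at one vertex, and use that $\ell(\gamma) > 0$ keeps this backtracking below the hyperbolicity threshold; then the stability of quasi-geodesics in a $\delta$-hyperbolic space (Proposition \ref{quasigeodesic} together with Lemma \ref{quasigeodesic1}) promotes the local property to a global one. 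Once $(\gamma^n x)_n$ is a global quasi-geodesic, its two ends converge to distinct boundary points $\gamma^- = \lim_{n \to -\infty} \gamma^n x$ and $\gamma^+ = \lim_{n \to +\infty} \gamma^n x$ of $\partial X$ (these quasi-geodesics stay within bounded distance of genuine geodesics joining two distinct ideal points, again by Proposition \ref{quasigeodesic}).

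In case (P) the plan is to prove that the forward and backward orbits converge to one and the same boundary point, which is then the unique accumulation point, making $\gamma$ parabolic. Writing the Gromov product $(\,y \mid z\,)_u := \tfrac12\big(d(u,y)+d(u,z)-d(y,z)\big)$, $\delta$-hyperbolicity is equivalent, up to the constant, to the four-point inequality $(\,y \mid z\,)_x \ge \min\big((\,y \mid w\,)_x, (\,w \mid z\,)_x\big) - \delta$, itself a reformulation of the thin-triangle condition of Lemma \ref{proprietes}. The main obstacle is to show that $(\gamma^m x \mid \gamma^n x)_x \to +\infty$ as $m, n \to +\infty$: I would follow the classical argument (as in \cite{CDP}), combining this four-point inequality with subadditivity and the two hypotheses $\ell(\gamma)=0$ and unbounded orbit to force the products to diverge. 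Since $(\gamma^m x \mid \gamma^n x)_x \le \min(a_m, a_n)$, divergence of the products also yields $a_n \to +\infty$ for free, so no accumulation survives in $X$; by the definition of $\partial X$ the forward orbit then converges to a single $\xi^+ \in \partial X$, and symmetrically the backward orbit to some $\xi^-$. Finally $\gamma$ fixes both $\xi^+$ and $\xi^-$, and if these were distinct the orbit would traverse a quasi-geodesic axis joining them, forcing $a_n$ to grow linearly and hence $\ell(\gamma) > 0$, a contradiction; therefore $\xi^- = \xi^+$ and $\gamma$ is parabolic.

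The conceptual difficulty is the same in both non-elliptic cases and is where $\delta$-hyperbolicity genuinely enters: upgrading the elementary subadditive and triangle-inequality control of $a_n$ into honest control of the Gromov products, which is exactly what separates the linear-growth (hyperbolic, two ideal fixed points) regime from the sublinear (parabolic, one ideal fixed point) one.
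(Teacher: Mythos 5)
First, a point of order: the paper does not prove this statement at all --- it is quoted verbatim from \cite{CDP}, Th\'eor\`eme 9.2.1, so there is no in-paper argument to measure you against, and I assess your sketch on its own terms. Your architecture (trichotomy by boundedness of the orbit and positivity of $\ell(\gamma)$, checked to be exhaustive and mutually exclusive) is the standard one and is correct, and the elliptic case is indeed immediate from the definition. In the hyperbolic case, however, you have manufactured a difficulty that is not there: since Fekete's lemma gives $\ell(\gamma)=\inf_{n\ge 1} a_n/n$, you already have $d(\gamma^m x,\gamma^n x)=a_{|m-n|}\ge \ell(\gamma)\,|m-n|$ for all $m,n$, which together with $a_{|m-n|}\le a_1|m-n|$ makes $k\mapsto\gamma^k x$ a quasi-isometric embedding with additive constant $0$; no backtracking estimate, no local-to-global stability, and in fact no hyperbolicity of $X$ at all is needed for this case.

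The genuine gap is in the parabolic case, at exactly the step you flag as ``the main obstacle'' and then dispose of with ``I would follow the classical argument.'' Proving that $(\gamma^m x\mid\gamma^n x)_x\to+\infty$ when the orbit is unbounded and $\ell(\gamma)=0$ \emph{is} the content of the theorem, and it does not follow from ``the four-point inequality combined with subadditivity'': those two facts give no lower bound on $(\gamma^m x\mid\gamma^n x)_x=\tfrac12\,(a_m+a_n-a_{n-m})$ and are compatible with an oscillating $a_n$. The missing ingredient is a quantitative hyperbolicity criterion of the type: if for some basepoint $x$ and some $n$ one has $(\gamma^{-n}x\mid\gamma^n x)_x$ smaller than roughly $\tfrac12\,d(x,\gamma^n x)-2\delta$, then the chain $(\gamma^{kn}x)_{k\in\Z}$ is a local, hence (by stability of quasi-geodesics) global, quasi-geodesic and $\gamma$ is hyperbolic. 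It is the \emph{negation} of this criterion --- the inequality $a_{2n}=2a_n-2(\gamma^{-n}x\mid\gamma^n x)_x\le a_n+4\delta$, valid for every $n$ and every basepoint when $\gamma$ is not hyperbolic --- that actually forces the sublinear growth and the convergence of the orbit to a single boundary point; this is the work done in the lemmas preceding Th\'eor\`eme 9.2.1 in \cite{CDP}. Until you state and prove such a criterion, the parabolic case is not established. Your closing argument that $\xi^-=\xi^+$ (boundedness of $(\gamma^{-n}x\mid\gamma^n x)_x$ by some $M$ would give $a_{2n}\ge 2a_n-2M$, hence $\ell(\gamma)>0$) is correct, but it sits downstream of the unproved step.
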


If $\g$ is an hyperbolic (resp. parabolic) isometry, it is a classical result [see for example \cite{CDP}, Proposition 10.6.6 p. 118, (resp. \cite{GH}, 
Th\'eor\`eme 17 in Chapter 8)] that the action of $\g$ on $ X \cup \partial X $ admits exactly two (resp. one) fixed points, 
which are the limits $\gamma^+$ and $\gamma^-$ of $ \gamma^p x$ and $ \gamma^{-p} x$ when $ p \to +\infty$ (resp. which is the limit $\gamma^\infty$ 
of $ \gamma^k x$ when $k \f \pm \infty$).

The following remark is also well known; its proof is trivial (and left to the reader) if one notices that, by Lemma \ref{discret1}, every discrete
subgroup of the group $\text{Isom} (X,d)$ of isometries of $(X,d )$ acts properly on $(X,d )$.

\begin{remark}\label{kpointsfixes}
On a $\delta$-hyperbolic space $(X,d )$, if $\Gamma$ is a discrete subgroup of ${\rm Isom} (X,d)$, then
\begin{itemize}
\item[(i)] an element of $\Gamma^*$ is elliptic if and only if it has torsion; if $\Gamma$ is torsion-free, every $\g \in \Gamma^*$ is either hyperbolic or parabolic;
\item[(ii)] for every $\g \in \Gamma^*$ and every $k \in \Z^*$ such that $\g^k \ne id_X$, $\g$ is hyperbolic (resp. parabolic, resp. elliptic) if and only if $\g^k$ is 
hyperbolic (resp. parabolic, resp. elliptic); moreover, in the cases where $\g $ is hyperbolic or parabolic, then $\g^k$ and $\g$ have the same set of fixed points.
\end{itemize}
\end{remark}

\begin{defis}\label{deplacements}
To each non trivial isometry $\g$ of a $\delta$-hyperbolic space $(X, d)$, one associates:
\begin{itemize}
\item its  \emph{asymptotic displacement} $\ell(\g)$, {\sl i.e.} the limit\footnote{By sub-additivity, this limit exists and, by 
the triangle inequality, it does not depend on the point $x$, see for example \cite{CDP}, 
Proposition 10.6.1 page 118).} (when $k \f + \infty$) of $\frac{1}{k} \  d (x, \g^k x) $,

\item its \emph{minimal displacement} $s(\g) :=\inf_{x \in X} d(x, \g \,x)$.
\end{itemize}
\end{defis}

Notice that $\ell(\g^k ) = |k| \, \ell(\g)$ for every $ k \in \Z$.

The following Lemma is classical (see for instance \cite{CDP}, Proposition 10.6.3, p. 118): 
\begin{lemma}\label{ellpositive}
On a $\delta$-hyperbolic space $(X,d )$, an isometry $\g$ is hyperbolic if and only if $\ell(\gamma) > 0$.
\end{lemma}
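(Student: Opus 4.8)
The plan is to read the definition of a hyperbolic isometry concretely, as the statement that the orbit map $k \mapsto \gamma^k x$ is a quasi-isometric embedding of $\Z$ into $X$ (equivalently, that the orbit is a quasigeodesic line), and to prove both implications directly from the subadditivity of the displacement sequence already invoked in Definitions \ref{deplacements}. Fix a basepoint $x$ and set $a_k := d(x, \gamma^k x)$. Since $\gamma$ is an isometry one has $d(\gamma^n x, \gamma^m x) = a_{|n-m|}$ and $a_{-k} = a_k$, while the triangle inequality gives $a_{n+m} \le a_n + a_m$; thus $(a_k)_{k \ge 0}$ is subadditive with $a_0 = 0$. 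By Fekete's subadditive lemma the limit defining the asymptotic displacement coincides with the infimum, so
$$\ell(\gamma) = \lim_{k \to +\infty} \frac{a_k}{k} = \inf_{k \ge 1} \frac{a_k}{k}, \qquad \text{hence} \qquad a_k \ge \ell(\gamma)\, k \quad (k \ge 1).$$

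First I would treat the implication $\ell(\gamma) > 0 \Rightarrow \gamma$ hyperbolic. Combining the lower bound $a_{|n-m|} \ge \ell(\gamma)\,|n-m|$ with the trivial upper bound $a_{|n-m|} \le a_1\,|n-m|$ yields, for all $n, m \in \Z$,
$$\ell(\gamma)\,|n-m| \le d(\gamma^n x, \gamma^m x) \le a_1\,|n-m|,$$
so, when $\ell(\gamma) > 0$, the orbit map is a bi-Lipschitz, in particular quasi-isometric, embedding of $\Z$; by definition $\gamma$ is hyperbolic. Conversely, if $\gamma$ is hyperbolic then the orbit map is a quasi-isometric embedding, so there are constants $\lambda \ge 1$ and $C \ge 0$ with $a_k \ge \lambda^{-1} k - C$; dividing by $k$ and letting $k \to +\infty$ gives $\ell(\gamma) \ge \lambda^{-1} > 0$. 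The two implications together prove the equivalence.

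I do not expect a genuine obstacle here, since the argument is elementary once the displacement sequence is put in this form; the two points to handle with care are (i) invoking $\lim = \inf$ for subadditive sequences, which is what turns the positivity of $\ell(\gamma)$ into a linear lower bound valid for every $k$ rather than only asymptotically, and (ii) reading hyperbolicity of $\gamma$ as the orbit map being a quasi-isometric embedding (a quasigeodesic), as in \cite{CDP}. I note that this last reading is precisely what lets one bypass the only delicate alternative route, namely arguing through the trichotomy of Theorem \ref{ellparahyp} and proving separately that elliptic and parabolic isometries have zero asymptotic displacement: the elliptic case is immediate because the orbit is bounded, but showing $\ell(\gamma) = 0$ for a parabolic isometry directly would be the real difficulty, and the displacement-sequence argument above avoids it entirely.
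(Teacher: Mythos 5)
Your proof is correct. Note that the paper does not actually prove Lemma \ref{ellpositive}: it is stated as classical with a pointer to \cite{CDP}, Proposition 10.6.3, so there is no in-text argument to compare against; your subadditivity argument is a clean, self-contained substitute. Fekete's lemma turns $\ell(\g)>0$ into the uniform linear lower bound $d(\g^n x,\g^m x)=a_{|n-m|}\ge \ell(\g)\,|n-m|$, which together with the trivial upper bound $a_1|n-m|$ exhibits the orbit map as a quasi-isometric embedding (and $a_1\ge\ell(\g)>0$, so the Lipschitz constants are nondegenerate); the converse is immediate from the lower quasi-isometry inequality after dividing by $k$. Two remarks. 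First, your reading of ``hyperbolic'' as ``the orbit map $k\mapsto\g^k x$ is a quasi-isometric embedding'' is the correct interpretation of the paper's definition (the phrase ``quasi-isometry from $\Z$ to $X$'' there cannot mean a genuine quasi-isometry onto $X$, and \cite{CDP} reads it the same way). Second, your argument never uses the $\delta$-hyperbolicity of $(X,d)$: with this definition the equivalence holds in an arbitrary metric space, and you are right that hyperbolicity would only enter through the alternative route via the trichotomy of Theorem \ref{ellparahyp}, where one would have to prove separately that parabolic isometries have vanishing asymptotic displacement --- a genuinely more delicate point that your direct argument bypasses entirely.
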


The following lemma is proved in \cite{BCGS}, Lemma 8.23 (i):

\begin{lemma}\label{quasigeod}
A non trivial isometry $\g$ of a $\delta$-hyperbolic space $(X, d)$ verifies $\ell(\g ) \le s(\g ) \le \ell(\g ) + \delta$,
\end{lemma}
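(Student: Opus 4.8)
The lower bound is elementary, so I would dispose of it first. For any $x\in X$ and any $k\in\N^*$, the triangle inequality together with the fact that $\g$ is an isometry gives $d(x,\g^k x)\le\sum_{i=0}^{k-1}d(\g^i x,\g^{i+1}x)=k\,d(x,\g x)$. Dividing by $k$ and letting $k\to+\infty$ yields $\ell(\g)\le d(x,\g x)$; taking the infimum over $x$ gives $\ell(\g)\le s(\g)$. This step uses only subadditivity of $k\mapsto d(x,\g^k x)$ and is valid for every isometry.

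For the upper bound it suffices, since $s(\g)$ is an infimum, to produce for every $\e>0$ a point moved by at most $\ell(\g)+\delta+\e$. If $\g$ is elliptic or parabolic then $\ell(\g)=0$ by Lemma \ref{ellpositive}, and one checks directly that $s(\g)=0$ (a parabolic displaces points approaching its fixed point $\g^\infty$ by arbitrarily small amounts, and a bounded-orbit isometry of a proper space admits an almost-fixed point), so the inequality is trivial. The substantive case is $\g$ hyperbolic, so I would assume $\ell(\g)>0$ and let $\g^-,\g^+$ be its endpoints. The plan is to realise the bound on a quasi-axis: using properness and Arzel\`a--Ascoli, the geodesics $[\g^{-n}x,\g^n x]$ subconverge to a geodesic line $c\in{\cal G}(\g)$ joining $\g^-$ to $\g^+$. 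Since $\g$ fixes $\g^\pm$, the translated line $\g\circ c$ again belongs to ${\cal G}(\g)$, hence $\g\circ c\subset M(\g)$ stays within $2\delta$ of $c$ (Proposition 8.10(i) of \cite{BCGS}, as used in the proof of Lemma \ref{distgeod}).

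It then remains to bound the displacement of a point $p=c(t)$ on the axis. Writing $\bar p$ for a projection of $\g p$ onto $c$, one has $d(p,\bar p)\to\ell(\g)$ along the parametrisation (the shift of $c$ under $\g$ being, asymptotically, the translation length), while the almost-equality Lemma \ref{projection} controls $d(p,\g p)$ in terms of $d(p,\bar p)+d(\bar p,\g p)$ up to $2\delta$. The core of the proof is to carry out this estimate so that the total error collapses to a single $\delta$ rather than the $2\delta$ produced by the crude fellow-travelling bound; this is exactly where the thinness inequality of Lemma \ref{proprietes}(i), as opposed to the coarser $\delta$-slimness, and the sharp form of Lemma \ref{projection} must be combined. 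I expect this sharpening to be the main obstacle: controlling the displacement of the midpoints $m_n$ of $[\g^{-n}x,\g^n x]$ (whose limit is the point $c(0)$ of the axis) is delicate because the naive comparison of $m_n$ with its $\g$-translate loses the full one-step displacement $d(x,\g x)$ — the relevant geodesics $[\g^{-n}x,\g^n x]$ and $[\g^{-n+1}x,\g^{n+1}x]$ fellow-travel only in their central region while their endpoints are $d(x,\g x)$-apart, so the midpoints are close for a reason invisible to the triangle inequality or to the quasi-convexity Lemma \ref{convexity}. The remaining steps are routine triangle-inequality bookkeeping.
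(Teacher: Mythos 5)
The paper does not actually prove Lemma \ref{quasigeod}: it is imported from \cite{BCGS}, Lemma 8.23 (i), so there is no internal proof to compare yours with, and I can only judge the proposal on its own terms. Your subadditivity argument for $\ell(\g)\le s(\g)$ is complete and correct. The upper bound, however, is not proved, for three distinct reasons. (1) The elliptic case is not trivial, and the assertion you use to dismiss it --- that a bounded-orbit isometry of a proper space admits almost-fixed points --- is false: the antipodal map of a circle of circumference $L$ (a proper geodesic space whose triangles are $\delta$-thin in the sense of Definition \ref{hypdefinition0} exactly for $\delta\ge L/2$) moves every point by exactly $L/2=\delta$, so that $s(\g)=\ell(\g)+\delta$ with no almost-fixed point in sight; the inequality is sharp precisely on this case you declared trivial. (2) For parabolic isometries, the claim that the displacement tends to $0$ along orbits approaching $\g^\infty$ is a feature of $\mathbf H^n$ that you do not justify for a general $\delta$-hyperbolic space; what must be proved there is only $s(\g)\le\delta$, and it requires an argument. (3) In the hyperbolic case you explicitly flag the decisive quantitative step --- collapsing the error to a single $\delta$ --- as ``the main obstacle'' and leave it unresolved; a proof whose conclusion is that its key estimate remains to be found is a plan, not a proof.

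On point (3), the obstacle is intrinsic to the route you chose. Comparing the quasi-axis $c$ with its translate $\g\circ c$ already costs $2\delta$, projecting $\g p$ back onto $c$ via Lemma \ref{projection} costs another multiple of $\delta$, and identifying the resulting shift with $\ell(\g)$ rather than $\ell(\g)+O(\delta)$ costs yet more, because the parameters of the projections of the points $\g^k p$ are only almost-arithmetic; these losses are independent, so this scheme can only yield $s(\g)\le\ell(\g)+C\delta$ with $C$ strictly larger than $1$. A bound with a single $\delta$ comes from applying the tripod approximation of Lemma \ref{proprietes} (i) exactly once: for any $x$, the midpoint $m$ of a geodesic $[x,\g x]$ and its image $\g m\in\g\big([x,\g x]\big)$ are equidistant from the common vertex $\g x$ of the triangle $(x,\g x,\g^2x)$, so their images in the approximating tripod are at distance $\big(d(x,\g^2x)-d(x,\g x)\big)^{+}$ and hence $s(\g)\le d(m,\g m)\le\big(d(x,\g^2x)-d(x,\g x)\big)^{+}+\delta$. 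This one inequality treats all three types of isometries uniformly; the remaining ($\delta$-free, but not routine) work is to exhibit points $x$ at which the increment $d(x,\g^2x)-d(x,\g x)$ is arbitrarily close to $\ell(\g)$. Your proposal contains neither this mechanism nor a substitute for it.
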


The following lemma is proved in \cite{BCGS}, Lemma 8.27:
\begin{lemma}\label{geodmin}
For every hyperbolic isometry $\gamma$, for every $ x \in M_{\rm{min}} (\g)$ and any choice of a geodesic segment $ \left[  x , \g  x \right] $ from $x$ to $\g x$, the union $\cup_{p \in \Z}\g^p 
\left(\left[  x , \g \, x \right] \right)$ is a $\g$-invariant local geodesic included in $M_{\rm{min}} (\g)$.
\end{lemma}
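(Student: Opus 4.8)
The plan is to verify the three assertions---$\gamma$-invariance, inclusion in $M_{\rm{min}}(\gamma)$, and the local-geodesic property---by repeated use of the triangle inequality, relying on hyperbolicity only through the fact (recalled in Definitions \ref{faisceau}) that $M_{\rm{min}}(\gamma)$ is nonempty, so that the hypothesis $x\in M_{\rm{min}}(\gamma)$ makes sense and $d(x,\gamma x)=s(\gamma)$. The $\gamma$-invariance of the union $\bigcup_{p\in\Z}\gamma^p([x,\gamma x])$ is immediate, since applying $\gamma$ sends $\gamma^p([x,\gamma x])$ to $\gamma^{p+1}([x,\gamma x])$ and only shifts the index. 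I will also record the elementary remark that $M_{\rm{min}}(\gamma)$ is itself $\gamma$-invariant: if $d(y,\gamma y)=s(\gamma)$ then, $\gamma$ being an isometry, $d(\gamma y,\gamma^2 y)=d(y,\gamma y)=s(\gamma)$ as well.

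The first genuine step is to show that the segment $[x,\gamma x]$ lies in $M_{\rm{min}}(\gamma)$. Writing $t\mapsto x_t$ for the natural parametrization of $[x,\gamma x]$ (Definition \ref{naturalparameter}), I pass through the vertex $\gamma x$: since $\gamma$ is an isometry, $d(\gamma x,\gamma x_t)=d(x,x_t)=t\,s(\gamma)$, and hence $d(x_t,\gamma x_t)\le d(x_t,\gamma x)+d(\gamma x,\gamma x_t)=(1-t)\,s(\gamma)+t\,s(\gamma)=s(\gamma)$. As $s(\gamma)$ is the infimum of the displacement function, this forces $d(x_t,\gamma x_t)=s(\gamma)$, i.e. $x_t\in M_{\rm{min}}(\gamma)$. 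Combining this with the $\gamma$-invariance of $M_{\rm{min}}(\gamma)$, every $\gamma^p([x,\gamma x])$ is contained in $M_{\rm{min}}(\gamma)$, and therefore so is the whole union.

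It remains to prove that the concatenation $c$, parametrized by arc length with $c(p\,s(\gamma))=\gamma^p x$ and $\gamma\circ c(t)=c(t+s(\gamma))$, is a local geodesic in the sense of Definitions \ref{geodesicspace}. Inside each segment $\gamma^p([x,\gamma x])$ this is automatic. The only delicate points are the junctions $\gamma^p x$, where a priori the broken path could turn a \emph{corner}; by $\gamma$-equivariance it suffices to treat the junction at $x$. I will take a backward point $a$ on $[\gamma^{-1}x,x]$ at distance $r$ from $x$ and a forward point $b$ on $[x,\gamma x]$ at distance $r'$ from $x$, with $r+r'<s(\gamma)$, and show $d(a,b)=r+r'$. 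The upper bound $d(a,b)\le r+r'$ is the triangle inequality through $x$. For the lower bound, observe that $\gamma([\gamma^{-1}x,x])=[x,\gamma x]$ carries $a$ to the point $\gamma a$ of $[x,\gamma x]$ at distance $s(\gamma)-r$ from $x$; since $r'<s(\gamma)-r$, the point $b$ lies between $x$ and $\gamma a$, so $d(b,\gamma a)=s(\gamma)-r-r'$. As $a\in M_{\rm{min}}(\gamma)$ (by the previous step), we have $d(a,\gamma a)=s(\gamma)$, and the triangle inequality $s(\gamma)=d(a,\gamma a)\le d(a,b)+d(b,\gamma a)$ yields $d(a,b)\ge r+r'$. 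Hence $d(a,b)=r+r'$, and taking $\epsilon=s(\gamma)/2$ shows $c$ is a local geodesic at $x$, thus (by $\gamma$-equivariance) everywhere.

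I expect the corner-at-the-junction estimate to be the only real obstacle: the entire content of the lemma is that this broken geodesic does not actually break, and the mechanism that prevents it is precisely that $a$ realizes the minimal displacement, $d(a,\gamma a)=s(\gamma)$. Everything else reduces to bookkeeping with the triangle inequality and the natural parametrization.
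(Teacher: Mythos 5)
Your proof is correct. The paper does not actually prove this lemma --- it is quoted from Lemma 8.27 of \cite{BCGS} --- and your argument is the standard one for that result: first show $[x,\gamma x]\subset M_{\rm min}(\gamma)$ by routing the displacement of $x_t$ through the vertex $\gamma x$, then rule out a corner at the junctions by playing the two-sided triangle inequality against the minimality $d(a,\gamma a)=s(\gamma)$; note only that the choice $\e=s(\gamma)/2$ implicitly uses $s(\gamma)\ge\ell(\gamma)>0$, which holds precisely because $\gamma$ is hyperbolic (Lemmas \ref{ellpositive} and \ref{quasigeod}).
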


\subsection{Elementary subgroups}\label{nilpotents}

In this section we consider \emph{discrete} subgroups $G$ of the isometry group of a Gromov-hyperbolic space $X$. A classification of these groups has been 
sketched by M. Gromov \cite{Gr4} (see also \cite{SUD}, \cite{CCMT}), in terms of their limit set $LG$ ({\sl i.e.} the set of accumulation points of any orbit of the 
action of $G$ on $X$); he classified these groups in the following classes\footnote{Here we only consider groups acting discretely; excluding for instance 
\emph{focal} groups whose action is not discrete.}:

\begin{itemize}
\item {\em elliptic} groups (also said {\em bounded}): finite groups all of whose orbits are bounded;
\item {\em parabolic} (or, according to the original terminology, {\em horocyclic}) groups: infinite groups $G$ such that $\# (LG) = 1$. A parabolic 
group thus only contains parabolic or elliptic elements\footnote{Indeed, if $G$ contains an hyperbolic isometry $g$, its fixed points $g^+$ and $g^-$ are 
accumulation points of the sequence $\left( g^k \, x\right)_{k \in \Z}$, they thus belong to $LG$.}.
\item \emph{lineal} groups: infinite groups $G$ such that $\# (LG) = 2$. A lineal group only contains hyperbolic or elliptic elements (for a proof, see
for instance \cite{C} section 3.4.2).
\item groups of {\em general type}: groups $G$ such that $\# (LG) \ge 3$ (in this case $LG$ is infinite); this is equivalent to say that $G$ contains (at least) two
hyperbolic elements whose fixed points sets are disjoint\footnote{If $G$ contains two hyperbolic elements $ g_1$ and $ g_2$ whose sets of fixed points 
$\text{Fix}(g_1)$ and $\text{Fix}(g_2)$ are disjoint, a trivial consequence is that $\# (LG) > 2$, since $\# (LG)$  contains $\text{Fix}(g_1) \cup \text{Fix}(g_2)$
the converse implication is proved for instance in \cite{C}, Lemma 3.7. The fact that, if $ \# (LG)>2$, then $ LG$ is infinite and uncountable is announced in 
\cite{Gr4},  section 3.5, Theorem p.194; one can find a complete proof in \cite{SUD}, Proposition 6.2.14.}.
\end{itemize}

In the three first cases, the action of $G$ on $X$ is said to be {\em elementary}. By extension, an hyperbolic group will be said to be \emph{elementary} if the
action of $G$ (by left translations) on $G$ (endowed with the algebraic word-metric) is elementary; as, for this action, $LG$ coincides with the ideal boundary 
$\partial G$, an hyperbolic group is \emph{elementary} iff $\# (\partial G) \le 2$.\\
Notice that, if the action of $G$ is elementary, the set $\text{Fix}(g)$ of fixed points of any non elliptic element $ g \in G$ coincides\footnote{Indeed, for any parabolic 
(resp. lineal) group $G$, for every parabolic (resp. hyperbolic) isometry $g \in G$, the fixed points of $g$ are the accumulation points of the sequence
$\left( g^k \,x\right)_{k \in \Z}$, thus $\text{Fix}(g) \subset LG$; furthermore, as $\# \big( \text{Fix}(g)\big) = \# (LG)$, then $\text{Fix}(g) =  LG$.} 
with the limit set $LG$ of $G$.

\medskip
In the three following Propositions, we shall recall basic properties of elementary groups, most of them being immediate corollaries
of the above classification. For proofs or references, see for instance \cite{BCGS}

\begin{prop}\label{actionelementaire} \emph{(Elementary actions)}
Let $ G$ be a discrete subgroup of the isometry group of any Gromov-hyperbolic space $X$, then:
\begin{itemize}
\item[(i)] if $ \g_1, \g_2 \in G $ are two hyperbolic elements with a common fixed point, then they have the same pair of fixed points, {\sl i.e.} $ \{ \g_1^-, \g_1^+ \} 
= \{ \g_2^-, \g_2^+ \} $;

\item[(ii)] if $\g \in G$ is an hyperbolic isometry, the subgroup $G_{\g} = \left\{ g \in G \, : \, g\big( \{ \g^-, \g^+ \} \big) = \{ \g^-, \g^+ \}\right\}$ is the 
maximal subgroup among all virtually cyclic subgroups of $G$ which contain $\g$; if moreover $G$ is torsion-free, then $G_{\g} = \left\{ g \in G \, : \, 
g ( \g^-) =  \g^- \text{ and } g ( \g^+) =  \g^+\right\}$;

\item[(iii)] if $G$ est amenable (\sl{e.g.} virtually nilpotent), then the action of $G$ is elementary; 

\item[(iv)] if $G$ is virtually nilpotent, its non elliptic elements are either all parabolic or all hyperbolic and all have the same set of fixed points.

\item[(v)] if $\g \in G$ is an hyperbolic isometry, for every $g\in G$, the subgroup generated by
$\g$ and $g \g g^{-1}$ is virtually cyclic if and only if the subgroup generated by $\g$ and $g $ is virtually cyclic;

\item[(vi)] if $a$ and $b$ are hyperbolic isometries and if $ \langle a , b  \rangle$ is not virtually cyclic then, for every
$p , q \in \Z^*$, $  a^p$ and $ b^q  $ are hyperbolic and $ \langle a^p, b^q  \rangle$ is not virtually cyclic;

\item[(vii)] if $\g \in G$ is an hyperbolic isometry, any subset $S$ of $G$ such that $ \langle \g, g\rangle$ is 
virtually cyclic for every $g\in S$ generates a virtually cyclic group.
\end{itemize}
\end{prop}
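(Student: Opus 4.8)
The plan is to reduce everything to Gromov's classification of discrete groups by the cardinality of their limit set $LG$, recalled just before the statement, together with two genuinely geometric facts, items (i) and (ii), which form the core; parts (iii)--(vii) will then follow formally. I would prove (i) first. Assume $\g_1,\g_2$ are hyperbolic with, say, $\g_1^+=\g_2^+=\xi$, and suppose for contradiction that $\g_1^-\neq\g_2^-$. The idea is that a suitable product of $\g_1$ and $\g_2$ (a commutator-type combination, exactly as $z\mapsto 2z$ and $z\mapsto 2z+1$ produce the parabolic $z\mapsto z+1$ in the upper half-plane) is a \emph{parabolic} isometry $p$ fixing $\xi$. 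Since $\xi$ is the attracting fixed point of $\g_1$, the conjugates $\g_1^{-n}\,p\,\g_1^{\,n}$, for $n\in\N$, are then pairwise distinct and displace a fixed basepoint by amounts tending to $0$, contradicting the properness (discreteness) of the action. Hence the repelling points coincide and $\{\g_1^-,\g_1^+\}=\{\g_2^-,\g_2^+\}$.

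For (ii), note that $G_\g$ is the stabilizer in $G$ of the unordered pair $\{\g^-,\g^+\}$. Because the action is discrete, $G_\g$ acts properly on the quasi-line $M(\g)$, which is quasi-isometric to $\R$, so $G_\g$ is virtually cyclic and evidently contains $\g$. Conversely, any virtually cyclic subgroup $V$ with $\g\in V$ is itself quasi-isometric to $\R$, so $\#LV=2$ and $LV=\{\g^-,\g^+\}$; every element of $V$ preserves $LV$, whence $V\subseteq G_\g$. This gives maximality. In the torsion-free case, an element exchanging $\g^-$ and $\g^+$ would square into the index-two pointwise stabilizer and would itself be a nontrivial torsion element; as none exists, $G_\g$ reduces to $\{g\in G: g(\g^-)=\g^-,\ g(\g^+)=\g^+\}$.

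The remaining parts are short deductions. For (iii), a group of general type contains two hyperbolic isometries with disjoint fixed-point sets; by the standard ping-pong argument, sufficiently high powers of them generate a free group of rank two, which is non-amenable, so an amenable (in particular virtually nilpotent) $G$ cannot be of general type, and its action is elementary. For (iv), combine (iii) with the classification: if $G$ is parabolic, all its non-elliptic elements are parabolic with the single fixed point $LG$; if $G$ is lineal, all its non-elliptic elements are hyperbolic, each with $\mathrm{Fix}=LG$ as observed before the statement. Parts (v)--(vii) all rest on the principle, extracted from (i) and (ii), that any virtually cyclic subgroup containing the hyperbolic element $\g$ lies inside $G_\g$. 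For (v) one inclusion is immediate since $g\g g^{-1}\in\langle\g,g\rangle$; conversely, virtual cyclicity of $\langle\g,g\g g^{-1}\rangle$ forces $g\g g^{-1}$ to share both endpoints with $\g$ by (i), i.e. $g\in G_\g$, so $\langle\g,g\rangle\subseteq G_\g$ is virtually cyclic by (ii). For (vi), Remark \ref{kpointsfixes} shows $a^p,b^q$ are hyperbolic with the same fixed points as $a,b$; these pairs are distinct (otherwise $\langle a,b\rangle\subseteq G_a$ would be virtually cyclic), so by (i) the group $\langle a^p,b^q\rangle$ cannot be virtually cyclic. For (vii), each $g\in S$ lies in $G_\g$ by the principle above, hence $\langle S\rangle\subseteq G_\g$ is virtually cyclic.

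The main obstacle is the geometric input (i): the dichotomy ``disjoint or identical fixed-point sets'' for hyperbolic elements of a discrete group. Making the parabolic-production and shrinking-conjugation argument rigorous in a general $\delta$-hyperbolic space (rather than in $\mathbf{H}^n$) requires quantitative control of translation lengths and of the thinness of the relevant quadrilaterals; it is this lemma, together with (ii), that genuinely uses $\delta$-hyperbolicity and discreteness, whereas (iii)--(vii) are purely group-theoretic bookkeeping built on top of them.
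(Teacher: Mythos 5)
First, a remark on the comparison you were asked for: the paper does not actually prove this proposition. It is introduced with the sentence that these are ``basic properties of elementary groups, most of them being immediate corollaries of the above classification'', and the proofs are deferred to \cite{BCGS}. So there is no in-text argument to match yours against. Judged on its own, your architecture is the right one: (iii)--(vii) do reduce, exactly as you say, to the classification of discrete groups by the cardinality of the limit set together with the geometric inputs (i) and (ii), and your deductions of (iii)--(vii) are correct. (For (v) and (vi), the ``common fixed point'' you need in order to invoke (i) comes from the observation that the limit set of a virtually cyclic group containing a hyperbolic element is exactly its pair of fixed points -- which is really the mechanism of your proof of (ii) rather than of (i) -- but this is cosmetic.)

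The genuine gap is in (i), and you half-acknowledge it yourself. The claim that ``a suitable product of $\gamma_1$ and $\gamma_2$ is a parabolic isometry fixing $\xi$'' cannot be established in a general $\delta$-hyperbolic space: the model computation in the upper half-plane works because the two translation lengths can be matched exactly, whereas in general the candidate product or commutator is only guaranteed to have zero asymptotic displacement, hence to be parabolic \emph{or elliptic}; in the elliptic case (which genuinely occurs -- for instance for groups acting on trees, where no isometry is parabolic) your argument stops. Moreover the ``shrinking conjugation'' step, $d(x,\gamma_1^{-n}p\,\gamma_1^{n}x)\to 0$, is a feature of $\mathbf{H}^n$ and not of general $\delta$-hyperbolic spaces, where the displacement of a parabolic need not tend to $0$ along any sequence. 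The standard proof bypasses parabolics entirely: set $k_n:=\gamma_1^{-n}\gamma_2\gamma_1^{n}$. Since the axes of $\gamma_1$ and $\gamma_2$ are asymptotic at the common point $\xi$, the points $\gamma_1^{n}x$ (for $x$ on the axis of $\gamma_1$) eventually lie within a bounded distance of the axis of $\gamma_2$, so $d(x,k_nx)=d(\gamma_1^{n}x,\gamma_2\gamma_1^{n}x)$ stays bounded; and the $k_n$ are pairwise distinct, because $k_n=k_m$ would force $\gamma_2$ to commute with a power of $\gamma_1$, hence to preserve $\{\gamma_1^-,\gamma_1^+\}$, hence (as it already fixes $\xi=\gamma_1^+$) to fix $\gamma_1^-$, giving $\gamma_1^-=\gamma_2^-$. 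Infinitely many distinct elements with uniformly bounded displacement of $x$ contradict properness. A smaller slip occurs in the torsion-free half of (ii): an element $g$ exchanging $\gamma^-$ and $\gamma^+$ is not thereby a torsion element just because $g^2$ fixes both points. The correct argument is that $G_\gamma$, being virtually cyclic and torsion-free, is infinite cyclic by Remark \ref{virtuelcyclique}, hence abelian, so $g\gamma g^{-1}=\gamma$, while $g\gamma g^{-1}$ has attracting point $g(\gamma^+)=\gamma^-$ -- a contradiction.
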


In the co-compact case, one has the following classical results:

\begin{prop}[see \cite{GH}, \cite{CDP}, \cite{BH}]\label{actioncocompacte} 
 Let $G$ be a discrete group of isometries acting co-compactly on a  Gromov-hyperbolic space $X$, then
\begin{itemize}
\item[(i)] $G$ is a finitely generated Gromov-hyperbolic group;
\item[(ii)] $G$ does not contain any parabolic isometry;
\item[(iii)] $G$ is elementary if and only if the space $X$ is elementary;
\item[(iv)] $G$ is elementary if and only if it is virtually cyclic;
\item[(v)] every virtually nilpotent subgroup of $G$ is virtually cyclic.
\end{itemize}
\end{prop}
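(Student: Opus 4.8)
The plan is to establish the five assertions in turn, reducing each to the quasi-isometric rigidity of hyperbolicity and to the classification of elementary groups recalled just above. For assertion (i) I would invoke the \v{S}varc--Milnor principle in the concrete form already present in this paper. Since $X$ is proper and geodesic and the action is proper (Proposition \ref{discret1}) and cocompact with $\diam(G\backslash X)\le D$, Proposition \ref{generatorsdiameter} shows that $\Sigma_{2D}(x)$ is a finite symmetric generating set, and the estimates of Lemmas \ref{Gromovgeneral} and \ref{quasiisom} show that the orbit map $g\mapsto g\,x$ is a quasi-isometry from $(G,d_\Sigma)$ onto $X$ (the possible presence of fixed points affecting only the continuous interpolation, exactly as in the proof of Proposition \ref{hyphyp}). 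Hyperbolicity then transfers from $X$ to $G$ by Proposition \ref{quasiisometric}, so $G$ is a finitely generated Gromov-hyperbolic group.

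For (ii), suppose for contradiction that some $g\in G^*$ is parabolic. By Remark \ref{kpointsfixes}(i) the element $g$ has infinite order, and by Lemma \ref{ellpositive} one has $\ell(g)=0$. The key external fact is that in a hyperbolic group every infinite-order element is undistorted, so $|g^k|_\Sigma\ge c\,|k|$ for some $c>0$; combined with the lower quasi-isometry bound of Lemma \ref{Gromovgeneral} this forces $d(x,g^k x)\ge c'|k|-c''$, whence $\ell(g)>0$, a contradiction. For (iii) the decisive observation is that for a cocompact action the orbit $G\,x$ is quasi-dense, so the limit set $LG$ (the accumulation set of $G\,x$ on $\partial X$) is all of $\partial X$; therefore $\#(LG)=\#(\partial X)$, and $G$ is elementary ($\#(LG)\le 2$) precisely when $X$ is elementary ($\#(\partial X)\le 2$).

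For (iv) and (v), which I expect to be the substantive part, the converse implication (virtually cyclic $\Rightarrow$ elementary) is immediate, since a virtually cyclic group has at most two limit points. For the direct implication in (iv), by (iii) being elementary means $\#(LG)\le 2$: the bounded case forces $X$ compact and $G$ finite by properness, while by (ii) the case $\#(LG)=1$ cannot occur for infinite $G$, because an infinite-order element would be hyperbolic and would contribute two boundary points. In the remaining lineal case $\#(LG)=2$, the group $G$ stabilises a pair $\{a,b\}\subset\partial X$ setwise; passing to the index-$\le 2$ subgroup $G_0$ fixing $a$ and $b$, the signed translation along a quasi-axis from $a$ to $b$ defines a homomorphism $G_0\to\R$ whose kernel is finite (by properness) and whose image is discrete, hence infinite cyclic or trivial, so $G_0$, and therefore $G$, is virtually cyclic. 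For (v), let $H\le G$ be virtually nilpotent; by Proposition \ref{actionelementaire}(iii) its action is elementary, and by Proposition \ref{actionelementaire}(iv) its non-elliptic elements are all hyperbolic with one common fixed-point pair $\{a,b\}$ (parabolics being excluded by (ii)). Thus $H$ again stabilises $\{a,b\}$ and acts properly, so the same translation-homomorphism argument --- which crucially does \emph{not} require cocompactness of $H$ --- shows that $H$ is virtually cyclic. The main obstacle is exactly this last structural step: proving that a discrete, proper isometry group preserving a pair of boundary points of a $\delta$-hyperbolic space is virtually cyclic. Making it rigorous requires constructing the translation homomorphism along a quasi-axis and controlling both its finite kernel and its discrete image, which is where the $\delta$-hyperbolic geometry (the convexity of quasi-axes, Lemma \ref{convexity}) and the properness of the action must be combined with some care.
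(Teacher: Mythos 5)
The paper does not actually prove Proposition \ref{actioncocompacte}: it is stated as a classical result with a pointer to \cite{GH}, \cite{CDP}, \cite{BH}, and no argument appears in the text. Your proposal therefore does strictly more than the paper, and the route you take is the standard one through those references: \v{S}varc--Milnor for (i) (the same mechanism the paper re-implements quantitatively in Proposition \ref{hyphyp} via Lemmas \ref{Gromovgeneral} and \ref{quasiisom} and Proposition \ref{quasiisometric}), quasi-density of a cocompact orbit for (iii), and the stabiliser-of-a-boundary-pair / translation-homomorphism analysis for (iv) and (v). As a roadmap this is sound, and you correctly isolate the one genuinely structural step, namely that a proper discrete group preserving a pair of boundary points is virtually cyclic.

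Three places in your sketch still lean on further classical theorems that are not elsewhere in this paper and would have to be cited or proved. First, in (ii) the ``key external fact'' that infinite-order elements of a hyperbolic group are undistorted (positivity of stable translation numbers, see \cite{BH}, chapter III.$\Gamma$) is a theorem of comparable depth to the no-parabolics statement itself; it is logically independent of it, so your argument is not circular, but it is doing all the work. Second, in (iv) the exclusion of the case $\#(LG)=1$ tacitly presupposes that an infinite hyperbolic group contains an element of infinite order; without that, an infinite torsion group could a priori be horocyclic. Third, and most concretely, in (v) your argument only covers the case where the virtually nilpotent subgroup $H$ has a non-elliptic element: if $H$ is a torsion group, Proposition \ref{actionelementaire}(iv) produces no fixed pair to stabilise and the translation homomorphism does not exist, so one must instead invoke the finiteness of torsion (equivalently, locally finite) subgroups of hyperbolic groups --- e.g.\ via the uniform bound on the orders of finite subgroups --- to conclude that such an $H$ is finite, hence virtually cyclic. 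All three facts are available in the references the paper cites, so the proposal is faithful to the paper's intent, but as a self-contained proof it is incomplete at exactly these points.
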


A corollary of these results is the following elementary lemma:

\begin{lemma}\label{maximalcyclic}
For every group $\Gamma$ and every torsion-free $\g \in \Gamma$, if one of the two following properties is satisfied:
\begin{itemize}
\item[(i)] there exists a proper isometric action of $\Gamma$ on some Gromov-hyperbolic space $(X, d_X)$ such that $\g$ acts as a non-parabolic 
isometry, or
\item[(ii)] there exists a proper co-compact isometric action of $\Gamma$ on some Gromov-hyperbolic space $(X, d_X)$ ,
\end{itemize}
then $\g$ is contained in a unique maximal virtually cyclic subgroup of $\Gamma$ and acts on $(X,d)$ as an hyperbolic isometry.
\end{lemma}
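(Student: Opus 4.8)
The plan is to reduce everything to the image of the action inside $\mathrm{Isom}(X,d_X)$, where the classification results of Section \ref{nilpotents} become available, and then transfer the conclusion back to $\Gamma$ through a finite kernel. First I would denote by $\rho : \Gamma \f \mathrm{Isom}(X, d_X)$ the homomorphism defining the action, and set $G := \rho(\Gamma)$ and $K := \Ker \rho$. Since the action is proper, $K$ is contained in $\{\eta \in \Gamma : d(x, \eta x) \le 1\}$, hence is finite; moreover the action of $G$ on $X$ is faithful and still proper (the preimage in $\Gamma$ of $\{h \in G : d(x, hx) \le R\}$ is the finite set $\{\eta \in \Gamma : d(x, \eta x) \le R\}$), so $G$ is a discrete subgroup of $\mathrm{Isom}(X, d_X)$ by Proposition \ref{discret1}. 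Under hypothesis (ii), the identity $G \backslash X = \Gamma \backslash X$ shows that $G$ acts co-compactly as well.

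Next I would prove that $g := \rho(\g)$ is a nontrivial torsion-free element of $G$: if $g^n = e$ for some $n \ge 1$, then $\g^n \in K$, and the finiteness of $K$ would force $\g$ to have finite order, contradicting the hypothesis. Applying Remark \ref{kpointsfixes} (i) to the discrete group $G$, the torsion-free element $g$ cannot be elliptic, so by Theorem \ref{ellparahyp} it is either parabolic or hyperbolic. Under hypothesis (i) the isometry $g$ is non-parabolic by assumption, while under hypothesis (ii) it is non-parabolic by Proposition \ref{actioncocompacte} (ii); in both cases $g$ is therefore hyperbolic, which is exactly the assertion that $\g$ acts as an hyperbolic isometry.

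It then remains to produce the unique maximal virtually cyclic subgroup of $\Gamma$ containing $\g$. Since $g$ is hyperbolic, Proposition \ref{actionelementaire} (ii) furnishes the virtually cyclic subgroup $G_g = \{h \in G : h(\{g^-, g^+\}) = \{g^-, g^+\}\}$, which contains $g$ and which contains every virtually cyclic subgroup of $G$ containing $g$. I would set $W := \rho^{-1}(G_g)$; it is a subgroup of $\Gamma$ containing $\g$, and since $W/K \cong G_g$ is virtually cyclic while $K$ is finite, $W$ is finite-by-virtually-cyclic, hence itself virtually cyclic. Finally, if $V \le \Gamma$ is any virtually cyclic subgroup with $\g \in V$, then $\rho(V)$ is a virtually cyclic subgroup of $G$ containing $g$ (a homomorphic image of a virtually cyclic group is virtually cyclic), so $\rho(V) \subseteq G_g$ by Proposition \ref{actionelementaire} (ii), whence $V \subseteq W$. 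Thus $W$ is the unique maximal virtually cyclic subgroup of $\Gamma$ containing $\g$.

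I expect the only delicate point to be the bookkeeping around the possible non-faithfulness of the action: one must pass to $G = \rho(\Gamma)$ in order to invoke the structure theory for discrete isometry groups (the hypotheses of Remark \ref{kpointsfixes} and of Propositions \ref{actionelementaire} and \ref{actioncocompacte} are stated for discrete subgroups of $\mathrm{Isom}(X,d_X)$), and then carry the maximality statement back across the finite kernel $K$. The two elementary facts that a homomorphic image of a virtually cyclic group is virtually cyclic and that a finite-by-virtually-cyclic group is virtually cyclic are precisely what make the preimage $W$ behave as required.
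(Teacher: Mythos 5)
Your proof is correct, but it is organized quite differently from the paper's. The paper disposes of this lemma in two lines: it observes that under hypothesis (ii) Proposition \ref{actioncocompacte} (ii) rules out parabolic elements, so that (ii) reduces to (i), and then it simply cites the earlier reference \cite{BCGS} for the whole of case (i). You instead give a self-contained derivation of case (i) from the appendix material: you pass to the image $G=\rho(\Gamma)$ and the finite kernel $K$, verify that $G$ is a discrete subgroup of $\mathrm{Isom}(X,d_X)$ via Proposition \ref{discret1}, use Remark \ref{kpointsfixes} (i) and Theorem \ref{ellparahyp} to force $\rho(\g)$ to be hyperbolic, and then transport the maximal virtually cyclic subgroup $G_g$ of Proposition \ref{actionelementaire} (ii) back to $\Gamma$ as $W=\rho^{-1}(G_g)$. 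The reduction of (ii) to non-parabolicity is the same in both arguments; what you add is the actual existence-and-uniqueness argument that the paper outsources. Your version buys self-containedness at the cost of two small auxiliary facts (a quotient of a virtually cyclic group is virtually cyclic, and a finite-by-virtually-cyclic group is virtually cyclic), both of which you state correctly and which are standard; your care in distinguishing $\Gamma$ from its possibly non-faithful image $G$ is exactly the bookkeeping needed to apply the appendix propositions, which are all stated for discrete subgroups of the isometry group. The one statement you lean on without proof that deserves a word is Proposition \ref{actionelementaire} (ii) itself (that $G_g$ is maximal among virtually cyclic subgroups containing $g$), but the paper likewise takes this as a black box, so your proof is at the same level of rigour as the source it replaces.
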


\begin{proof}  A proof of this Lemma can be found in \cite{BCGS} if the hypothesis (i) is assumed. Now, by Proposition \ref{actioncocompacte} (ii), property 
(ii) implies that none of the elements of $\Gamma$ acts on $(X, d_X)$ as a parabolic isometry, hence property (ii) implies property (i), which ends the proof.
\end{proof}

\bibliographystyle{alpha}
\bibliography{bibliography-Margulis-Gromov}

\end{document}